\numberwithin{equation}{section} 
 \newtheorem{thm}{Theorem}[section]
 \newtheorem{lem}[thm]{Lemma}
 \newtheorem{prop}[thm]{Proposition}
\theoremstyle{definition}
 \newtheorem{nota}[thm]{Notation}
\theoremstyle{remark}
 \newtheorem{rem}[thm]{Remark}
 \newtheorem{ex}{Example}
 \numberwithin{equation}{section}
\begin{document}
 \title{On the structure of Hausdorff moment sequences of complex matrices}
 \author{Bernd Fritzsche \and Bernd Kirstein \and Conrad M\"adler}
 \dedication{Dedicated to Daniel Alpay on the occasion of his 60th birthday}

 \maketitle

\begin{abstract}
 The paper treats several aspects of the truncated matricial \tabH{} type moment problems.
 It is shown that each \tabH{} moment sequence has a particular intrinsic structure.
 More precisely, each element of this sequence varies within a closed bounded matricial interval.
 The case that the corresponding moment coincides with one of the endpoints of the interval plays a particular important role.
 This leads to distinguished molecular solutions of the truncated matricial \tabH{} moment problem, which satisfy some extremality properties.
 The proofs are mainly of algebraic character.
 The use of the parallel sum of matrices is an essential tool in the proofs.
\end{abstract}

\begin{description}
 \item[Mathematics Subject Classification (2010):] 44A60, 47A57.
 \item[Keywords:] truncated matricial Hausdorff moment problem, canonical molecular solutions, matricial intervals associated with matricial Hausdorff moment sequences, parallel sum of matrices.
\end{description}

\section{Introduction}
 The starting point of this paper was a question connected to matricial versions of the truncated power moment problem on a compact interval \(\ab\) of the real axis.
 In joint work with A.~E.~Choque~Rivero and Yu.~M.~Dyukarev (see~\zitas{MR2222521,MR2342899}), the first and second authors could extend the characterizations of solvability of this moment problem, which were given in the scalar case by M.~G.~Krein~\zita{MR0044591} (see also Krein/Nudelman~\zitaa{MR0458081}{\cchap{III}}) to the matrix case.
 In the case that \(q\in\N\), \(n\in\N\), and \(\seqs{2n-1}\) is a sequence of complex \tqqa{matrices} for which the moment problem in question is solvable, in their joint paper~\zita{MR2735313} with Yu.~M.~Dyukarev the authors constructed a concrete molecular solution (,\ie{}, a discrete \tnnH{} \tqqa{measure} concentrated on finitely many points of the interval \(\ab\)) of the moment problem.
 The motivation for this paper was to find an explicit molecular solution for the case of a given sequence \(\seqs{2n}\) of prescribed moments.
 A closer look at our method used in~\zita{MR2735313} shows that the realization of our aim can be reached by a thorough study of the structure of finite \tabHnnd{} sequences of complex \tqqa{matrices} (see~\rdefn{D1159}).
 The key information comes from \rthm{T0718}, which says that if \(m\in\N\) and if \(\seqs{m}\) is an \tabHnnd{} sequence, then we can always find a complex \tqqa{matrix} \(\su{m+1}\) such that the sequence \(\seqs{m+1}\) is \tabHnnd{}.
 We are even able to describe all complex \tqqa{matrices} \(\su{m+1}\) which can be chosen to realize this aim.
 More precisely, the set of all these matrices \(\su{m+1}\) turns out to be a closed matricial interval of complex \tqqa{matrices}.
 As well the left and right endpoint as the midpoint of this interval play (as clearly expected) a distinguished role (see \rsec{S0747}).
 The main part of this paper is concerned with the investigation of the structure of \tabHnnd{} sequences of complex \tqqa{matrices} and the study of the above mentioned extension problem for such sequences.
 These results lead us to interesting new insights concerning a whole family of particular molecular solutions of the matrix version of the truncated \tabH{} moment problem.
 In particular, we guess that the choice of the endpoints of the interval exactly leads to those extremal molecular solutions which were studied by M.~G.~Krein~\zita{MR0044591} (see also Krein/Nudelman~\zitaa{MR0458081}{\cchap{III}, \S5}).
 M.~G.~Krein found them via the lower and upper principal representation of the given moment sequence (see~\rsec{S0733}).
 
 A more careful view shows that the situation is in some sense similar as in the case of \tnn{} definite sequences from \(\Cqq\) (see~\zita{MR885621}) or \tpqa{Schur} sequences (see~\zita{MR918682}).
 If \(n\in\N\) and if \(\seq{C_j}{j}{0}{n}\) is a sequence of one of the just mentioned types, then, for each \(m\in\set{1,2,\dotsc,n}\), the matrix \(C_m\) belongs to a matrix ball the parameters of which depend on the sequence \(\seq{C_j}{j}{0}{m-1}\).
 Having in mind the geometry of a matrix ball, we see that there are two types of distinguished points, namely the center of the matrix ball at the one hand and its boundary points on the other hand.
 The \tqqa{\tnn{}} definite sequences or \tpqa{Schur} sequences which are starting from some index consist only of the centers of the matrix balls in question occupy an extremal position in the set of all sequences of the considered types.
 Similar things can be said about those sequences which contain an element of the boundary of the relevant matrix ball.
 A similar situation will be met for \tabHnnd{} sequences.
 This will be discussed in detail in \rsec{S0747}.
 
 The study of moment spaces was initiated by C.~Carath\'eodory~\zitas{MR1511425,zbMATH02629876} in the context of the trigonometric moment problem.
 The approach of Carath\'eodory was based on the theory of convexity.
 O.~Toeplitz~\zita{zbMATH02629875} observed that the results of Carath\'eodory can be reformulated in terms of \tnnH{} Toeplitz matrices.
 This view was then generally accepted and is also the basis for the approach in the matrix case in~\zitas{MR885621,MR918682}.
 
 In the study of the moment space connected with the \tabH{} moment problem, the theory of convexity played an important role from the very beginning.
 These developments were strongly influenced by M.~G.~Krein's landmark paper~\zita{MR0044591}, which essentially determined the further direction of research reflected in the monographs Karlin/Shapley~\zita{MR0059329}, Karlin/Studden~\zita{MR0204922}, and Krein/Nudelman~\zita{MR0458081}.
 It should be mentioned that Skibinsky~\zitas{MR0228040,MR0246351} considered probability measures on \([0,1]\) and observed that the \((n+1)\)\nobreakdash-th moment of such measures can vary within a closed bounded interval of the real axis.
 The work of Skibinsky was also an important source of inspiration for the theory of canonical moments, which was worked out by Dette/Studden~\zita{MR1468473}

\section{On the matrix version of the truncated \habH{} moment problem}\label{S1215}
 In order to formulate the moment problem, we are going to study, we first give some notation.
 Let \(\C\)\index{c@\(\C\)}, \(\R\)\index{r@\(\R\)}, \(\NO\)\index{n@\(\NO\)}, and \(\N\)\index{n@\(\N\)} be the set of all complex numbers, the set of all real numbers, the set of all \tnn{} integers, and the set of all positive integers, resp.
 For every choice of \(\kappa,\tau\in\R\cup\set{-\infty,\infi}\), let \symba{\mn{\kappa}{\tau}}{z} be the set of all integers \(\ell\) for which \(\kappa\leq\ell\leq\tau\) holds.
 Throughout this paper, let \(p\)\index{p@\(p\)} and \(q\)\index{q@\(q\)} be positive integers.
 If \(\cX\) is a \tne{} set, then \(\cX^\pxq\)\index{\(\cX^\pxq\)} stands for the set of all \tpqa{matrices} each entry of which belongs to \(\cX\), and \(\cX^p\)\index{\(\cX^p\)} is short for \(\cX^\xx{p}{1}\).
 We will write \symba{\CHq}{c}, \symba{\Cggq}{c}, and \symba{\Cgq}{c} for the set of all \tH{} complex \tqqa{matrices}, the set of all \tnnH{} complex \tqqa{matrices}, and the set of all \tpH{} complex \tqqa{matrices}, resp.
 If \(A\) and \(B\) are complex \tqqa{matrices}, then \symb{A\lgeq B} or \symb{B\lleq A} (resp.\ \symb{A>B} or \symb{B<A}) means that \(A\) and \(B\) are \tH{} and \(A-B\) is \tnnH{} (resp.\ \tpH{}).
 
 Let \((\Omega,\mathfrak{A})\) be a measurable space.
 Each countably additive mapping whose domain is \(\gA\) and whose values belong to \(\Cggq\) is called a \tnnH{} \tqqa{measure} on \((\Omega, \gA)\).
 For the integration theory with respect to \tnnH{} measures, we refer to Kats~\zita{MR0080280} and Rosenberg~\zita{MR0163346}.
 If \(\mu = \matauuo{\mu_{jk}}{j,k}{1}{q}\) is a \tnnH{} measure on \((\Omega,\mathfrak{A})\), then each entry function \(\mu_{jk}\) is a complex measure on \((\Omega,\mathfrak{A})\).
 In particular, \(\mu_{11},\mu_{22},\dotsc,\mu_{qq}\) are finite \tnn{} real-valued measures.
 For each \(H\in\Cggq\), the inequality \(H\leq(\tr H)\Iq\) holds true.
 Hence, each entry function \(\mu_{jk}\) is absolutely continuous with respect to the so-called trace measure \(\tau\defeq\sum_{j=1}^q\mu_{jj}\)\index{t@\(\tau\)} of \(\mu\), \ie{}, for each \(M\in\mathfrak{A}\) which satisfies \(\tau(M)=0\), it follows \(\mu(M)=\Oqq\).
 The Radon-Nikodym derivatives \symba{\dif\mu_{jk}/\dif\tau}{d} are thus well defined up to sets of zero \(\tau\)\nobreakdash-measure.
 Obviously, the matrix-valued function \(\mu_\tau'\defeq\matauuo{\dif\mu_{jk}/\dif\tau}{j,k}{1}{q}\)\index{\(\mu_\tau'\)} is \(\mathfrak{A}\)\nobreakdash-measurable and integrable with respect to \(\tau\).
 The matrix-valued function \(\mu_\tau'\) is said to be the trace derivative of \(\mu\).
 If \(\nu\) is a \tnn{} real-valued measure on \(\mathfrak{A}\), then let the class of all \(\mathfrak{A}\)\nobreakdash-measurable \tpqa{matrix-valued} functions \(\Phi=\matau{\phi_{jk}}{\substack{j=1,\dotsc,p\\ k=1,\dotsc,q}}\) on \(\Omega\) such that each \(\phi_{jk}\) is integrable with respect to \(\nu\) be denoted by \symba{\ek{\LaaaC{\Omega}{\mathfrak{A}}{\nu}}^\xx{p}{q}}{l}.
 An ordered pair \((\Phi,\Psi)\) consisting of an \(\mathfrak{A}\)\nobreakdash-measurable \tpqa{matrix-valued} function \(\Phi\) on \(\Omega\) and an \(\mathfrak{A}\)\nobreakdash-measurable \taaa{r}{q}{matrix-valued} function \(\Psi\) on \(\Omega\) is said to be integrable with respect to a \tnnH{} measure \(\mu\) on \((\Omega,\mathfrak{A})\) if \(\Phi\mu_\tau'\Psi^\ad\) belongs to \(\ek{\LaaaC{\Omega}{\mathfrak{A}}{\tau}}^\xx{p}{r}\), where \(\tau\) is the trace measure of \(\mu\).
 In this case, the integral of \((\Phi,\Psi)\) with respect to \(\mu\) is defined by\index{\(\int_\Omega\Phi\dif\mu\Psi^\ad\)}
 \[
  \int_\Omega\Phi\dif\mu\Psi^\ad
  \defeq\int_\Omega\Phi\mu_\tau'\Psi^\ad\dif\tau
 \]
 and for any \(M\in\mathfrak{A}\), the pair \((1_M\Phi,1_M\Psi)\) is integrable with respect to \(\mu\), where \symba{1_M}{1} is the indicator function of the set \(M\).
 Then the integral of \((\Phi,\Psi)\) with respect to \(\mu\) over \(M\) is defined by \(\int_M\Phi\dif\mu\Psi^\ad\defeq\int_\Omega\rk{1_M\Phi}\mu_\tau'\rk{1_M\Psi}^\ad\dif\tau\)\index{\(\int_M\Phi\dif\mu\Psi^\ad\)}.
 An \(\mathfrak{A}\)\nobreakdash-measurable complex-valued function \(f\) on \(\Omega\) is said to be integrable with respect to a \tnnH{} measure \(\mu\) on \((\Omega,\mathfrak{A})\) if the pair \((f\Iq,\Iq)\) is integrable with respect to \(\mu\).
 In this case the integral of \(f\) with respect to \(\mu\) is defined by\index{\(\int_\Omega f\dif\mu\)}
 \[
  \int_\Omega f\dif\mu
  \defeq\int_\Omega(f\Iq)\dif\mu\Iq^\ad
 \]
 and for any \(M\in\mathfrak{A}\), the function \(1_M f\) is integrable with respect to \(\mu\).
 Then the integral of \(f\) with respect to \(\mu\) over \(M\) is defined by \(\int_M f\dif\mu\defeq\int_M(f\Iq)\dif\mu\Iq^\ad\)\index{\(\int_M f\dif\mu\)}.
 We denote by \symba{\Loaaaa{1}{\Omega}{\mathfrak{A}}{\mu}{\C}}{l} the set of all \(\mathfrak{A}\)\nobreakdash-measurable complex-valued functions \(f\) on \(\Omega\) which are integrable with respect to a \tnnH{} measure \(\mu\) on \((\Omega,\mathfrak{A})\).

 Let \(\BorR\)\index{b@\(\BorR\)} (resp.\ \(\gB_\C \)) be the \(\sigma\)\nobreakdash-algebra of all Borel subsets of \(\R\) (resp.\ \(\C \)).
 For all \(\Omega\in\BorR\setminus\set{\emptyset}\), let \(\Bori{\Omega}\)\index{b@\(\Bori{\Omega}\)} be the \(\sigma\)\nobreakdash-algebra of all Borel subsets of \(\Omega\) and let \(\Mggqa{\Omega}\)\index{m@\(\Mggqa{\Omega}\)} be the set of all \tnnH{} \tqqa{measures} on \((\Omega,\Bori{\Omega})\).
 For all \(\kappa\in\NOinf \), let \(\Mgguqa{\kappa}{\Omega}\)\index{m@\(\Mgguqa{\kappa}{\Omega}\)} be the set of all \(\sigma\in\Mggqa{\Omega}\) such that for each \(j\in\mn{0}{\kappa}\) the function \(f_j\colon\Omega\to\C\) defined by \(f_j (x) \defeq  x^j\) belongs to \(\cL^1 (\Omega,\Bori{\Omega},\sigma;\C)\).
 If \(\sigma\in\MggquO{\kappa}\), then let
 \begin{equation}\label{mom}
 \suo{j}{\sigma}
 \defg\int_\Omega x^j\sigma(\dif x)
 \end{equation}
 \index{s@\(\suo{j}{\sigma}\)}for all \(j\in\mn{0}{\kappa}\).
 Obviously, \(\Mggoa{q}{\Omega}\subseteq \MggquO{k}\subseteq \MggquO{k+1}\subseteq\MggquO{\infi}\) holds true for every choice of \(\Omega\in\gB_\R\setminus \{\emptyset\}\) and \(k\in\NO \).
 Furthermore, if \(\Omega\) is a \tne{} bounded Borel subset of \(\R\), then one can easily see that \(\Mggoa{q}{\Omega} =\MggquO{\infi}\).
 In particular, for all \(\ug\in\R\) and \(\og\in(\ug,\infp)\) we have
\beql{ab8}
 \MggqF
 =\MggquF{\infi}.
\eeq
 
 Let \symba{\MggqmolO}{m} be the set of all \(\sigma\in\MggqO\) for which there exists a finite subset \(B\) of \(\Omega\) satisfying \(\sigma\rk{\Omega\setminus B}=\Oqq\).
 The elements of \(\MggqmolO\) are called \notii{molecular}.
 Obviously, \(\MggqmolO\) is the set of all \(\sigma\in\MggqO\) for which there exist an \(m\in\N\) and sequences \(\seq{\xi_\ell}{\ell}{0}{m}\) and \(\seq{M_\ell}{\ell}{0}{m}\) from \(\Omega\) and \(\Cggq\) such that
 \(
  \sigma
  =\sum_{\ell=1}^m\Kronu{\xi_\ell}M_\ell
 \),
 where \symba{\Kronu{\xi_\ell}}{d} is the Dirac measure on \(\BorO\) with unit mass at \(\xi_\ell\), \(\ell\in\mn{1}{m}\).
 In particular, we have \(\MggqmolO\subseteq\MggquO{\infi}\).
 
 The following matricial moment problem lies in the background of our considerations:

 \begin{description}
  \item[\mproblem{\Omega}{\kappa}{=}]\index{m@\mproblem{\Omega}{\kappa}{=}} Let \(\Omega\in\BorR\setminus\set{\emptyset}\) and let  \(\seqs{\kappa}\) be a sequence from \(\Cqq\).
  Parametrize the set \(\Mggqaag{\Omega}{\seqs{\kappa}}\)\index{m@\(\Mggqaag{\Omega}{\seqs{\kappa}}\)} of all \(\sigma\in\Mgguqa{\kappa}{\Omega}\) such that \(s_j^{[\sigma]}=\su{j}\) for all \(j\in\mn{0}{\kappa}\).
 \end{description}
 
 In this paper, we often use the procedure of reflecting measures on the real axis.
 For this reason, we introduce some terminology.
 Let \(R\colon\R\to\R\) be defined by \(x\mapsto-x\).
 Then \(R\) is a continuous involution of \(\R\) and consequently \(\BorR\)\nobreakdash-\(\BorR\)\nobreakdash-measurable.
 In view of \(\Omega\in\BorR\setminus\set{\emptyset}\), the set \symb{\refl{\Omega}\defeq R^\inv\rk{\Omega}} belongs to \(\BorR\setminus\set{\emptyset}\) and the mapping \symba{R_\Omega\defeq\Rstr_\Omega R}{r} is \(\BorO\)\nobreakdash-\(\Bori{\refl{\Omega}}\)\nobreakdash-measurable, whereas the mapping \symba{R_{\refl{\Omega}}\defeq\Rstr_{\refl{\Omega}}R}{r} is \(\Bori{\refl{\Omega}}\)\nobreakdash-\(\BorO\)\nobreakdash-measurable.
 Moreover \(R_\Omega\circ R_{\refl{\Omega}}=\Id_\Omega\) and \(R_{\refl{\Omega}}\circ R_\Omega=\Id_{\refl{\Omega}}\).
 For each \(\sigma\in\MggqO\), we denote by \symb{\refl{\sigma}} the image measure of \(\sigma\) with respect to \(R_\Omega\), \ie{}, for \(B\in\Bori{\refl{\Omega}}\), we have
\beql{reflm}
 \refl{\sigma}\rk{B}
 \defeq\sigma\rk*{R_\Omega^\inv\rk{B}}.
\eeq
 By construction then \(\refl{\sigma}\in\Mggqa{\refl{\Omega}}\).
 If \(\kappa\in\NOinf\) and if \(\sigma\in\MggquO{\kappa}\) then it is easily checked that \(\refl{\sigma}\in\Mgguoa{\kappa}{q}{\refl{\Omega}}\) and that \(\suo{j}{\refl{\sigma}}=(-1)^j\suo{j}{\sigma}\) for all \(j\in\mn{0}{\kappa}\).

 Using the preceding considerations we get the following result:
\breml{R0817}
 Let \(\Omega\in\BorR\setminus\set{\emptyset}\), let \(\kappa\in\NOinf\), and let \(\seqska\) be a sequence from \(\Cqq\).
 Let \(\refl{\Omega}\defeq\setaa{x\in\R}{-x\in\Omega}\) and let \(r_j\defeq(-1)^j\su{j}\) for all \(j\in\mn{0}{\kappa}\).
 Then  \(\refl{\Omega}\in\BorR\setminus\set{\emptyset}\) and, using the notation introduced in \eqref{reflm}, we get \(\Mggqaag{\refl{\Omega}}{\seqr{\kappa}}=\setaa{\refl{\sigma}}{\sigma\in\MggqOsg{\kappa}}\).
\erem

 The discussions of this paper are mostly concentrated on the case that the set \(\Omega\) is a bounded and closed interval of the real axis.
 Such moment problems are called to be of Hausdorff-type.
 The following special case of \rrem{R0817} is of particular interest for us.
\breml{R0827}
 Let \(\ug\in\R\), let \(\og\in(\ug,\infp)\), let \(\kappa\in\NOinf\), and let \(\seqska\) be a sequence from \(\Cqq\).
 Let \(r_j\defeq(-1)^j\su{j}\) for all \(j\in\mn{0}{\kappa}\).
 Then \(\Mggqaag{[-\og,-\ug]}{\seqr{\kappa}}=\setaa{\refl{\sigma}}{\sigma\in\MggqFs{\kappa}}\).
\erem
 
 The solvability of the truncated matricial \tabH{}-type moment problem can be characterized as follows:
\bthmnl{\zitaa{MR2222521}{\cthm{1.2}}}{T12}
 Let \(\ug\in\R\), let \(\og\in(\ug,\infp)\), let \(n\in\N\), and let \(\seqs{2n-1}\) be a sequence from \(\Cqq\).
 Then \(\MggqFs{2n-1} \ne \emptyset\)  if and only if the block Hankel matrices
\begin{align}\label{T12.A}
 &\matauuo{-\alpha\su{j+k} +\su{j+k+1}}{j,k}{0}{n-1}&
 &\text{and}&
 &\matauuo{\beta\su{j+k}  -\su{j+k+1}}{j,k}{0}{n-1}
\end{align}
 are both \tnnH{}.
\end{thm}

\bthmnl{\zitaa{MR2342899}{\cthm{1.3}}}{T1.2*}
 Let \(\ug\in\R\), let \(\og\in(\ug,\infp)\), let \(n\in\N\), and let \(\seqs{2n}\) be a sequence from \(\Cqq\).
 Then \(\MggqFs{2n} \ne \emptyset\)  if and only if the block Hankel matrices
\begin{align}\label{CR2}
 &\matauuo{\su{j+k}}{j,k}{0}{n}&
 &\text{and}&
 &\matauuo{-\alpha\beta\su{j+k} + (\alpha +\beta)\su{j+k+1} -\su{j+k+2}}{j,k}{0}{n-1}
\end{align}
 are both \tnnH{}.
\end{thm}

 In the scalar case \(q=1\), \rthmss{T12}{T1.2*} are due to  M.~G.~Krein~\cite{MR0044591} (see also~\zitaa{MR0458081}{\cchap{III}}).
 The approach of M.~G.~Krein is essentially based on the use of the theory of convexity along the lines applied by C.~Carath\'eodory~\zitas{MR1511425,zbMATH02629876} in the context of the trigonometric moment problem.
 The proofs of \rthmss{T12}{T1.2*} given in~\zitas{MR2222521,MR2342899}, resp., are rather complicated and not constructive.
 They are based on Potapov's method of fundamental matrix inequalities and an extensive explicit solving procedure of the corresponding system of these inequalities in the non-degenerate case.
 Indeed, if the block Hankel matrices in \eqref{T12.A} and \eqref{CR2} are both \tpH{}, then~\zitaa{MR2222521}{\ccor{6.13}} and~\zitaa{MR2342899}{\ccor{6.15}} show that Problem~\mproblem{\ab}{2n-1}{=} and Problem~\mproblem{\ab}{2n}{=} have solutions.
 In the proofs of~\zitaa{MR2222521}{\cthm{1.2}} and~\zitaa{MR2342899}{\cthm{1.3}} a perturbation argument for the construction of a sequence of corresponding approximating non-degenerate problems is used and then a matricial version of the Helly-Prokhorov Theorem (see~\cite[Satz~9]{MR975253} or~\cite[Lemma~2.2.1]{MR1152328}) is applied.

\section{On \hHnnd{} sequences}\label{S0832}
 In this section, we summarize essential properties on two classes of sequences of complex \tqqa{matrices} which are a main tool for the investigations of this paper.
 This material is mainly taken from~\zitas{MR2570113,MR2805417}.
 
\bdefnl{D1238}
 Let \(n\in\NO\) and let \(\seqs{2n}\) be a sequence from \(\Cqq\).
 Let %
\symba{\Hu{n}\defeq\matauuo{\su{j+k}}{j,k}{0}{n}}{h}.
 Then \(\seqs{2n}\) is called \noti{\tHnnd}{n@non-negative definite!Hankel} (resp.\ \noti{\tHpd}{p@positive definite!Hankel}) if \(\Hu{n}\in\Cggo{(n+1)q}\) (resp.\ \(\Cgo{(n+1)q}\)).
 We denote by \symba{\Hggqu{2n}}{h} (resp.\ \symba{\Hgqu{2n}}{h}) the set of all \tHnnd{} (resp.\ \tHpd{}) sequences \(\seqs{2n}\) from \(\Cqq\).
\edefn

\bdefnl{D1255}
 \benui
  \il{D1255.a} Let \(n\in\NO\) and let \(\seqs{2n}\) be a sequence from \(\Cqq\).
  Then \(\seqs{2n}\) is called \notii{\tHnnde{}} (resp.\ \notii{\tHpde{}}) if there exist matrices \(\su{2n+1}\) and \(\su{2n+2}\) from \(\Cqq\) such that \(\seqs{2n+2}\in\Hggqu{2n+2}\) (resp.\ \(\seqs{2n+2}\in\Hgqu{2n+2}\)).
  \il{D1255.b} Let \(n\in\NO\) and let \(\seqs{2n+1}\) be a sequence from \(\Cqq\).
  Then \(\seqs{2n+1}\) is called \notii{\tHnnde{}} (resp.\ \notii{\tHpde{}}) if there exists a matrix \(\su{2n+2}\) from \(\Cqq\) such that \(\seqs{2n+2}\in\Hggqu{2n+2}\) (resp.\ \(\seqs{2n+2}\in\Hgqu{2n+2}\)).
 \eenui
 If \(m\in\NO\), then the notation \symba{\Hggequ{m}}{h} (resp.\ \symba{\Hgequ{m}}{h}) stands for the set of all sequences \(\seqs{m}\) from \(\Cqq\) which are \tHnnde{} (resp.\ \tHpde{}).
\edefn

 The importance of the class \(\Hggequ{m}\) in the context of moment problems is caused by the following observation:
\begin{thm}[see~\zitaa{MR2570113}{\ctheo{4.17}}]\label{T1.2}
 Let \(m\in\NO\) and let \(\seqs{m}\) be a sequence from \(\Cqq\).
 Then \(\MggqRsg{m}\neq\emptyset\) if and only if \(\seqs{m}\in\Hggequ{m}\).
\end{thm}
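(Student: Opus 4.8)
The assertion is the solvability criterion for the truncated matricial Hamburger moment problem $\mproblem{\R}{m}{=}$, and I would prove the two implications separately. For the \emph{necessity}, suppose $\sigma\in\MggqRsg{m}$ and write $m=2n$ or $m=2n+1$ with $n\in\NO$. The block Hankel matrix $\Hu{n}$ is \tnnH{}, being the Gram matrix of the monomials $1,x,\dotsc,x^n$ with respect to $\sigma$, so that $\Hu{n}\in\Cggo{(n+1)q}$. To obtain the extendability, I would invoke the known fact that a solvable truncated Hamburger moment problem on $\R$ admits a molecular solution $\tilde\sigma=\sum_{\ell=1}^{N}\Kronu{\xi_\ell}M_\ell$ with $\suo{j}{\tilde\sigma}=\su{j}$ for all $j\in\mn{0}{m}$ (a Tchakaloff-type result; see~\zitas{MR2570113,MR2805417}). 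Since $\tilde\sigma$ is finitely supported, it possesses power moments of every order; hence, putting $\su{j}\defeq\suo{j}{\tilde\sigma}$ for $j\in\mn{m+1}{2n+2}$ yields $\Hu{n+1}\in\Cggo{(n+2)q}$, \ie{}, $\seqs{2n+2}\in\Hggqu{2n+2}$, so that $\seqs{m}\in\Hggequ{m}$ by \rdefn{D1255}.

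For the \emph{sufficiency}, let $\seqs{m}\in\Hggequ{m}$. Iterating the one-step extension property for Hankel non-negative definite extendable sequences --- the analogue for all of $\R$ of \rthm{T0718}, established in~\zitas{MR2570113,MR2805417} --- I would adjoin matrices $\su{m+1},\su{m+2},\dotsc$ so that $\seq{\su{j}}{j}{0}{k}\in\Hggequ{k}$ for every integer $k\geq m$; in particular, $\Hu{\ell}\in\Cggo{(\ell+1)q}$ for all $\ell\in\NO$. The classical solvability theorem for the non-truncated matricial Hamburger moment problem (see~\zitas{MR2570113,MR2805417}) then yields a $\sigma\in\Mgguqa{\infi}{\R}$ with $\suo{j}{\sigma}=\su{j}$ for all $j\in\NO$, and restricting to $j\in\mn{0}{m}$ gives $\sigma\in\MggqRsg{m}$.

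The heart of the matter is the purely algebraic one-step extension step: from $\seq{\su{j}}{j}{0}{k}\in\Hggequ{k}$ one has to produce some $\su{k+1}$ with $\seq{\su{j}}{j}{0}{k+1}\in\Hggequ{k+1}$, and not merely with $\seq{\su{j}}{j}{0}{k+1}\in\Hggqu{k+1}$. This requires careful control of the Schur complements of the block Hankel matrices and of the column spaces of the blocks occurring in them, that is, precisely the matrix algebra organized around the parallel sum that pervades this paper. An alternative, more self-contained route to sufficiency avoids the non-truncated theorem: one first settles the non-degenerate case $\Hu{n}\in\Cgo{(n+1)q}$ by writing down a molecular solution explicitly from matricial orthogonal polynomials and a Gauss-type quadrature rule, and then reduces the general, degenerate case to it by a standard perturbation argument making the relevant block Hankel matrix positive definite and letting the perturbation tend to zero with the help of a matricial Helly-Prokhorov theorem (see~\zita{MR975253}). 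In both approaches it is the degenerate case that carries the real weight.
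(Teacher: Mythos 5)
The paper offers no proof of \rthm{T1.2}: it is quoted verbatim from \zitaa{MR2570113}{\ctheo{4.17}}, so there is no internal argument to measure your proposal against. Your two-sided outline is structurally the standard one, and the sufficiency half is sound as you set it up: the crux you correctly isolate --- extending within \(\Hggequ{\cdot}\) rather than merely within \(\Hggqu{\cdot}\) --- is exactly what the canonical \tHd{} extension of \rthmp{T1002}{T1002.a} (\ie{} \zitaa{MR2570113}{\cprop{2.38}}) supplies in one stroke, after which the non-truncated matricial Hamburger theorem finishes the argument; your alternative route via orthogonal polynomials, quadrature and a matricial Helly--Prokhorov argument is also viable and is the one used for the \(\ab\)-analogues \rthmss{T12}{T1.2*}.

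The necessity half, however, has a genuine logical gap as written. You derive extendability from the existence of a \emph{molecular} representing measure and cite \zitas{MR2570113,MR2805417} for that existence; but in those sources (and in the present paper, see \rthm{T1002}) molecular solutions are \emph{constructed from} the hypothesis \(\seqs{m}\in\Hggequ{m}\), which is precisely what you are trying to prove, so the argument is circular unless you import an independently established Tchakaloff--Bayer--Teichmann type theorem. The standard repair avoids molecular measures altogether: verify the algebraic characterizations of \(\Hggequ{m}\) directly from an arbitrary representing measure \(\sigma\), namely the conditions of \rlem{L0913} for odd \(m=2n+1\) and of \rlem{L0920} for even \(m=2n\). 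The only nontrivial condition is the range inclusion \(\ran{\yuu{n+1}{2n+1}}\subseteq\ran{\Hu{n}}\) (resp.\ \(\ran{\yuu{n+1}{2n}}\subseteq\ran{\Hu{n-1}}\)), and this follows from a Cauchy--Schwarz argument applied to the integral representations of \(\Hu{n}\) and \(\yuu{n+1}{2n+1}\): a vector in \(\nul{\Hu{n}}\) forces the trace derivative of \(\sigma\) to annihilate the corresponding polynomial column almost everywhere, hence it also annihilates \(\yuu{n+1}{2n+1}\). Only moments up to order \(m\) enter this computation, so no divergence issue arises and no auxiliary finitely atomic measure is needed.
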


 Now we are going to indicate some essential features of the structure of \tHnnd{} sequences.
 First we introduce some matrices which occupy a key role in the sequel.
 For each matrix \(A\in\Cpq\), we denote by \symb{A^\mpi} its Moore-Penrose inverse.
 This means that \(A^\mpi\) is the unique matrix \(X\) from \(\Cqp\) which satisfies the four equations \(AXA=A\), \(XAX=X\), \((AX)^\ad=AX\), and \((XA)^\ad=XA\).
 For every choice of \(n\in\N\) and \(A_1,A_2,\dotsc,A_n\in\Cpq\), let \symba{\col\seq{A_j}{j}{1}{n}\defeq\mat{A_1^\ad,A_2^\ad,\dotsc,A_n^\ad}^\ad}{c} and let \symba{\row\seq{A_j}{j}{1}{n}\defeq\mat{A_1,A_2,\dotsc,A_n}}{r}.
 The null matrix which belongs to \(\Cpq\) is denoted by \symba{\Opq}{0}, whereas \symba{\Ip}{i} is the identity matrix belonging to \(\Cpp\).
\bnotal{N41}
 Let \(\kappa\in\NOinf\) and let \(\seqska \) be a sequence from \(\Cpq\).
\benui
 \il{N41.a} Let \symba{\Hu{n}\defeq \matauuo{\su{j+k}}{j,k}{0}{n}}{h} for all \(n\in\NO \) with  \(2n\leq\kappa\), let \symba{\Ku{n}\defeq\matauuo{\su{j+k+1}}{j,k}{0}{n}}{k} for all  \(n\in\NO \) with  \(2n+1\leq\kappa\), and let \symba{\Gu{n}\defeq\matauuo{\su{j+k+2}}{j,k}{0}{n}}{g} for all  \(n\in\NO \) with \(2n+2\leq\kappa\).
 \il{N41.b} Let \symba{\yuu{\ell}{m} \defeq  \col (s_j)_{j=\ell}^m}{y} and \symba{\zuu{\ell}{m} \defeq\row(s_j)_{j=\ell}^m}{z} for all \(\ell,m\in\NO\) with \(\ell\leq m\leq\kappa\).
 \il{N41.c} Let \symba{\Trip{0} \defeq  \Opq}{t} and, for all \(n\in\N\) with \(2n-1\leq\kappa\), let \symba{\Trip{n}\defeq  \zuu{n}{2n-1}  \Hu{n-1}^\mpi    \yuu{n}{2n-1}}{t}.
 Furthermore, for all \(n\in\NO \) with  \(2n\leq\kappa\), let \symba{\Lu{n}  \defeq\su{2n}-\Trip{n}}{l}.
 \il{N41.d} Let \symba{\Mu{0} \defeq  \Opq}{m},
 and, for all \(n\in\N\) with \(2n\leq\kappa\), let \symba{\Mu{n} \defeq  \zuu{n}{2n-1}\Hu{n-1}^\mpi \yuu{n+1}{2n}}{m}.
\eenui
\end{nota}

 If we build the matrices introduced in \rnota{N41} from an other given sequence, \eg{} \(\seq{t_j}{j}{0}{\kappa}\), then this is indicated by a superscript \(\langle t\rangle\), \ie{} \symba{\Hu{n}^{\langle t\rangle}\defeq \matauuo{t_{j+k}}{j,k}{0}{n}}{h}, etc.

\breml{R3240}
 Let \(n\in\N\) and let \(\seqs{2n}\) be a sequence from \(\Cqq\).
 Then the block Hankel matrix \(\Hu{n}\) admits the block partition
 \beql{R3240.1}
  \Hu{n}
  =
  \bMat
   \Hu{n-1}&\yuu{n}{2n-1}\\
   \zuu{n}{2n-1}&\su{2n}
  \eMat.
 \eeq
\erem

 Let \symba{\ran{A}\defeq\setaa{Ax}{x\in\Cq}}{r} and \symba{\nul{A}\defeq\setaa{x\in\Cq}{Ax=\Ouu{p}{1}}}{n} be the column space and the null space of a matrix \(A\in\Cpq\).
\breml{R1240}
 Let \(n\in\N\) and let \(\seqs{2n}\) be a sequence from \(\Cqq\).
 Then:
 \benui
  \il{R1240.a} \(\seqs{2n}\in\Hggqu{2n}\) if and only if \(\seqs{2n-2}\in\Hggqu{2n-2}\), \(\Lu{n}\in\Cggq\), \(\ran{\yuu{n}{2n-1}}\subseteq\Hu{n-1}\), and \(\su{2n+1}\in\CHq\).
  \il{R1240.b} \(\seqs{2n}\in\Hgqu{2n}\) if and only if \(\seqs{2n-2}\in\Hgqu{2n-2}\), \(\Lu{n}\in\Cgq\), and \(\su{2n+1}\in\CHq\).
 \eenui
\erem
\bproof
 Taking into account \eqref{R3240.1}, this is a consequence of \rlem{AEP}.
\eproof

 A sequence \(\seqsinf\) from \(\Cqq\) is called \noti{\tHnnd{}}{\hHnnd{}} (resp.\ \noti{\tHpd{}}{\hHpd{}}) if \(\seqs{2n}\in\Hggqu{2n}\) (resp.\ \(\seqs{2n}\in\Hgqu{2n}\)) for all \(n\in\NO\).
 We denote by \symba{\Hggqinf}{h} (resp.\ \symba{\Hgqinf}{h}) the set of all \tHnnd{} (resp.\ \tHpd{}) sequences \(\seqsinf\) from \(\Cqq\).

\breml{L1251}
 Let \(\kappa\in\NOinf\).
 Then:
 \benui
  \il{L1251.a} If \(\seqs{2\kappa}\in\Hggqu{2\kappa}\), then \(\su{j}\in\CHq\) for all \(j\in\mn{0}{2\kappa}\) and \(\su{2k}\in\Cggq\) for all \(k\in\mn{0}{\kappa}\).
  \il{L1251.b} If \(\seqs{2\kappa}\in\Hgqu{2\kappa}\), then \(\su{j}\in\CHq\) for all \(j\in\mn{0}{2\kappa}\) and \(\su{2k}\in\Cgq\) for all \(k\in\mn{0}{\kappa}\).
 \eenui
\erem

\bleml{L1548}
 Let \(n\in\N\) and let \(\seqs{2n-1}\) be a sequence from \(\CHq\).
 Then:
 \benui
  \il{L1548.a} If \(\seqs{2n-2}\in\Hggqu{2n-2}\), then \(\Trip{n}\in\Cggq\).
  \il{L1548.b} If \(\seqs{2n-2}\in\Hgqu{2n-2}\) and \(n\geq2\), then \(\Trip{n}\in\Cgq\).
 \eenui
\elem
\bproof
 \eqref{L1548.a} This follows immediately from \rremss{R1546}{R0705}.

 \eqref{L1548.b} Now suppose that \(\seqs{2n-2}\in\Hgqu{2n-2}\) and \(n\geq2\). Then \(\Hu{n-1}\) is \tpH{}. In particular, \(\Hu{n-1}^\inv\) is \tpH{}. From \rremp{L1251}{L1251.b}  we obtain \(\su{2n-2}\in\Cgq\). In particular, \(\det\su{2n-2}\neq0\). Because of \(n\geq2\), the matrix \(\su{2n-2}\) is a block in \(\yuu{n}{2n-1}\). Thus, \(\rank\yuu{n}{2n-1}=q\). Hence, \(\Trip{n}\in\Cgq\) follows from \rrem{R0705}.
\eproof

\breml{R1451}
 Let \(n\in\NO\).
 Then \(\Hggequ{2n}\subseteq\Hggqu{2n}\) and, in the case \(n\in\N\), furthermore \(\Hggequ{2n}\neq\Hggqu{2n}\).
\erem

 For all \(n\in\NO\) let\index{j@\(\Jq{n}\defeq\diag\seq{(-1)^j\Iq}{j}{0}{n}\)}
\[
 \Jq{n}
 \defeq\diag\Seq{(-1)^j\Iq}{j}{0}{n}.
\]

\breml{R0950}
 Let \(n\in\NO\).
 Then \(\Jq{n}^\ad=\Jq{n}\) and \(\Jq{n}^2=\Iu{(n+1)q}\).
 In particular, \(\Jq{n}\) is unitary.
\erem

\bleml{L0937}
 Let \(n\in\NO\) and let \(\seqs{2n}\) be a sequence from \(\Cqq\).
 Let the sequence \(\seqr{2n}\) be given by \(r_j\defeq(-1)^j\su{j}\).
 Then:
 \benui
  \il{L0937.a} Let \symba{\Huo{n}{s}\defeq\matauuo{\su{j+k}}{j,k}{0}{n}}{h} and \symba{\Huo{n}{r}\defeq\matauuo{r_{j+k}}{j,k}{0}{n}}{h}.
 Then \(\Huo{n}{r}=\Jq{n}^\ad\Huo{n}{s}\Jq{n}\).
  \il{L0937.b} \(\seqs{2n}\in\Hggqu{2n}\) if and only if \(\seqr{2n}\in\Hggqu{2n}\).
  \il{L0937.c} \(\seqs{2n}\in\Hgqu{2n}\) if and only if \(\seqr{2n}\in\Hgqu{2n}\).
 \eenui
\elem
\bproof
 \rPart{L0937.a} follows by direct computation. 
 \rPartss{L0937.b}{L0937.c} follow from~\eqref{L0937.a}.
\eproof

\bleml{L0946}
 Let \(m\in\NO\) and let \(\seqs{m}\) be a sequence from \(\Cqq\).
 Let the sequence \(\seqr{m}\) be given by \(r_j\defeq(-1)^j\su{j}\).
 Then \(\seqs{m}\in\Hggequ{m}\) if and only if \(\seqr{m}\in\Hggequ{m}\).
\elem
\bproof
 Use \rlemp{L0937}{L0937.b}.
\eproof

 Now we turn our attention to a subclass of \tHnnd{} sequences, which plays a central role in the sequel.
 If \(n\in\NO\) and if \(\seqs{2n}\in\Hggqu{2n}\), then \(\seqs{2n}\) is called \notii{\tHd{}} if \(\Lu{n}=\Oqq\), where \(\Lu{n}\) is given in \rnotap{N41}{N41.c}.
 We denote by \symba{\Hggdqu{2n}}{h} the set of all sequences \(\seqs{2n}\in\Hggqu{2n}\) which are \tHd{}.
 If \(n\in\NO\), then a sequence \(\seqsinf\in\Hggqinf\) is said to be \notii{\tHdo{n}} if \(\seqs{2n}\in\Hggdqu{2n}\).
 A sequence \(\seqsinf\in\Hggqinf\) is called \notii{\tHd{}} if there exists an \(n\in\NO\) such that \(\seqsinf\) is \tHdo{n}.

\section{On \habHnnd{} sequences from $\Cqq$}\label{S1237}
 Against to the background of \rthmss{T12}{T1.2*} now we are going to introduce one of the central notions of this paper.
 Before doing this, we introduce some notation.

\begin{nota}\label{N5.1}
 Let \(\alpha,\beta\in\C\), let \(\kappa\in\Ninf\), and let  \(\seqska \) be a sequence from \(\Cqq\).
 Then let the sequences \((\sau{j})_{j=0}^{\kappa - 1}\) and \((\sbu{j})_{j=0}^{\kappa - 1}\) be given by 
\begin{align*}%
 \sau{j}&\defeq  -\alpha s_j +\su{j+1}&
 &\text{and}&
 \sbu{j}&\defeq \beta s_j -\su{j+1}&
 \text{for each }j&\in\mn{0}{\kappa -1}.
\end{align*}
 Furthermore, if \(\kappa \ge 2\), then let \((\scu{j})_{j=0}^{\kappa - 2}\) be defined by 
\begin{align*}%
 \scu{j}&\defeq  -\alpha\beta s_j + (\alpha +\beta)\su{j+1} -\su{j+2}&
 \text{for all }j&\in\mn{0}{\kappa -2}.
\end{align*}
\end{nota}
 When using the sequences from \rnota{N5.1}, we write \symba{\Hau{n}}{h}, \symba{\Hbu{n}}{h}, \symba{\Hcu{n}}{h}, etc.
 
\bdefnl{D1159}
 Let \(\ug\in\R\), let \(\og\in(\ug,\infp)\), and let \(n\in\NO\).
 \benui
  \il{D1159.a} Let \(\seqs{2n+1}\) be a sequence from \(\Cqq\).
  Then \(\seqs{2n+1}\) is called \noti{\tabHnnd{}}{h@\habHnnd} (resp.\ \noti{\tabHpd{}}{h@\habHpd}) if both sequences \(\seqsa{2n}\) and \(\seqsb{2n}\) are \tHnnd{} (resp.\ \tHpd{}).
  \il{D1159.b} Let \(\seqs{2n}\) be a sequence from \(\Cqq\).
  Then \(\seqs{2n}\) is called \noti{\tabHnnd{}}{h@\habHnnd} (resp.\ \noti{\tabHpd{}}{h@\habHpd}) if \(\seqs{2n}\) and, in the case \(n\geq1\), moreover \(\seqsc{2(n-1)}\) is \tHnnd{} (resp.\ \tHpd{}).
 \eenui
 If \(m\in\NO\), then the symbol \symba{\Fggqu{m}}{f} (resp.\ \symba{\Fgqu{m}}{f}) stands for the set of all \tabHnnd{} (resp.\ \tabHpd{}) sequences \(\seqs{m}\) from \(\Cqq\).
\edefn

 In view of \rdefn{D1159}, now \rthmss{T12}{T1.2*} can be summarized and reformulated as follows:

\begin{thm}\label{LHP}
 Let \(\ug\in\R\), let \(\og\in(\ug,\infp)\), let \(m\in\NO \), and let \(\seqs{m}\) be a sequence from \(\Cqq\).
 Then \(\MggqFs{m}\ne \emptyset\) if and only if \(\seqs{m} \in \Fggqu{m}\).
\end{thm}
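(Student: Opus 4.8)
The plan is to derive the assertion directly from the solvability criteria \rthm{T12} and \rthm{T1.2*} by translating \rdefn{D1159} into statements about the block Hankel matrices occurring in \eqref{T12.A} and \eqref{CR2}. The quantity \(\MggqFs{m}\) appearing here is literally the one in \rthmss{T12}{T1.2*} (since \(\ab\) is bounded, \eqref{ab8} moreover shows it does not depend on the truncation index), so for odd, respectively even, \(m\ge1\) those theorems already settle whether it is empty; all that remains is to identify the two non\-/negative definiteness conditions stated there with the single condition \(\seqs{m}\in\Fggqu{m}\), and to treat separately the value \(m=0\), which is excluded from \rthm{T12} and \rthm{T1.2*} (they require \(n\in\N\)). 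Accordingly I would distinguish the cases \(m=2n-1\) with \(n\in\N\), \(m=2n\) with \(n\in\N\), and \(m=0\).

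For \(m=2n-1\) with \(n\in\N\): unravelling \rnota{N5.1} shows that \(\Hau{n-1}=\matauuo{\sau{j+k}}{j,k}{0}{n-1}\) is exactly the left\-/hand block Hankel matrix in \eqref{T12.A} and \(\Hbu{n-1}=\matauuo{\sbu{j+k}}{j,k}{0}{n-1}\) the right\-/hand one. By part~(a) of \rdefn{D1159}, the condition \(\seqs{2n-1}\in\Fggqu{2n-1}\) means precisely that \(\seqsa{2(n-1)}\in\Hggqu{2(n-1)}\) and \(\seqsb{2(n-1)}\in\Hggqu{2(n-1)}\), \ie{} that both matrices in \eqref{T12.A} are \tnnH{}; hence \rthm{T12} gives \(\MggqFs{2n-1}\ne\emptyset\) if and only if \(\seqs{2n-1}\in\Fggqu{2n-1}\). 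The case \(m=2n\) with \(n\in\N\) runs along the same lines: \(\Hu{n}=\matauuo{\su{j+k}}{j,k}{0}{n}\) and \(\Hcu{n-1}=\matauuo{\scu{j+k}}{j,k}{0}{n-1}\) are precisely the two matrices in \eqref{CR2}, and by part~(b) of \rdefn{D1159} the condition \(\seqs{2n}\in\Fggqu{2n}\) means \(\seqs{2n}\in\Hggqu{2n}\) and \(\seqsc{2(n-1)}\in\Hggqu{2(n-1)}\), \ie{} that both matrices in \eqref{CR2} are \tnnH{}; so \rthm{T1.2*} yields the claim for even \(m\ge2\).

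Finally, the value \(m=0\) is elementary. By part~(b) of \rdefn{D1159}, \(\seqs{0}\in\Fggqu{0}\) just means \(\su{0}=\Hu{0}\in\Cggq\); on the other hand, there is a \(\sigma\in\MggqF\) with \(\suo{0}{\sigma}=\su{0}\) if and only if \(\su{0}\in\Cggq\), since necessity follows from \(\su{0}=\sigma(\ab)\in\Cggq\) and sufficiency from the fact that \(\sigma\defeq\Kronu{\ug}\su{0}\) is such a measure (being concentrated at the point \(\ug\in\ab\) with total mass \(\su{0}\)). Thus \(\MggqFs{0}\ne\emptyset\) if and only if \(\seqs{0}\in\Fggqu{0}\), and together with the two previous cases this covers all \(m\in\NO\). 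I do not expect any genuine obstacle here: the analytic substance lies entirely in \rthmss{T12}{T1.2*}, and the only point requiring care is the index bookkeeping between \rnota{N5.1} and the matrices in \eqref{T12.A} and \eqref{CR2} — in particular, one should verify that \rnota{N5.1} is applied with truncation \(\kappa=m\) in each case and that the derived sequences \(\seqsa{\cdot}\), \(\seqsb{\cdot}\), \(\seqsc{\cdot}\) are taken up to the correct length.
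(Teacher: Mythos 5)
Your proposal is correct and is essentially the paper's own argument: the paper states \rthm{LHP} without a separate proof, presenting it as a direct reformulation of \rthmss{T12}{T1.2*} via \rdefn{D1159}, which is exactly the identification of \(\Hau{n-1}\), \(\Hbu{n-1}\) with the matrices in \eqref{T12.A} and of \(\Hu{n}\), \(\Hcu{n-1}\) with those in \eqref{CR2} that you carry out. Your explicit treatment of the case \(m=0\) (via \(\su{0}=\sigma(\ab)\) and the Dirac measure \(\Kronu{\ug}\su{0}\)) fills in a detail the paper leaves implicit, and your index bookkeeping is accurate.
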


\bdefnl{D1609}
 Let \(\ug\in\R\), let \(\og\in(\ug,\infp)\), let \(m\in\NO\), and let \(\seqs{m}\) be a sequence from \(\Cqq\).
 Then \(\seqs{m}\) is called \noti{\tabHnnde{}}{h@\habHnnde} (resp.\ \noti{\tabHpde{}}{h@\habHpde}) if there exists a matrix \(\su{m+1}\in\Cqq\) such that \(\seqs{m+1}\) is \tabHnnd{} (resp.\ \tabHpd{}).
 We denote by \symba{\Fggequ{m}}{f} (resp.\ \symba{\Fgequ{m}}{f}) the set of all \tabHnnde{} (resp.\ \tabHpde{}) sequences \(\seqs{m}\) from \(\Cqq\).
\edefn

 Using \rthm{LHP}, we derive now several algebraic results on the matrix sequences introduced in \rdefnss{D1159}{D1609}, resp.
 
\bpropl{P1750}
 Let \(\ug\in\R\), let \(\og\in(\ug,\infp)\), let \(m\in\NO\), and let  \(\seqs{m}\in\Fggqu{m}\).
 Then  \(\seqs{\ell} \in  \Fggqu{\ell}\) for all \(\ell\in\mn{0}{m}\).
\eprop
\bproof
 In view of \rthm{LHP} we have \(\MggqFs{m}\neq\emptyset\).
 Let \(\ell\in\mn{0}{m}\).
 Then \(\MggqFs{m}\subseteq\MggqFs{\ell}\).
 Hence \(\MggqFs{\ell}\neq\emptyset\).
 Thus, again applying \rthm{LHP}, we get \(\seqs{\ell} \in  \Fggqu{\ell}\).
\eproof

\bcorl{C1754}
 Let \(\ug\in\R\), \(\og\in(\ug,\infp)\), and \(m\in\NO\).
 Then \(\Fggequ{m}\subseteq\Fggqu{m}\).
\ecor
\bproof
 Use \rprop{P1750}.
\eproof

 \rprop{P1750} leads us to the following notions:
\bdefnl{D0846}
 Let \(\ug\in\R\), let \(\og\in(\ug,\infp)\), and let \(\seqsinf\) be a sequence from \(\Cqq\).
 Then \(\seqsinf\) is called \noti{\tabHnnd{}}{\habHnnd{}} (resp.\ \noti{\tabHpd{}}{\habHpd{}}) if for all \(m\in\NO\) the sequence \(\seqs{m}\) is \tabHnnd{} (resp.\ \tabHpd{}).
 The notation \symba{\Fggqinf}{f} (resp.\ \symba{\Fgqinf}{f}) stands for set of all \tabHnnd{} (resp.\ \tabHpd{}) sequences \(\seqsinf\) from \(\Cqq\).
\edefn

\bleml{L0703}
 Let \(\ug\in\R\), let \(\og\in(\ug,\infp)\), and let \(\sigma\in\MggqF\).
 Then \(\sigma\in\MggquF{\infi}\) and the sequence \(\seq{\suo{j}{\sigma}}{j}{0}{\infi}\) given via \eqref{mom} belongs to \(\Fggqinf\).
\elem
\bproof
 By the choice of \(\sigma\) we have \(\sigma\in\bigcap_{m=0}^\infi\MggquF{m}\).
 Thus, \rthm{LHP} implies \(\seq{\suo{j}{\sigma}}{j}{0}{m}\in\Fggqu{m}\) for each \(m\in\NO\).
 Hence \(\seq{\suo{j}{\sigma}}{j}{0}{\infi}\in\Fggqinf\).
\eproof

\bpropl{P0710}
 Let \(\ug\in\R\), let \(\og\in(\ug,\infp)\), let \(m\in\NO\), and let \(\seqs{m}\in\Fggqu{m}\).
 Then there is a sequence \(\seq{\su{j}}{j}{m+1}{\infi}\) from \(\Cqq\) such that \(\seqsinf\in\Fggqinf\).
\eprop
\bproof
 In view of \rthm{LHP} we have \(\MggqFs{m}\neq\emptyset\).
 Let \(\sigma\in\MggqFs{m}\).
 In view of \rlem{L0703}, we have then \(\sigma\in\MggquF{\infi}\), \(\seq{\suo{j}{\sigma}}{j}{0}{\infi}\in\Fggqinf\), and \(\seq{\suo{j}{\sigma}}{j}{0}{m}=\seqs{m}\).
\eproof

\bthml{T0718}
 Let \(\ug\in\R\), let \(\og\in(\ug,\infp)\), and let \(m\in\NO\).
 Then \(\Fggequ{m}=\Fggqu{m}\).
\ethm
\bproof
 Combine \rcor{C1754} and \rpropss{P0710}{P1750}.
\eproof
 Comparing \rthm{T0718} with \rrem{R1451}, we see that both statements are completely different.
 Against to the background of \rthmss{LHP}{T1.2} we can now immediately see the reason for this phenomenon, namely, in view of \eqref{ab8}, we have \(\MggqF=\MggquF{\infi}\), whereas on the other hand it can be easily checked that, for each \(k\in\N\), the proper inclusion \(\MggquR{k+1}\subset\MggquR{k}\) is satisfied.

 In view of \rlem{L0703}, we will consider the following problem:
\begin{description}
 \item[\pproblem{\ab }{m}{=}] Let \(\ug\in\R\), let \(\og\in(\ug,\infp)\), let \(m\in\NO \), and let \(\seqs{m}\) be a sequence from \(\Cqq\).
 Describe the set\index{p@\(\nextmom{m}\defeq\setaa{\int_{\ab} x^{m+1}\sigma(\dif x)}{\sigma\in\MggqFs{m}}\)}
\beql{SGG}
 \Nextmom{m}
 \defeq\setaa*{\int_{\ab} x^{m+1}\sigma(\dif x)}{\sigma\in\MggqFS{m}}.
\eeq
\end{description}

\bpropl{P0756}
 Let \(\ug\in\R\), let \(\og\in(\ug,\infp)\), let \(m\in\NO \), and let \(\seqs{m}\) be a sequence from \(\Cqq\). Then \(\nextmom{m}\neq\emptyset\) if and only if \(\seqs{m}\in\Fggqu{m}\). In this case, \(\nextmom{m}=\setaa{\su{m+1}\in\Cqq}{\seqs{m+1}\in\Fggqu{m+1}}\).
\eprop
\bproof
 Let \(\mathcal{P}\defeq\nextmom{m}\).
 First Assume \(\mathcal{P}\ne\emptyset\).
 Let \(s\in \mathcal{P}\).
 Then there is a \(\sigma\in\MggqFs{m}\) such that \(\int_{\ab}x^{m+1}\sigma(\dif x) = s\).
 In particular, \(\MggqFs{m}\neq\emptyset\).
 Thus, \(\seqs{m}\in\Fggqu{m}\) according to \rthm{LHP}.
 Furthermore, setting \(\su{m+1} \defeq  s\), we have  \(\sigma\in\MggqFs{m+1}\).
 Again, \rthm{LHP} yields \(\seqs{m+1}\in\Fggqu{m+1}\).

 Now suppose that \(\seqs{m}\in\Fggqu{m}\).
 Then \(\MggqFs{m}\ne \emptyset\) by virtue of \rthm{LHP}.
 Let \(\sigma\in\MggqFs{m}\).
 In view of \rlem{L0703}, then \(\int_{\ab} x^{m+1}\sigma(\dif x)\in \mathcal{P}\) follows.
 In particular, \(\mathcal{P}\neq\emptyset\).
 Finally, we consider an arbitrary \(\su{m+1}\in\Cqq\) with \(\seqs{m+1}\in\Fggqu{m+1}\).
 Using \rthm{LHP}, we get  \(\MggqFs{m+1}\ne \emptyset\).
 Let \(\sigma\in\MggqFs{m+1}\).
 Then \(\sigma\in\MggqFs{m}\) and \(\su{m+1}=\int_{\ab} x^{m+1}\sigma(\dif x)\).
 Consequently, \(\su{m+1}\in\mathcal{P}\).
\eproof

 \rprop{P0756} leads us to the problem of describing the set \(\setaa{\su{m+1}\in\Cqq}{\seqs{m+1}\in\Fggqu{m+1}}\).

\begin{rem}\label{R5.2.}
 Let  \(\alpha,\beta\in \C\), let \(\kappa\in\Ninf\), and let \(\seqska \) be a sequence from \(\Cpq\).
 Then
 \begin{align*}
  \ba  s_j&= \sau{j} + \sbu{j},&
 &\text{and}&
  \ba \su{j+1}&= \beta \sau{j} + \alpha \sbu{j}, 
 \end{align*}
 for all \(j\in\mn{0}{\kappa -1}\).
 Furthermore, if \(\kappa \ge 2\), then
 \begin{align*}
  \scu{j}&= -\alpha \sbu{j} + \sbu{j+1}=\rk*{\fourIdx{}{\ug}{}{}{\rk{\fourIdx{}{}{}{\og}{s}}}}_j,&
  \scu{j}&= \beta \sau{j} - \sau{j+1}=\rk*{\fourIdx{}{}{}{\og}{\rk{\fourIdx{}{\ug}{}{}{s}}}}_j,
 \end{align*}
 and \(\ba \su{j+2} = \beta^2 \sau{j} +\alpha^2 \sbu{j} - \ba  \scu{j}\) for all \(j\in\mn{0}{\kappa-2}\).
\end{rem}

\begin{rem}\label{R1542}
 Let  \(\alpha,\beta\in \C\), let \(\kappa\in\Ninf\), and let \(\seqska \) be a sequence from \(\Cpq\).
 Then \(\Hau{n}=-\ug\Hu{n}+\Ku{n}\) and \(\Hbu{n}= \og\Hu{n} -\Ku{n}\) for all \(n\in\NO\) with \(2n+1\leq\kappa\).
 Furthermore, if \(\kappa \ge 2\), then \(\Hcu{n} = -\ug\og\Hu{n}+\rk{\ug+\og}\Ku{n}-\Gu{n}\) for all \(n\in\NO\) with \(2n+2\leq\kappa\).
\end{rem}
 
 Using \rrem{R1542}, we obtain:
\breml{R1539}
 Let  \(\alpha,\beta\in \C\), let \(\kappa\in\Ninf\), and let \(\seqska \) be a sequence from \(\Cpq\).
 Then
 \(
  \ba\Hu{n}=\Hau{n}+\Hbu{n}
 \) and \(
  \ba\Ku{n}= \og\Hau{n} + \ug\Hbu{n}
 \)
 for all \(n\in\NO\) with \(2n+1\leq\kappa\).
 Furthermore, if \(\kappa \ge 2\), then
 \begin{align*}
  \Hcu{n}&= -\ug\Hbu{n} +\Kbu{n}=\rk*{\fourIdx{}{\ug}{}{}{\rk{\fourIdx{}{}{}{\og}{H}}}}_n,\\
  \Hcu{n}&= \og\Hau{n}-\Kau{n}=\rk*{\fourIdx{}{}{}{\og}{\rk{\fourIdx{}{\ug}{}{}{H}}}}_n,
 \end{align*}
 and \(\ba\Gu{n}= \og^2\Hau{n}+\ug^2\Hbu{n}- \ba\Hcu{n}\) for all \(n\in\NO\) with \(2n+2\leq\kappa\).
\erem

\bleml{L1307}
 Let \(\ug\in\R\), let \(\og\in(\ug,\infp)\), let \(n\in\NO\), and let \(\seqs{2n+1}\) be a sequence from \(\Cqq\). If \(\seqs{2n+1}\in\Fggqu{2n+1}\) (resp.\ \(\seqs{2n+1}\in\Fgqu{2n+1}\)), then \(\seqs{2n}\in\Hggqu{2n}\) (resp.\ \(\seqs{2n}\in\Hgqu{2n}\)).
\elem
\begin{proof}
 By virtue of \rrem{R1539}, we have \(\Hu{n}=\ba^\inv\ek{\Hau{n}+\Hbu{n}}\). In view of \rdefnp{D1159}{D1159.a} and \rdefn{D1238}, the assertions follow.
\end{proof}

\bleml{L0923}
 Let \(\ug\in\R\) and \(\og\in(\ug,\infp)\).
 Then \(\Fggqinf\subseteq\Hggqinf\) and \(\Fgqinf\subseteq\Hgqinf\).
\elem
\begin{proof}
 This follows in view of \rdefn{D0846} from \rlem{L1307}.
\end{proof}

\bleml{L1319}
 Let \(\ug\in\R\), let \(\og\in(\ug,\infp)\), let \(m\in\NO\), and let \(\seqs{m}\in\Fggqu{m}\).
 Then \(\su{j}\in\CHq\) for all \(j\in\mn{0}{m}\) and \(\su{2k}\in\Cggq\) for all \(k\in\NO\) with \(2k\leq m\).
\elem
\bproof
 In the case \(m=2n\) with some \(n\in\NO\), the assertion follows from \rdefn{D1159} and \rremp{L1251}{L1251.a}.
 
 Now suppose that \(m=2n+1\) for some \(n\in\NO\).
 By virtue of \rlem{L1307}, then \(\seqs{2n}\in\Hggqu{2n}\).
 According to \rremp{L1251}{L1251.a}, we obtain $\su{j}\in\CHq\) for all \(j\in\mn{0}{2n}\) and \(\su{2k}\in\Cggq\) for all \(k\in\mn{0}{n}\).
 In view of \rdefnp{D1159}{D1159.a}, furthermore \(\seqsa{2n}\in\Hggqu{2n}\).
 Because of \rremp{L1251}{L1251.a}, in particular \(\sau{2n}\in\CHq\).
 Since the matrix \(\su{2n+1}\) admits the representation \(\su{2n+1}=\sau{2n}+\ug\su{2n}\), we see that \(\su{2n+1}\in\CHq\) holds true as well.
\eproof

\bleml{R0936}
 Let \(\kappa\in\NOinf\), let \(\seqska\) be a sequence from \(\Cpq\), and let the sequence \(\seqr{\kappa}\) be given by \(r_j\defeq(-1)^j\su{j}\).
 Then \(\Huo{n}{r}=\Jp{n}\Hu{n}\Jq{n}\) for all \(n\in\NO\) with \(2n\leq\kappa\) and \(\Kuo{n}{r}=-\Jp{n}\Ku{n}\Jq{n}\) for all \(n\in\NO\) with \(2n+1\leq\kappa\) and \(\Guo{n}{r}=\Jp{n}\Gu{n}\Jq{n}\) for all \(n\in\NO\) with \(2n+2\leq\kappa\).
\elem
\bproof
 Apply \rlemp{L0937}{L0937.a}.
\eproof

\bleml{L1338}
 Let \(\kappa\in\NOinf\), let \(\seqska\) be a sequence from \(\Cpq\), and let the sequence \(\seqr{\kappa}\) be given by \(r_j\defeq(-1)^j\su{j}\).
 Then:
 \benui
  \il{L1338.a} \(\Tripuo{n}{r}=\Trip{n}\) for all \(n\in\NO\) with \(2n-1\leq\kappa\).
  \il{L1338.b} \(\Muo{n}{r}=-\Mu{n}\) and \(\Luo{n}{r}=\Lu{n}\) for all \(n\in\NO\) with \(2n\leq\kappa\).
 \eenui
\elem
\bproof
 In view of \rnota{N41}, we have \(\Tripuo{0}{r}=\Opq=\Trip{0}\) and \(\Muo{0}{r}=\Opq=-\Mu{0}\).
 
 Suppose that \(n\in\N\) with \(2n-1\leq\kappa\).
 We have \(\zuuo{n}{2n-1}{r}=(-1)^n\zuu{n}{2n-1}\Jq{n-1}\) and \(\yuuo{n}{2n-1}{r}=(-1)^n\Jp{n-1}\yuu{n}{2n-1}\).
 \rlem{R0936} and \rremss{R0950}{MA.7.3.} yield \(\rk{\Huo{n-1}{r}}^\mpi=\rk{\Jp{n-1}\Hu{n-1}\Jq{n-1}}^\mpi=\Jq{n-1}\rk{\Hu{n-1}}^\mpi\Jp{n-1}\).
 In view of \rrem{R0950}, we obtain \(\Tripuo{n}{r}=(-1)^{2n}\zuu{n}{2n-1}\Hu{n-1}^\mpi\yuu{n}{2n-1}=\Trip{n}\).
 
 Now suppose that \(n\in\N\) with \(2n\leq\kappa\).
 We have then furthermore \(\yuuo{n+1}{2n}{r}=(-1)^{n+1}\Jp{n-1}\yuu{n+1}{2n}\).
 Using \rrem{R0950}, we obtain
 \(\Muo{n}{r}=(-1)^{2n+1}\zuu{n}{2n-1}\Hu{n-1}^\mpi\yuu{n+1}{2n}=-\Mu{n}\).
 The rest follows from~\eqref{L1338.a}.
\eproof

\bleml{L1229}
 Let \(\ug,\og\in\C\), let \(m\in\N\), and let \(\seqs{m}\) be a sequence from \(\Cpq\).
 Let the sequence \(\seqr{m}\) be given by \(r_j\defeq(-1)^j\su{j}\).
 \benui
  \il{L1229.a} Let \(m=2n+1\) with some \(n\in\NO\).
 Then
  \begin{align*}
   -\rk{-\og}\Huo{n}{r}+\Kuo{n}{r}
   &=\Jp{n}^\ad\Hbu{n}\Jq{n}&
   &\text{and}&
   \rk{-\ug}\Huo{n}{r}-\Kuo{n}{r}
   &=\Jp{n}^\ad\Hau{n}\Jq{n}.
  \end{align*}
  \il{L1229.b} Let \(m=2n\) with some \(n\in\N\).
 Then
  \[
   -\rk{-\og}\rk{-\ug}\Huo{n-1}{r}+\ek*{\rk{-\og}+\rk{-\ug}}\Kuo{n-1}{r}-\Guo{n-1}{r}\\
   =\Jp{n-1}^\ad\Hcu{n-1}\Jq{n-1}.
  \]
 \eenui
\elem
\bproof
 \eqref{L1229.a} According to \rlem{R0936} and \rrem{R0950}, we have \(\Huo{n}{r}=\Jp{n}^\ad\Hu{n}\Jq{n}\) and \(\Kuo{n}{r}=-\Jp{n}^\ad\Ku{n}\Jq{n}\).
 Hence,
 \(
  -\rk{-\og}\Huo{n}{r}+\Kuo{n}{r}
  =\og \Jp{n}^\ad\Hu{n}\Jq{n}-\Jp{n}^\ad\Ku{n}\Jq{n}
  =\Jp{n}^\ad\Hbu{n}\Jq{n}
 \)
 and, analogously, 
 \(
  \rk{-\ug}\Huo{n}{r}-\Kuo{n}{r}
  =\Jp{n}^\ad\Hau{n}\Jq{n}
 \).
 
 \eqref{L1229.b}  According to \rlem{R0936} and \rrem{R0950}, we have \(\Huo{n-1}{r}=\Jp{n-1}^\ad\Hu{n-1}\Jq{n-1}\), \(\Kuo{n-1}{r}=-\Jp{n-1}^\ad\Ku{n-1}\Jq{n-1}\), and \(\Guo{n-1}{r}=\Jp{n-1}^\ad\Gu{n-1}\Jq{n-1}\).
 Hence,
 \begin{multline*}
  -\rk{-\og}\rk{-\ug}\Huo{n-1}{r}+\ek*{\rk{-\og}+\rk{-\ug}}\Kuo{n-1}{r}-\Guo{n-1}{r}\\
  =-\og\ug \Jp{n-1}^\ad\Hu{n-1}\Jq{n-1}+\rk{\og+\ug}\Jp{n-1}^\ad\Ku{n-1}\Jq{n-1}-\Jp{n-1}^\ad\Gu{n-1}\Jq{n-1}\\
  =\Jp{n-1}^\ad\Hcu{n-1}\Jq{n-1}.\qedhere
 \end{multline*}
\eproof

 The following technical result is useful for our further purposes.
\bleml{L1039}
 Let \(\ug\in\R\), let \(\og\in(\ug,\infp)\), let \(m\in\NO\), and let \(\seqs{m}\) be a sequence from \(\Cqq\).
 Let the sequence \(\seqr{m}\) be given by \(r_j\defeq(-1)^j\su{j}\).
 Then:
 \benui
  \il{L1039.a} \(\seqs{m}\in\Fggqu{m}\) if and only if \(\seqr{m}\in\Fgguuuu{q}{m}{-\og}{-\ug}\).
  \il{L1039.b} \(\seqs{m}\in\Fgqu{m}\) if and only if \(\seqr{m}\in\Fguuuu{q}{m}{-\og}{-\ug}\).
  \il{L1039.c} \(\seqs{m}\in\Fggequ{m}\) if and only if \(\seqr{m}\in\Fggeuuuu{q}{m}{-\og}{-\ug}\).
  \il{L1039.d} \(\seqs{m}\in\Fgequ{m}\) if and only if \(\seqr{m}\in\Fgeuuuu{q}{m}{-\og}{-\ug}\).
 \eenui
\elem
\bproof
 \eqref{L1039.a} The case \(m=0\) is trivial.
 Assume that \(m=2n+1\) with some \(n\in\NO\).
 If \(\seqs{2n+1}\in\Fggqu{2n+1}\), then \(\set{\Hau{n},\Hbu{n}}\subseteq\Cggo{(n+1)q}\).
 Hence, \(\set{-\rk{-\og}\Huo{n}{r}+\Kuo{n}{r},\rk{-\ug}\Huo{n}{r}-\Kuo{n}{r}}\subseteq\Cggo{(n+1)q}\) follows from \rlemp{L1229}{L1229.a}.
 Thus, \(\seqr{2n+1}\in\Fgguuuu{q}{2n+1}{-\og}{-\ug}\).
 If \(\seqr{2n+1}\in\Fgguuuu{q}{2n+1}{-\og}{-\ug}\), then, in view of \(\seqs{2n+1}=\seq{(-1)^jr_j}{j}{0}{2n+1}\), we get from the already proved implication that \(\seqs{2n+1}\in\Fggqu{2n+1}\). 
 Assume that \(m=2n\) with some \(n\in\N\).
 If \(\seqs{2n}\in\Fggqu{2n}\), then the matrices \(\Hu{n}\) and \(\Hcu{n-1}\) are \tnnH{}.
 From \(\Huo{n}{r}=\Jq{n}^\ad\Hu{n}\Jq{n}\) and \rlemp{L1229}{L1229.b} we see that the matrices \(\Huo{n}{r}\) and \(-\rk{-\og}\rk{-\ug}\Huo{n-1}{r}+\ek*{\rk{-\og}+\rk{-\ug}}\Kuo{n-1}{r}-\Guo{n-1}{r}\) are \tnnH{}.
 Hence, \(\seqr{2n}\in\Fgguuuu{q}{2n}{-\og}{-\ug}\).
 If \(\seqr{2n}\in\Fgguuuu{q}{2n}{-\og}{-\ug}\), then, by \(\seqs{2n}=\seq{(-1)^jr_j}{j}{0}{2n}\), we get from the already proved implication that \(\seqs{2n}\in\Fggqu{2n}\).
 
 \eqref{L1039.b} This can be proved analogously to~\eqref{L1039.a} using \(\det\Jq{n}\neq0\).
 
 \eqref{L1039.c}--\eqref{L1039.d} The assertions of~\eqref{L1039.c} and~\eqref{L1039.d} follow immediately from~\eqref{L1039.a} and~\eqref{L1039.b}, resp.
\eproof

\section{On the \hcCDm{} associated with a sequence belonging to $\Fggqu{2n-1}$}\label{S1626}
 In our joint paper~\zita{MR2735313} with Yu.~M.~Dyukarev, we constructed explicitly a molecular solution of Problem~\mproblem{\ab}{2n-1}{=}.
 Now we are looking for an appropriate modification of our method used in~\zita{MR2735313}, which allows us to handle Problem~\mproblem{\ab}{2n}{=} in a similar way as in~\zita{MR2735313}.
 First we sketch some essential features of our construction of a distinguished molecular solution of~\mproblem{\ab}{2n-1}{=} in~\zita{MR2735313}.
 
\bthml{T1002}
 Let \(n\in\N\) and let \(\seqs{2n-1}\in\Hggequ{2n-1}\).
 Then:
 \benui
  \il{T1002.a} There is a unique sequence \(\seq{\su{k}}{k}{2n}{\infi}\) from \(\Cqq\) such that \(\seqsinf\) is a \tHd{} \tHnnd{} sequence of order \(n\).
  \il{T1002.b} The set \(\MggqRsg{\infi}\) contains exactly one element \(\cdm{n}\).
  \il{T1002.c} The measure \(\cdm{n}\) is molecular.
 \eenui
\ethm
\bproof
 From~\zitaa{MR2570113}{\cprop{2.38}} we get~\eqref{T1002.a}. 
 In view of~\eqref{T1002.a} the assertions of~\eqref{T1002.b} and~\eqref{T1002.c} follow from parts~(a) and~(b) of \cprop{4.9} in~\zita{MR2570113}, resp.
\eproof

 If \(n\in\N\) and if \(\seqs{2n-1}\in\Hggequ{2n-1}\), then the measure \symba{\cdm{n}}{s} given via \rthmp{T1002}{T1002.b} is called the \notii{\tHd{} \tnnH{} measure} (short \notii{\tCDm{}}) associated with \(\seqs{2n-1}\).
 (For additional information, we refer to~\zitaa{MR2570113}{\cchapss{4}{5}}.)
 In~\zita{MR2735313}, we found a molecular solution for Problem~\mproblem{\ab}{2n-1}{=} in the case of a sequence \(\seqs{2n-1}\in\Fggqu{2n-1}\). 
 This will be explained now in more detail.
\bthml{T1019}
 Let \(\ug\in\R\), let \(\og\in(\ug,\infp)\), let \(n\in\N\), and let \(\seqs{2n-1}\in\Fggqu{2n-1}\).
 Then \(\seqs{2n-1}\in\Hggequ{2n-1}\). Moreover, the \tCDm{} \(\cdm{n}\) associated with \(\seqs{2n-1}\) fulfills \(\cdma{n}{\R\setminus\ab}=\Oqq\) and the measure \(\cdablm{2n-1}\defeq\Rstr_\BorF\cdm{n}\) is molecular and satisfies \(\cdablm{2n-1}\in\MggqFs{2n-1}\).
\ethm
\bproof
 Because of~\zitaa{MR2735313}{\clem{2.21}}, the sequence \(\seqs{2n-1}\) belongs to \(\Hggequ{2n-1}\).
 In view of \(\seqs{2n-1}\in\Fggqu{2n-1}\), we see from formulas~(1.2) and~(1.3) in~\zita{MR2735313} that in~\zitaa{MR2735313}{\cthm{1.1}} we can particularly choose \(\Omega=\ab\).
 Then from~\zitaa{MR2735313}{Proof of \cthm{1.1}, \cpage{921}} we see that \(\cdma{n}{\R\setminus\ab}=\Oqq\).
 From \rthmp{T1002}{T1002.c} we see that \(\cdm{n}\) is molecular.
 In view of \(\cdma{n}{\R\setminus\ab}=\Oqq\) and \rthmp{T1002}{T1002.b}, we obtain the rest.
\eproof

\bpropl{P1050}
 Let \(\ug\in\R\), let \(\og\in(\ug,\infp)\), let \(n\in\NO\), and let \(\seqs{2n}\in\Fggequ{2n}\).
 Further, let \(\su{2n+1}\in\Cqq\) be such that \(\seqs{2n+1}\in\Fggqu{2n+1}\).
 Then \(\cdablm{2n+1}\) given in \rthm{T1019} belongs to \(\MggqFs{2n}\cap\MggqmolF\).
\eprop
\bproof
 Use \rthm{T1019}.
\eproof

\bcorl{C1301}
 Let \(\ug\in\R\), let \(\og\in(\ug,\infp)\), let \(n\in\NO\), and let \(\seqs{2n}\in\Fggequ{2n}\).
 Then \(\MggqFs{2n}\cap\MggqmolF\neq\emptyset\).
\ecor
\bproof
 In view of \rdefn{D1609}, we have \(\setaa{\su{2n+1}\in\Cqq}{\seqs{2n+1}\in\Fggqu{2n+1}}\neq\emptyset\).
 Now the assertion follows from \rprop{P1050}.
\eproof

 In the situation of \rprop{P1050}, we see that each matrix \(\su{2n+1}\in\Cqq\) such that \(\seqs{2n+1}\in\Fggqu{2n+1}\) generates a molecular solution of Problem~\mproblem{\ab}{2n}{=}.
 In order to get more information about this family of molecular measures, we are led to the problem of describing the set
 \(
  \setaa{\su{2n+1}\in\Cqq}{\seqs{2n+1}\in\Fggqu{2n+1}}
 \).
 This problem is of purely algebraic nature as well as our method to handle it. %
 We will carefully study the intrinsic structure of finite \tabHnnd{} sequences.
 More precisely, if
 \(m\in\N\) and if \(\seqs{m}\in\Fggqu{m}\), then we will show in \rsec{S0747} that, for each \(k\in\mn{1}{m}\), the matrix \(\su{k}\) belongs to some matrix interval \([\umg{k-1},\omg{k-1}]\), where \(\umg{k-1}\) and \(\omg{k-1}\) will be explicitly computed in terms of the sequence \(\seqs{k-1}\).
 Particular attention is drawn to the case that \(\su{k}\) coincides with one of the interval ends \(\umg{k-1}\) or \(\omg{k-1}\).

\section{On \hHnnde{} sequences}\label{S1105}
 The class of \tabHnnd{} sequences stands in the center of this paper.
 If we look back at \rdefn{D1159}, where we have introduced it, then we see immediately that such a sequence is determined by the interplay of two \tHnnd{} sequences.
 Hence, our further considerations will be mainly based on the theory of \tHnnd{} sequences as well as the theory of \tHnnde{} sequences.
 For this reason, in this section we sketch some features of this theory, which turn out to be important for our further considerations.

\blemnl{\zitaa{MR3014199}{\clem{3.1(a)--(d)}}}{L1355}
 Let \(\kappa\in\NOinf\) and let \(\seqs{\kappa}\in\Hggequ{\kappa}\).
 Then:
 \benui
  \il{L1355.a} \(\su{j}\in\CHq\) for all \(j\in\mn{0}{\kappa}\) and \(\su{2k}\in\Cggq\) for all \(k\in\NO\) with \(2k\leq\kappa\).
  \il{L1355.b} \(\bigcup_{j=2k}^\kappa\ran{\su{j}}\subseteq\ran{\su{2k}}\) and \(\nul{\su{2k}}\subseteq\bigcap_{j=2k}^\kappa\nul{\su{j}}\) for all \(k\in\NO\) with \(2k\leq\kappa\).
 \eenui
\elem

\blemnl{\cf{}~\zitaa{MR2570113}{\clem{2.7}}}{L0920}
 Let \(n\in\N\).
 Then
 \[
  \Hggequ{2n}
  =\setaa*{\seqs{2n}\in\Hggqu{2n}}{\ran{\yuu{n+1}{2n}}\subseteq\ran{\Hu{n-1}}}.
 \]
\elem

\breml{L1211}
 For each \(n\in\NO\) we see from~\zitaa{MR2570113}{\cprop{2.24}} that \(\Hgequ{2n}=\Hgqu{2n}\).
\erem

\bpropnl{\cf{}~\zitaa{MR2570113}{\cprop{2.13}}}{P1333}
 Let \(n\in\N\).
 Then
 \[
  \Hggequ{2n}
  =\setaa*{\seqs{2n}\in\Hggqu{2n}}{\nul{\Lu{n-1}}\subseteq\nul{\Lu{n}}}.
 \]
\eprop

 \rprop{P1333} indicates an essential difference between the set \(\Hggequ{2n}\) and the set of finite \tnn{} definite sequences from \(\Cqq\).
 \rprop{P1333} tells us that there is an inclusion of the null spaces of the consecutive associated Schur complements introduced in \rnotap{N41}{N41.c}, whereas~\zitaa{MR885621}{\clem{6}} shows that the corresponding consecutive Schur complements associated with a \mbox{(Toeplitz-)non-negative} definite sequence from \(\Cqq\) are even decreasing with respect to the L\"owener semi-ordering in the set \(\CHq\).
 Clearly, this implies the inclusion for the corresponding null spaces.
 
\blemnl{\cf{}~\zitaa{MR2570113}{\clem{2.16}}}{L0913}
 Let \(n\in\NO\) and let \(\seqs{2n+1}\) be a sequence from \(\Cqq\).
 Then \(\seqs{2n+1}\in\Hggequ{2n+1}\) if and only if \(\seqs{2n}\in\Hggqu{2n}\), \(\su{2n+1}^\ad=\su{2n+1}\), and \(\ran{\yuu{n+1}{2n+1}}\subseteq\ran{\Hu{n}}\).
\elem

\breml{L1034}
 Let \(n\in\NO\) and let \(\seqs{2n+1}\) be a sequence from \(\Cqq\).
 Then from~\zitaa{MR2570113}{\cprop{2.24}} we see that \(\seqs{2n+1}\in\Hgequ{2n+1}\) if and only if \(\seqs{2n}\in\Hgqu{2n}\) and \(\su{2n+1}^\ad=\su{2n+1}\).
\erem

\bpropnl{\cf{}~\zitaa{MR2570113}{\cprop{2.22(a)}}}{P0906}
 Let \(n\in\NO\) and let \(\seqs{2n+1}\in\Hggequ{2n+1}\).
 Then
 \[
  \setaa*{\su{2n+2}\in\Cqq}{\seqs{2n+2}\in\Hggqu{2n+2}}
  =\setaa*{X\in\CHq}{X\lgeq\Trip{n+1}}.
 \]
\eprop

\bpropl{P1033}
 Let \(n\in\NO\) and let \(\seqs{2n+1}\in\Hgequ{2n+1}\).
 Then
 \[
  \setaa*{\su{2n+2}\in\Cqq}{\seqs{2n+2}\in\Hgqu{2n+2}}
  =\setaa*{X\in\CHq}{X>\Trip{n+1}}.
 \]
\eprop
\bproof
 This follows from \rrem{L1034} and~\zitaa{MR2570113}{\cprop{2.24}}.
\eproof

 We denote by \symba{\Dpqu{\kappa}}{d} the set of all sequences \(\seqska \) from \(\Cpq\) which fulfill \(\bigcup_{j=0}^\kappa \cR (s_j)\subseteq \cR (\su{0})\) and \(\cN (\su{0})\subseteq \bigcap_{j=0}^\kappa \cN (s_j)\).

\bpropnl{\zitaa{MR3014199}{\cprop{4.24}}}{R0957}
 Let \(m\in\NO\).
 Then \(\Hggequ{m}\subseteq\Dqqu{m}\).
\eprop

\section{Algebraic approach to \rprop{P1750} on the sections of \habHnnd{} sequences}\label{S1306}
 A main theme of our following considerations is to gain more insights into the intrinsic structure of \tabHnnd{} sequences.
 Such sequences are characterized by a particular interplay between some \tHnnd{} sequences.
 More precisely, we have to organize the interplay between several \tnnH{} block Hankel matrices.
 The difficulty of the situation results from the fact that the block Hankel matrix of step \(n\) is not a principle submatrix of the block Hankel matrix of stage \(n+1\).
 It will turn out (see \rlem{M8.1.18.}) that the parallel sum of matrices (see~\rapp{A1542}) will be the essential tool to overcome this difficulty.
 
 The construction of a purely algebraic approach to \rprop{P1750} stands in the center of this section.
 Our proof of \rprop{P1750} above is based on \rthm{LHP}, which gives a characterization of the solvability of the truncated \tabH{} moment problem.
 Since \rprop{P1750} is a statement of purely algebraic character, for our further purposes it is advantageous to have an independent algebraic proof as well.
 
 The key instrument of our purely algebraic approach to \rthm{T0718} is to find convenient expressions for the relevant block Hankel matrices in terms of parallel sums of matrices and applying \rlem{MA.8.7*.}.
 For every choice of complex \tpqa{matrices} \(A\) and \(B\), let \( A\ps  B\) be the \notii{parallel sum of \(A\) and \(B\)}, \ie{}, let \index{\(A\ps  B \defeq  A (A+B)^\mpi B\)}
\[
 A\ps B
 \defeq A(A+B)^\mpi B.
\]
 
\breml{R1528}
 Let \(\kappa\in\NOinf\) and let \(\seqska \) be a sequence from \(\Cpq\).
 For each \(n\in\NO \) with \(2n\le\kappa\), then the block Hankel matrix \(\Hu{n}\) admits the block representations \(\Hu{n} =\row (\yuu{k}{k+n})_{k=0}^n\) and \(\Hu{n} = \col (\zuu{j}{j+n})_{j=0}^n\).
\erem
 For all \(n\in\N\), let \(\Delta_{q,n}\defeq\tmatp{\Iu{nq}}{\Ouu{q}{nq}}\)\index{d@\(\Delta_{q,n}\defeq\tmatp{\Iu{nq}}{\Ouu{q}{nq}}\)} and \(\nabla_{q,n}\defeq\tmatp{\Ouu{q}{nq}}{\Iu{nq}}\)\index{n@\(\nabla_{q,n}\defeq\tmatp{\Ouu{q}{nq}}{\Iu{nq}}\)}.
 
 In~\cite[proof of Lemma~2.5]{Thi06}, the following result (formulated for the square case \(p=q\)) was already proved.
 Now we give an alternative proof.

\begin{lem}\label{M8.1.18.}
 Let \(n\in\N\) and let \(\seqs{2n+1}\)  be a sequence from \(\Cpq\) such that
\begin{align}\label{CMM}
 \cR (\yuu{n+1}{2n+1})&\subseteq \cR (\Hu{n})&
 &\text{and}&
 \cN (\Hu{n})&\subseteq \cN (\zuu{n}{2n+1}).
\end{align}
 For every choice of complex numbers  \(\alpha\) and \(\beta\) with \(\alpha \ne\beta\), then 
\[%
 \Hcu{n-1}
 = \ba  \Delta_{p,n}^\ad  \ek*{ \Hau{n} \ps  \Hbu{n}} \Delta_{q,n}.
\]
\end{lem}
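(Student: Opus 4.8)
The plan is to reduce the parallel sum to an ordinary matrix product and then to exploit the block Hankel structure of $\Hu{n}$ via the matrices $\Delta_{p,n},\Delta_{q,n},\nabla_{p,n},\nabla_{q,n}$. First, by \rrem{R1539} we have $\Hau{n}+\Hbu{n}=\ba\Hu{n}$, and the hypothesis $\alpha\ne\beta$ guarantees $\ba\ne0$; since $\rk*{\lambda A}^\mpi=\lambda^\inv A^\mpi$ for every $\lambda\in\C\setminus\set{0}$, this gives $\rk*{\Hau{n}+\Hbu{n}}^\mpi=\ba^\inv\Hu{n}^\mpi$ and hence
\[
 \ba\,\Delta_{p,n}^\ad\ek*{\Hau{n}\ps\Hbu{n}}\Delta_{q,n}
 =\Delta_{p,n}^\ad\Hau{n}\Hu{n}^\mpi\Hbu{n}\Delta_{q,n}.
\]
So the whole assertion reduces to the purely algebraic identity $\Hcu{n-1}=\Delta_{p,n}^\ad\Hau{n}\Hu{n}^\mpi\Hbu{n}\Delta_{q,n}$.

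Next I would record the two elementary shift identities $\Ku{n}\Delta_{q,n}=\Hu{n}\nabla_{q,n}$ and $\Delta_{p,n}^\ad\Ku{n}=\nabla_{p,n}^\ad\Hu{n}$, both of which are immediate from the block decompositions of $\Hu{n}$ and $\Ku{n}$ recorded in \rrem{R1528}. Together with $\Hau{n}=-\alpha\Hu{n}+\Ku{n}$ and $\Hbu{n}=\beta\Hu{n}-\Ku{n}$ from \rrem{R1542}, they yield
\[
 \Delta_{p,n}^\ad\Hau{n}=\rk*{\nabla_{p,n}-\alpha\Delta_{p,n}}^\ad\Hu{n}
 \qquad\text{and}\qquad
 \Hbu{n}\Delta_{q,n}=\Hu{n}\rk*{\beta\Delta_{q,n}-\nabla_{q,n}}.
\]
Substituting both into the product above and then using $\Hu{n}\Hu{n}^\mpi\Hu{n}=\Hu{n}$ to remove $\Hu{n}^\mpi$ leaves $\rk*{\nabla_{p,n}-\alpha\Delta_{p,n}}^\ad\Hu{n}\rk*{\beta\Delta_{q,n}-\nabla_{q,n}}$.

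Finally I would expand this into its four summands and identify each with an $np\times nq$ corner block of $\Hu{n}$, namely $\Delta_{p,n}^\ad\Hu{n}\Delta_{q,n}=\Hu{n-1}$, $\Delta_{p,n}^\ad\Hu{n}\nabla_{q,n}=\nabla_{p,n}^\ad\Hu{n}\Delta_{q,n}=\Ku{n-1}$, and $\nabla_{p,n}^\ad\Hu{n}\nabla_{q,n}=\Gu{n-1}$; the expression then collapses to $-\alpha\beta\Hu{n-1}+\rk*{\alpha+\beta}\Ku{n-1}-\Gu{n-1}$, which equals $\Hcu{n-1}$ by \rrem{R1542}. The only genuine effort is the bookkeeping — keeping track of which of $\Delta$, $\nabla$ acts from the left and which from the right, and checking that the two shift identities hold verbatim in the rectangular case $p\ne q$; there is no analytic obstacle. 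Thanks to the reduction carried out in the first step, the argument uses neither the rank conditions \eqref{CMM} nor the parallel-sum lemmas of the appendix.
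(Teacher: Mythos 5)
Your argument is correct, and it is a genuine sharpening of the paper's proof rather than a reproduction of it. Both proofs begin with the same reduction \(\Hau{n}\ps\Hbu{n}=\frac{1}{\beta-\alpha}\Hau{n}\Hu{n}^\mpi\Hbu{n}\), but the paper then expands the middle product first, which produces the cross terms \(\Hu{n}\Hu{n}^\mpi\Ku{n}\) and \(\Ku{n}\Hu{n}^\mpi\Hu{n}\); these equal \(\Ku{n}\) only under the range and null space conditions \eqref{CMM}, and that is precisely where the hypothesis enters the paper's argument. You instead compress by \(\Delta_{p,n}^\ad\) and \(\Delta_{q,n}\) \emph{before} expanding: the shift identities \(\Delta_{p,n}^\ad\Ku{n}=\nabla_{p,n}^\ad\Hu{n}\) and \(\Ku{n}\Delta_{q,n}=\Hu{n}\nabla_{q,n}\) turn the outer factors into \(U\Hu{n}\) and \(\Hu{n}V\) with constant \(U\) and \(V\), so the only Moore--Penrose fact needed is the universally valid \(\Hu{n}\Hu{n}^\mpi\Hu{n}=\Hu{n}\), and \eqref{CMM} indeed becomes superfluous. (A quick scalar check with \(n=1\) and \(\Hu{1}\) singular, where \eqref{CMM} fails, confirms that the identity still holds.) What your route buys is therefore a strictly stronger statement: the conclusion of \rlem{M8.1.18.} holds for every sequence \(\seqs{2n+1}\) from \(\Cpq\) and all \(\alpha\neq\beta\). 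One cosmetic slip: for non-real \(\alpha\) the left factor should be written \(\nabla_{p,n}^\ad-\alpha\Delta_{p,n}^\ad\) rather than \(\rk{\nabla_{p,n}-\alpha\Delta_{p,n}}^\ad\), since the latter equals \(\nabla_{p,n}^\ad-\ko{\alpha}\Delta_{p,n}^\ad\) (compare the conjugates appearing in \rlem{M8.1.19.}); your final expansion uses the correct coefficients \(-\alpha\beta\), \(\alpha+\beta\), \(-1\), so nothing downstream is affected.
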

\begin{proof}
 Using \rrem{R1539}, we see that
\begin{align}\label{M8.1.9N}
\Hau{n}  &\ps  \Hbu{n} = \Hau{n} \ek*{ \Hau{n} + \Hbu{n}}^\mpi  \Hbu{n} = \frac{1}{\beta -\alpha} \Hau{n} \Hu{n}^\mpi  \Hbu{n}.
\end{align}
 Setting \(Y \defeq  \row (\yuu{k}{k+n})_{k=1}^n\) and \(Z \defeq  \col (\zuu{j}{j+n})_{j=1}^n\), from \rrem{R1528} we see that \(\Hu{n}\) and \(\Ku{n}\) can be represented via
\begin{align*}
 \Hu{n}&=\mat{\yuu{0}{n}, Y},&
 \Hu{n}&= \matp{\zuu{0}{n}}{Z},&
 \Ku{n}&=\mat{Y, \yuu{n+1}{2n+1}},&
 &\text{and}&
 \Ku{n}&=\matp{Z}{\zuu{n+1}{2n+1}}.
\end{align*}
 Thus, \eqref{CMM} implies \(\cR (\Ku{n}) \subseteq \cR (\Hu{n})\) and \(\cN (\Hu{n}) \subseteq \cN (\Ku{n})\).
 Consequently, \rremss{MA.7.8.}{A.7.8-1.}  provide us \(\Hu{n} \Hu{n}^\mpi  \Ku{n} = \Ku{n}\) and \(\Ku{n} \Hu{n}^\mpi  \Hu{n} = \Ku{n}\).
 Hence, in view of \rrem{R1542}, we get then
\begin{align}\label{ST1022N}
\Hau{n}  & \Hu{n}^\mpi  \Hbu{n} = (-\alpha \Hu{n} + \Ku{n})  \Hu{n}^\mpi  (\beta \Hu{n} - \Ku{n})\cr
& = -\alpha\beta \Hu{n} \Hu{n}^\mpi  \Hu{n} + \alpha \Hu{n} \Hu{n}^\mpi  \Ku{n} + \beta \Ku{n} \Hu{n}^\mpi  \Hu{n} - \Ku{n}\Hu{n}^\mpi  \Ku{n}\cr
&= -\alpha\beta \Hu{n} + (\alpha + \beta) \Ku{n} - \Ku{n} \Hu{n}^\mpi  \Ku{n}.
\end{align}
 Obviously, \(\Delta_{p,n}^\ad  \Ku{n} = Z = \nabla_{p,n}^\ad  \Hu{n}\) and \(\Ku{n} \Delta_{q,n} = Y = \Hu{n}\nabla_{q,n}\) and, consequently,
\begin{equation}\label{ZI}
\Delta_{p,n}^\ad  \Ku{n} \Hu{n}^\mpi  \Ku{n} \Delta_{q,n}
= \nabla_{p,n}^\ad  \Hu{n} \Hu{n}^\mpi  \Hu{n}\nabla_{q,n} 
= \nabla_{p,n}^\ad  \Hu{n} \nabla_{q,n} 
= \Gu{n-1}.
\end{equation}
 Furthermore, \rrem{R3240} yields \(\Delta_{p,n}^\ad  \Hu{n} \Delta_{q,n} =  \Hu{n-1}\) and \(\Delta_{p,n}^\ad  \Ku{n} \Delta_{q,n} = \Ku{n-1}\).
 Using the last two equations, \eqref{ZI}, \eqref{ST1022N}, and \eqref{M8.1.9N}, we get then
\[\begin{split}
 \Hcu{n-1}
 &=-\alpha\beta \Hu{n-1} + (\alpha +\beta) \Ku{n-1} - \Gu{n-1}\\
 &=\Delta_{p,n}^\ad  \ek*{ -\alpha\beta \Hu{n} + (\alpha +\beta)\Ku{n} - \Ku{n}\Hu{n}^\mpi  \Ku{n} }  \Delta_{q,n}\\
 &=\Delta_{p,n}^\ad  \Hau{n} \Hu{n}^\mpi  \Hbu{n}  \Delta_{q,n}
 =\ba \Delta_{p,n}^\ad \ek*{  \Hau{n}  \ps   \Hbu{n} }\Delta_{q,n}.\qedhere
\end{split}\]
\end{proof}

\breml{R1601}
 Let \(\kappa\in\NOinf\) and let \(\seqska \) be a sequence from \(\Cpq\).
 For each \(n\in\N\) with \(2n\le\kappa\), then the block Hankel matrix \(\Hu{n}\) admits the block representations
\begin{align*}
\Hu{n}&= \begin{bmatrix}
\Hu{n-1} & \yuu{n}{2n-1}\\
\zuu{n}{2n-1} &\su{2n}
\end{bmatrix},&
&&
\Hu{n}&= \begin{bmatrix}
\yuu{0}{n} & \Ku{n-1}\\
s_n & \zuu{n+1}{2n}
\end{bmatrix},\\
\Hu{n}& = \begin{bmatrix}
z_{0,n-1} & s_n\\
\Ku{n-1} & \yuu{n+1}{2n} 
\end{bmatrix},&
&\text{and}&
\Hu{n}&= \begin{bmatrix}
\su{0} & \zuu{1}{n}\\
\yuu{1}{n} & \Gu{n-1}
\end{bmatrix}.
\end{align*}
\erem

\begin{lem}\label{M8.1.19.}
 Let \(n\in\NO \) and let  \(\seqs{2n+2}\)  be a sequence from \(\Cpq\).
 For every choice of \(\alpha\) and \(\beta\) in \(\C\), then 
\begin{align}
 \ba  \Hau{n}& = \rk{\nabla_{p,n+1} - \ko{\alpha} \Delta_{p,n+1}}^\ad     \Hu{n+1} \rk{\nabla_{q,n+1} -\alpha  \Delta_{q,n+1}} +\Hcu{n} \label{D1109}
\intertext{and}
\ba  \Hbu{n}&= \rk{ \ko{\beta} \Delta_{p,n+1} -  \nabla_{p,n+1}}^\ad     \Hu{n+1} \rk{ \beta \Delta_{q,n+1} - \nabla_{q,n+1}} + \Hcu{n} .\label{D1110}
\end{align}
\end{lem}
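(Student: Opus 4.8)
The plan is to realize every block Hankel matrix occurring in \eqref{D1109} and \eqref{D1110} as a compression of the single matrix $\Hu{n+1}$ by the rectangular selection matrices $\Delta_{\cdot,n+1}$ and $\nabla_{\cdot,n+1}$, and then to reduce both identities to a short polynomial computation in $\alpha$ and $\beta$ resting on \rrem{R1542}. In contrast to \rlem{M8.1.18.}, no range or null space hypotheses on the sequence are needed here, since only honest matrix products — no Moore--Penrose inverses — enter; this is why the identities hold for all $\alpha,\beta\in\C$ and every sequence $\seqs{2n+2}$ from $\Cpq$.

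First I would record the four compression formulas
\[
 \Delta_{p,n+1}^\ad\Hu{n+1}\Delta_{q,n+1}=\Hu{n},\qquad
 \Delta_{p,n+1}^\ad\Hu{n+1}\nabla_{q,n+1}=\nabla_{p,n+1}^\ad\Hu{n+1}\Delta_{q,n+1}=\Ku{n},\qquad
 \nabla_{p,n+1}^\ad\Hu{n+1}\nabla_{q,n+1}=\Gu{n}.
\]
Indeed, $\Delta_{q,n+1}$ and $\nabla_{q,n+1}$ pick out, respectively, the first and the last $(n+1)q$ columns of a matrix with $(n+2)q$ columns, and analogously for the rows (with $p$ in place of $q$); hence each of these products is just the corresponding $(n+1)\times(n+1)$ block array of $\Hu{n+1}=\matauuo{s_{j+k}}{j,k}{0}{n+1}$. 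Equivalently, the formulas can be read off from the block representations of $\Hu{n+1}$ collected in \rremss{R3240}{R1601} (applied with $n+1$ in place of $n$). It is precisely this bookkeeping that needs a little care — matching the index shifts so that the ``top--left'' block is $\Hu{n}$, the two off-diagonal ones are $\Ku{n}$, and the ``bottom--right'' one is $\Gu{n}$ — and it is here that the exact length $2n+2$ of the prescribed sequence gets used, since this is what makes $\Gu{n}$ and $\Hcu{n}$ available.

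With these four formulas in hand, the rest is routine algebra. Expanding the right-hand side of \eqref{D1109} and using $(\ko{\alpha}\Delta_{p,n+1})^\ad=\alpha\Delta_{p,n+1}^\ad$, the four resulting summands combine to $\Gu{n}-2\alpha\Ku{n}+\alpha^2\Hu{n}$; adding $\Hcu{n}$ and then substituting $\Hcu{n}=-\alpha\beta\Hu{n}+(\alpha+\beta)\Ku{n}-\Gu{n}$ together with $\Hau{n}=-\alpha\Hu{n}+\Ku{n}$ from \rrem{R1542} leaves exactly $(\beta-\alpha)(-\alpha\Hu{n}+\Ku{n})=\ba\Hau{n}$. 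The argument for \eqref{D1110} is the mirror image: the expansion gives $\beta^2\Hu{n}-2\beta\Ku{n}+\Gu{n}$, and after adding $\Hcu{n}$ one is left with $(\beta-\alpha)(\beta\Hu{n}-\Ku{n})=\ba\Hbu{n}$, again by \rrem{R1542}. No separate treatment of the case $\alpha=\beta$ is needed, because the whole computation is a polynomial identity in $\alpha,\beta$ that never divides by $\ba$. Thus the only genuine obstacle is the first step — correctly identifying the four compressions of $\Hu{n+1}$ — after which nothing beyond elementary manipulation remains.
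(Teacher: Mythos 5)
Your proposal is correct and follows essentially the same route as the paper: both establish the four compressions $\Delta_{p,n+1}^\ad\Hu{n+1}\Delta_{q,n+1}=\Hu{n}$, $\Delta_{p,n+1}^\ad\Hu{n+1}\nabla_{q,n+1}=\nabla_{p,n+1}^\ad\Hu{n+1}\Delta_{q,n+1}=\Ku{n}$, $\nabla_{p,n+1}^\ad\Hu{n+1}\nabla_{q,n+1}=\Gu{n}$ via \rrem{R1601} and then reduce the claim to the polynomial identity coming from \rrem{R1542}. You merely write out the expansion that the paper leaves implicit, and your computations check out.
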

\begin{proof}
 Using \rrem{R1601}, we conclude \(\Delta_{p,n+1}^\ad  \Hu{n+1}  \Delta_{q,n+1} =    \Hu{n}\), \(\Delta_{p,n+1}^\ad  \Hu{n+1} \nabla_{q,n+1} = \Ku{n}\), \(\nabla_{p,n+1}^\ad  \Hu{n+1}\Delta_{q,n+1} =    \Ku{n}\), and  \(\nabla_{p,n+1}^\ad  \Hu{n+1} \nabla_{q,n+1} = G_n\).
 For each \(\alpha\in\C\) and each \(\beta\in\C\), this implies \eqref{D1109} and \eqref{D1110}.
\end{proof}
 
\begin{lem}\label{FR5.3.}
 Let \(\ug\in\R\), \(\og\in(\ug,\infp)\), and \(n\in\NO \).
 Then \(\Fggqu{2n+1} \subseteq \Hggequ{2n+1} \).
\end{lem}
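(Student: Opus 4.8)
The plan is to reduce the claim to the characterization of \tHnnde{} sequences of odd order in \rlem{L0913} and to check its three hypotheses. So let \(\seqs{2n+1}\in\Fggqu{2n+1}\); by \rdefnp{D1159}{D1159.a} and \rdefn{D1238} this means precisely that \(\Hau{n}\) and \(\Hbu{n}\) both belong to \(\Cggo{(n+1)q}\), the sequences of \rnota{N5.1} being formed with \(\alpha=\ug\) and \(\beta=\og\). Two of the three requirements in \rlem{L0913} are available at once: \rlem{L1307} gives \(\seqs{2n}\in\Hggqu{2n}\), and \rlem{L1319} gives \(\su{2n+1}\in\CHq\). Thus everything comes down to the range inclusion \(\ran{\yuu{n+1}{2n+1}}\subseteq\ran{\Hu{n}}\).

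To establish this inclusion I would route the argument through the block Hankel matrix \(\Ku{n}\). By \rrem{R1542} (with \(\alpha=\ug\) and \(\beta=\og\)) one has \(\Hau{n}+\Hbu{n}=\rk{\og-\ug}\Hu{n}\) as well as \(\Ku{n}=\ug\Hu{n}+\Hau{n}\). Since \(\Hau{n}\) and \(\Hbu{n}\) are \tnnH{} and \(\og-\ug>0\), the column space of a \tnnH{} summand lies in the column space of a \tnnH{} sum (cf.\ \rrem{MA.7.8.}), so \(\ran{\Hau{n}}\subseteq\ran{\Hau{n}+\Hbu{n}}=\ran{\Hu{n}}\), and therefore \(\ran{\Ku{n}}=\ran{\ug\Hu{n}+\Hau{n}}\subseteq\ran{\Hu{n}}+\ran{\Hau{n}}=\ran{\Hu{n}}\). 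As \(\yuu{n+1}{2n+1}\) is the last block column of \(\Ku{n}=\matauuo{\su{j+k+1}}{j,k}{0}{n}\) (cf.\ \rrem{R1528}), it follows that \(\ran{\yuu{n+1}{2n+1}}\subseteq\ran{\Ku{n}}\subseteq\ran{\Hu{n}}\), the missing condition, and an appeal to \rlem{L0913} completes the proof.

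The argument is short and I do not expect a genuine obstacle. The ``hard part'' is merely the observation that, among the three requirements in \rlem{L0913}, only the range inclusion \(\ran{\yuu{n+1}{2n+1}}\subseteq\ran{\Hu{n}}\) has real content, and that it drops out immediately from the additive decomposition \(\rk{\og-\ug}\Hu{n}=\Hau{n}+\Hbu{n}\) together with the non-negativity of the two summands. The only points demanding care are the bookkeeping when the generic identities of \rremss{R1542}{R1528} are specialized here and keeping track of the sign of \(\og-\ug\) so that this factor may be divided out of range relations. Taking adjoints in the same computation---all matrices being Hermitian---one also obtains \(\nul{\Hu{n}}\subseteq\nul{\Ku{n}}\), a null space inclusion closely related to the hypotheses of \rlem{M8.1.18.}, so that the same computation simultaneously serves that later purpose.
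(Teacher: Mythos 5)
Your proof is correct, but it takes a genuinely different route from the paper: the paper disposes of \rlem{FR5.3.} in one line by citing \zitaa{MR2735313}{\clem{2.21}}, an external result, whereas you give a self-contained internal derivation via the characterization of \(\Hggequ{2n+1}\) in \rlem{L0913}. Your reduction is sound: \rlem{L1307} and \rlem{L1319} (neither of whose proofs depends on \rlem{FR5.3.}, so there is no circularity) supply \(\seqs{2n}\in\Hggqu{2n}\) and \(\su{2n+1}\in\CHq\), and the decomposition \(\rk{\og-\ug}\Hu{n}=\Hau{n}+\Hbu{n}\) together with \(\Ku{n}=\ug\Hu{n}+\Hau{n}\) from \rrem{R1542} gives \(\ran{\Ku{n}}\subseteq\ran{\Hu{n}}\), hence the required \(\ran{\yuu{n+1}{2n+1}}\subseteq\ran{\Hu{n}}\) since \(\yuu{n+1}{2n+1}\) is the last block column of \(\Ku{n}\). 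One small bookkeeping point: the fact that the range of a \tnnH{} summand is contained in the range of a \tnnH{} sum is \rrem{A.20.}, not \rrem{MA.7.8.} (the latter is the criterion \(AA^\mpi B=B\)); the citation should be adjusted. What your approach buys is independence from the external reference and an argument entirely in the spirit of the paper's own Appendix; it is essentially the same range computation the paper later redoes inside the proof of \rprop{M8.1.21.} (where \(\cR(\Ku{n})\subseteq\cR(\Hu{n})\) and the dual null-space inclusion are extracted from \rlemss{FR5.3.}{L0913}), so your closing observation that the adjoint of your computation also yields \(\nul{\Hu{n}}\subseteq\nul{\Ku{n}}\) for use in \rlem{M8.1.18.} is apt.
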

\begin{proof}
 This can be seen from~\zitaa{MR2735313}{\clem{2.21}}.
\end{proof}

\breml{R1606}
 Let \(\ug\in\R\), let \(\og\in(\ug,\infp)\), and let \(n\in\NO\).
 Then, we see from \rlem{FR5.3.} and \rprop{R0957} that \(\Fggqu{2n+1}\subseteq\Dqqu{2n+1}\).
\end{rem}

\begin{prop}\label{M8.1.21.}
 Let \(\ug\in\R\), let \(\og\in(\ug,\infp)\), let \(m\in\NO\), and let  \(\seqs{m}\) be a sequence from \(\Cqq\).
 Then:
 \benui
  \il{M8.1.21.a} If \(\seqs{m}\in \Fggqu{m}\), then  \(\seqs{\ell} \in  \Fggqu{\ell}\) for each \(\ell\in\mn{0}{m}\).
  \il{M8.1.21.b} If \(\seqs{m}\in \Fgqu{m}\), then  \(\seqs{\ell} \in  \Fgqu{\ell}\) for each \(\ell\in\mn{0}{m}\).
 \eenui
\end{prop}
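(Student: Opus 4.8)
The plan is a downward induction on $m$: it suffices to show that $m\ge 1$ and $\seqs{m}\in\Fggqu{m}$ (resp.\ $\seqs{m}\in\Fgqu{m}$) imply $\seqs{m-1}\in\Fggqu{m-1}$ (resp.\ $\seqs{m-1}\in\Fgqu{m-1}$), for then, iterating from $m$ down to $\ell$, one obtains $\seqs{\ell}\in\Fggqu{\ell}$ (resp.\ $\Fgqu{\ell}$) for each $\ell\in\mn{0}{m}$. I would distinguish the two cases according to the parity of $m$.

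\emph{The case $m=2n+1$.} Let $\seqs{2n+1}\in\Fggqu{2n+1}$, so that $\Hau{n}$ and $\Hbu{n}$ are \tnnH{} by \rdefnp{D1159}{D1159.a}. By \rrem{R1539}, $\ba\Hu{n}=\Hau{n}+\Hbu{n}$; since $\ba=\og-\ug>0$, this forces $\Hu{n}$ to be \tnnH{}, i.e.\ $\seqs{2n}\in\Hggqu{2n}$. If $n\ge 1$, it remains by \rdefnp{D1159}{D1159.b} to check that $\Hcu{n-1}$ is \tnnH{}. For this I would apply \rlem{M8.1.18.} with $p=q$, $\alpha=\ug$, $\beta=\og$; its hypotheses \eqref{CMM} follow from $\Hau{n}+\Hbu{n}=\ba\Hu{n}$ and the non\-negativity of $\Hau{n},\Hbu{n},\Hu{n}$, which give $\ran\Hau{n}\subseteq\ran\Hu{n}$ and $\nul\Hu{n}\subseteq\nul\Hau{n}$, whereas the relevant columns of $\yuu{n+1}{2n+1}$ are sums of a column of $\Hau{n}$ and a scalar multiple of a column of $\Hu{n}$, and dually for $\zuu{n+1}{2n+1}$ (alternatively, \eqref{CMM} follows from $\seqs{2n+1}\in\Hggequ{2n+1}$, by \rlem{FR5.3.}, combined with \rlem{L0913} and \rlemp{L1355}{L1355.a}). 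Then \rlem{M8.1.18.} yields
\[
 \Hcu{n-1}=\ba\Delta_{q,n}^\ad\ek*{\Hau{n}\ps\Hbu{n}}\Delta_{q,n},
\]
and since the parallel sum of two \tnnH{} matrices is \tnnH{} (see \rapp{A1542}) and $\ba>0$, the right-hand side is \tnnH{}; hence $\seqsc{2(n-1)}\in\Hggqu{2(n-1)}$ and therefore $\seqs{2n}\in\Fggqu{2n}$.

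\emph{The case $m=2n$ with $n\in\N$.} Let $\seqs{2n}\in\Fggqu{2n}$, so that $\Hu{n}$ and $\Hcu{n-1}$ are \tnnH{} by \rdefnp{D1159}{D1159.b}. Applying \rlem{M8.1.19.} to $\seqs{2n}$ (with the lemma's index $n$ replaced by $n-1$ and with $\alpha=\ug$, $\beta=\og$, both real), \eqref{D1109} and \eqref{D1110} become
\begin{align*}
 \ba\Hau{n-1}&=\rk*{\nabla_{q,n}-\ug\Delta_{q,n}}^\ad\Hu{n}\rk*{\nabla_{q,n}-\ug\Delta_{q,n}}+\Hcu{n-1},\\
 \ba\Hbu{n-1}&=\rk*{\og\Delta_{q,n}-\nabla_{q,n}}^\ad\Hu{n}\rk*{\og\Delta_{q,n}-\nabla_{q,n}}+\Hcu{n-1}.
\end{align*}
On each right-hand side the first summand, being a congruence transform of the \tnnH{} matrix $\Hu{n}$, is \tnnH{}, and the second summand $\Hcu{n-1}$ is \tnnH{}; since $\ba>0$ it follows that $\Hau{n-1}$ and $\Hbu{n-1}$ are \tnnH{}, i.e.\ $\seqs{2n-1}\in\Fggqu{2n-1}$ by \rdefnp{D1159}{D1159.a}. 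Together with the previous case this proves \rPart{M8.1.21.a}.

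For \rPart{M8.1.21.b} one repeats the two steps with ``\tnnH{}'' replaced throughout by ``\tpH{}''. In the case $m=2n+1$, $\Hu{n}=\ba^\inv(\Hau{n}+\Hbu{n})$ is a sum of two \tpH{} matrices, hence \tpH{} and invertible, so \eqref{CMM} holds trivially; as $\Delta_{q,n}$ has full column rank and the parallel sum of two \tpH{} matrices is \tpH{} (see \rapp{A1542}), the displayed identity for $\Hcu{n-1}$ forces it to be \tpH{}. In the case $m=2n$, on each right-hand side above the first summand is \tnnH{} while $\Hcu{n-1}$ is now \tpH{}, so $\Hau{n-1}$ and $\Hbu{n-1}$ are \tpH{}. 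The one place calling for more than dividing by $\ba>0$ and invoking that congruences and parallel sums preserve (semi\-)definiteness is the verification of the hypotheses \eqref{CMM} of \rlem{M8.1.18.} in the \tnnH{} case $m=2n+1$; I expect this to be the main obstacle, but it is settled by either route indicated above.
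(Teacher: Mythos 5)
Your proposal is correct and follows essentially the same route as the paper's own proof: a downward induction split by parity, using \rlem{M8.1.18.} (parallel sums) for the step $2n+1\to 2n$ and \rlem{M8.1.19.} for the step $2n\to 2n-1$, with the same definiteness bookkeeping in the \tpH{} case. Your second route for verifying \eqref{CMM} (via \rlem{FR5.3.}, \rlem{L0913}, and Hermitian adjoints) is exactly the paper's argument, and your first, more hands-on route (using $\ran{\Hau{n}}\subseteq\ran{\Hau{n}+\Hbu{n}}=\ran{\Hu{n}}$ and the block-column structure of $\Ku{n}$) is also valid.
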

\begin{proof}
 The case \(m=0\) is trivial.
 Assume that \(m\geq1\) and \(\seqs{m}\in \Fggqu{m}\) (resp.\ \(\seqs{m}\in \Fgqu{m}\)).
 It is sufficient to prove that \(\seqs{ m  -1}\) belongs to  \(\Fggqu{ m -1}\) (resp.\ \(\Fgqu{ m -1}\)).
 
 First we suppose that there is an integer \(n\in\NO\) such that \( m  = 2n+1\).
 Then \rlem{L1307} yields that the matrix \(\Hu{n}\) is \tnnH{} (resp.\ \tpH{}).
 If \(n=0\), then \(\seqs{2n}\in\Fggqu{2n}\) (resp.\ \(\seqs{2n}\in\Fgqu{2n}\)) immediately follows from \rdefn{D1159}.
 Now we assume that \(n\ge 1\).
 In view of \rlem{L1319}, we have \(s_j^\ad  = s_j\) for all \( j\in \mn{0}{2n+1}\).
 The combination of \rlemss{FR5.3.}{L0913} provides us \(\cR (\yuu{n+1}{2n+1})\subseteq \cR (\Hu{n})\) and \(\cN (\Hu{n}) =\cN (\Hu{n}^\ad ) = \cR (\Hu{n})^\oc \subseteq \cR (\yuu{n+1}{2n+1})^\oc = \cN (\yuu{n+1}{2n+1}^\ad ) = \cN (\zuu{n+1}{2n+1})\).
 From \rlem{M8.1.18.} we get then 
\begin{equation}\label{D2003}
 \Hcu{n-1}
 = \ba  \Delta_{q,n}^\ad  \ek*{ \Hau{n} \ps\Hbu{n}} \Delta_{q,n}.
\end{equation}
 Because of  \(\seqs{2n+1}\in \Fggqu{2n+1}\) (resp.\ \(\seqs{2n+1}\in \Fgqu{2n+1}\)), \rdefnp{D1159}{D1159.a} shows that the matrices \(\Hau{n}\) and \(\Hbu{n}\) are both \tnnH{} (resp.\ \tpH{}).
 Thus, \rlem{MA.8.7*.} (resp.\ \rrem{L0751}) implies that the matrix \(\Hau{n}\ps  \Hbu{n}\) is \tnnH{} (resp.\ \tpH{}) as well.
 Because of \(\beta -\alpha > 0\) and \(\rank\Delta_{q,n}=nq\), then \eqref{D2003} und \rrem{R0705} yield that the matrix \(\Hcu{n-1}\) is \tnnH{} (resp.\ \tpH{}).
 Consequently, in view of \rdefnp{D1159}{D1159.b}, we see that \(\seqs{2n}\) belongs to \(\Fggqu{2n}\) (resp.\ \(\Fggqu{2n}\)).
 
 It remains to consider the case that \( m  =2n\) holds true with some \(n\in\N\).
 By virtue of  \(\seqs{2n} \in\Fggqu{2n}\) (resp.\ \(\seqs{2n} \in\Fgqu{2n}\)) and \rdefnp{D1159}{D1159.b}, the matrix \(\Hcu{n-1}\) is \tnnH{} (resp.\ \tpH{}).
 \rlem{M8.1.19.} and \(\beta>\alpha\) yield \(\Hau{n-1}\lgeq\ba^\inv\Hcu{n-1}\) and \(\Hbu{n-1}\lgeq\ba^\inv\Hcu{n-1}\). Hence, the matrices \(\Hau{n-1}\) and \(\Hbu{n-1}\) are both \tnnH{} (resp.\ \tpH{}).
 Thus, \rdefnp{D1159}{D1159.a} implies \(\seqs{2n-1} \in\Fggqu{2n-1}\) (resp.\ \(\seqs{2n-1} \in\Fgqu{2n-1}\)).
\end{proof}

\bcorl{B8.1.22.}
 Let \(\ug\in\R\), let \(\og\in(\ug,\infp)\), let \(\kappa\in\NOinf\), and let \(\seqs{\kappa}\) be a sequence from \(\Cqq\).
 Then:
 \benui
  \il{B8.1.22.a} If \(\seqs{\kappa}\in\Fggqu{\kappa}\), then \(\Lu{n}\in\Cggq \) for all \(n\in\NO \) with \(2n \le \kappa\), furthermore \(\set{\Lau{n}, \Lbu{n}}\subseteq \Cggq \) for all \(n\in\NO \) with \(2n+1 \le\kappa\), and \(\Lcu{n}\in\Cggq \) for all \(n\in\NO \) with \(2n+2\le \kappa\).
  \il{B8.1.22.b} If \(\seqs{\kappa}\in\Fgqu{\kappa}\), then \(\Lu{n}\in\Cgq \) for all \(n\in\NO \) with \(2n \le \kappa\), furthermore \(\set{\Lau{n}, \Lbu{n}}\subseteq \Cgq \) for all \(n\in\NO \) with \(2n+1 \le\kappa\), and \(\Lcu{n}\in\Cgq \) for all \(n\in\NO \) with \(2n+2\le \kappa\).
 \eenui
\ecor
\bproof
 Use \rprop{M8.1.21.}, \rdefnss{D1159}{D0846}, and \rrem{R1240}.
\eproof

\bcorl{C1342}
 Let \(\ug\in\R\), let \(\og\in(\ug,\infp)\), and let \(m\in\NO\). Then \(\Fggequ{m}\subseteq\Fggqu{m}\) and \(\Fgequ{m}\subseteq\Fgqu{m}\).
\ecor
\bproof
 Use \rprop{M8.1.21.}.
\eproof

\begin{prop}\label{S8.1.23.}
 Let \(\ug\in\R\), let \(\og\in(\ug,\infp)\), and let \(m\in\NO\).
 Then \(\Fggqu{m}\subseteq \Hggequ{m}\) and \(\Fgqu{m}\subseteq \Hgequ{m}\).
\end{prop}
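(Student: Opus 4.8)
The plan is to prove the two inclusions separately and, within each, to distinguish the parity of $m$. For $\Fggqu{m}\subseteq\Hggequ{m}$ I would pass through representing measures, using the two solvability criteria \rthm{LHP} and \rthm{T1.2}; for $\Fgqu{m}\subseteq\Hgequ{m}$ I would argue directly on the level of sequences with the material of \rsec{S1105}, there being no measure-theoretic characterisation of positive definiteness at our disposal. Concretely, let $\seqs{m}\in\Fggqu{m}$. By \rthm{LHP} there is a $\sigma\in\MggqFs{m}$. Regarded as a measure on $(\R,\BorR)$ via $B\mapsto\sigma(B\cap\ab)$, this $\sigma$ is concentrated on the bounded set $\ab$, so all its power moments exist and coincide with the prescribed matrices; hence it belongs to $\MggqRsg{m}$, so $\MggqRsg{m}\neq\emptyset$, and \rthm{T1.2} gives $\seqs{m}\in\Hggequ{m}$. (For even $m=2n$ one can avoid measures altogether: by \rthm{T0718} we have $\seqs{2n}\in\Fggequ{2n}$, so $\seqs{2n+1}\in\Fggqu{2n+1}$ for a suitable $\su{2n+1}$, whence $\seqs{2n+1}\in\Hggequ{2n+1}$ by \rlem{FR5.3.}, and therefore $\seqs{2n}\in\Hggequ{2n}$.)

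For the positive definite inclusion, let $\seqs{m}\in\Fgqu{m}$. If $m=2n$, then $\seqs{2n}\in\Hgqu{2n}$ by \rdefnp{D1159}{D1159.b}, and $\Hgqu{2n}=\Hgequ{2n}$ by \rrem{L1211}, so $\seqs{2n}\in\Hgequ{2n}$. If $m=2n+1$, then \rdefnp{D1159}{D1159.a} gives $\seqsb{2n}\in\Hgqu{2n}$, and \rlem{L1307} gives $\seqs{2n}\in\Hgqu{2n}$; by \rremp{L1251}{L1251.b} the matrices $\su{j}$ and $\sbu{j}$ with $j\in\mn{0}{2n}$ all lie in $\CHq$, so that $\su{2n+1}=\og\su{2n}-\sbu{2n}\in\CHq$, and \rrem{L1034} then yields $\seqs{2n+1}\in\Hgequ{2n+1}$.

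I expect the even, merely non-negative definite case $\Fggqu{2n}\subseteq\Hggequ{2n}$ (with $n\geq1$) to be the only genuinely substantial one. There $\seqs{2n}\in\Hggqu{2n}$ is immediate from \rdefnp{D1159}{D1159.b}, but by \rrem{R1451} this is \emph{not} enough, so the surplus information carried by the \tabHnnd{} condition---concretely, the non-negativity of the block Hankel matrices built from $\seqsc{2(n-1)}$, or, what comes to the same thing in the end, the existence of a representing measure concentrated on $\ab$---really has to be brought in. Everything else reduces to bookkeeping with the facts assembled in \rsec{S1105} and \rsec{S1306}.
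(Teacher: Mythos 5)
Your proof is correct, but for the inclusion \(\Fggqu{m}\subseteq\Hggequ{m}\) it takes a genuinely different route from the paper. You pass through representing measures: \rthm{LHP} produces a \(\sigma\in\MggqFs{m}\), its extension by zero to \(\BorR\) is compactly supported and hence lies in \(\MggqRsg{m}\), and \rthm{T1.2} then yields \(\seqs{m}\in\Hggequ{m}\); this handles both parities uniformly and is valid given what is established earlier in the paper. The paper, by contrast, argues algebraically: for odd \(m\) it invokes \rlem{FR5.3.}, and for the substantial even case \(m=2n\) with \(n\geq1\) it derives the range inclusion \(\ran{\yuu{n+1}{2n}}\subseteq\ran{\Hu{n-1}}\) from the identities of \rrem{R1539} together with the range additivity of sums of \tnnH{} matrices (\rrem{A.20.}), concluding with \rlem{L0920} --- precisely the ``surplus information'' carried by \(\seqsc{2(n-1)}\) that you correctly identify as essential, but which you then import via the measure rather than extract algebraically. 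The trade-off matters for the architecture of the paper: \rprop{S8.1.23.} feeds into \rlem{MTM} and hence into \rthm{P1023}, from which the authors afterwards claim purely algebraic re-derivations of \rthm{LHP}, \rthm{T0718}, and related statements; if \rprop{S8.1.23.} is itself proved from \rthm{LHP} (or, as in your parenthetical variant for even \(m\), from \rthm{T0718}, whose only proof at that stage is measure-theoretic, so that variant does not really ``avoid measures''), that re-derivation becomes circular. So your argument establishes the proposition but cannot serve the role it plays in the algebraic program of \rsec{S1306}. Your treatment of \(\Fgqu{m}\subseteq\Hgequ{m}\) coincides with the paper's: \rrem{L1211} for even \(m\), and for odd \(m\) the combination of \rlem{L1307}, the Hermiticity of \(\su{2n+1}=\og\su{2n}-\sbu{2n}\), and \rrem{L1034}.
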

\begin{proof}
 We first show \(\Fggqu{m}\subseteq \Hggequ{m}\).
 Because of \(\Hggequ{0} =\Hggqu{0}\), the case \(m=0\) is trivial. If \(m=2n+1\) for some \(n\in\NO \), then \rlem{FR5.3.} yields the assertion.
 
 Now let \(m=2n\) with some \(n\in\N\).
 We consider an arbitrary sequence  \(\seqs{2n}\in \Fggqu{2n}\).
 Then \rdefnp{D1159}{D1159.b} and \rrem{R1601} provide us \(\seqs{2n}\in\Hggqu{2n}\) and \(\set{\Hcu{n-1}, \Gu{n-1}}\subseteq \Cggo{nq}\).
 Thus, \rrem{A.20.} yields \(\ran{\Gu{n-1} +  \Hcu{n-1}} = \ran{\Gu{n-1}}+\ran{\Hcu{n-1}}\).
 According to \rpropp{M8.1.21.}{M8.1.21.a}, the sequence \(\seqs{2n-1}\) belongs to \(\Fggqu{2n-1}\).
 Consequently, \rdefnp{D1159}{D1159.a} yields \(\set{ \Hau{n-1}, \Hbu{n-1}} \subseteq \Cggo{nq}\).
 Hence, using \rrem{A.20.} again, we conclude \(\ran{\Hau{n-1} + \Hbu{n-1}}=\ran{\Hau{n-1}}+\ran{\Hbu{n-1}}\).
 Since \rrem{R1539} shows that \(\Hu{n-1}=\frac{1}{\og-\ug}\ek{\Hau{n-1}+\Hbu{n-1}}\) and \(\Gu{n-1} + \Hcu{n-1} = \frac{1}{\beta -\alpha}\ek{\beta^2 \Hau{n-1} + \alpha^2 \Hbu{n-1}}\), we get then
\[\begin{split}
 \ran{\Gu{n-1}}
 &\subseteq\ran{\Gu{n-1}}+\Ran{\Hcu{n-1}}\\
 &=\Ran{\beta^2 \Hau{n-1} + \alpha^2 \Hbu{n-1}}
 \subseteq\Ran{\beta^2 \Hau{n-1}} +\Ran{ \alpha^2 \Hbu{n-1}}\\
 &\subseteq\Ran{\Hau{n-1}}+\Ran{\Hbu{n-1}}
 =\ran{\Hu{n-1}}.
\end{split}\]
 Taking into account that \(\yuu{n+1}{2n}\) is the last \taaa{nq}{q}{block} column of \(\Gu{n-1}\), we infer \(\ran{\yuu{n+1}{2n}}\subseteq \ran{\Hu{n-1}}\).
 Thus, \rlem{L0920} yields \(\seqs{2n}\in\Hggequ{2n}\).
 
 Now we show \(\Fgqu{m}\subseteq \Hgequ{m}\).
 If \(m=2n\) with some \(n\in\NO\), this follows from \rrem{L1211} and \rdefnp{D1159}{D1159.b}.
 If \(m=2n+1\) with some \(n\in\NO\), this follows from \rrem{L1034} in view \rlemss{L1307}{L1319}.
\end{proof}

\section{On \hSnnd{} sequences}\label{S1410}
 In this section, we discuss two dual classes of finite sequences of complex \tqqa{matrices} which turn out to be intimately connected with the class \(\Fggqu{m}\).
 The first and second types of this classes will be associated with the left endpoint \(\ug\) and the right endpoint \(\og\) of the interval \(\ab\), resp.
 These classes were already studied by the authors in earlier work (see~\zitas{MR2570113,MR3014201,MR3133464}).
 First we consider the class of sequences connected with the left endpoint \(\ug\) of \(\ab\).
 
\bdefnl{D1413}
 Let \(\ug\in\R\) and let \(n\in\NO\).
 \benui
  \il{D1413.a} Let \(\seqs{2n+1}\) be a sequence from \(\Cqq\).
  Then \(\seqs{2n+1}\) is called \noti{\taSnnd{}}{s@\haSnnd} (resp.\ \noti{\taSpd{}}{s@\haSpd}) if both sequences \(\seqs{2n}\) and \(\seqsa{2n}\) are \tHnnd{} (resp.\ \tHpd{}).
  \il{D1413.b} Let \(\seqs{2n}\) be a sequence from \(\Cqq\).
  Then \(\seqs{2n}\) is called \noti{\taSnnd{}}{s@\haSnnd} (resp.\ \noti{\taSpd{}}{s@\haSpd}) if \(\seqs{2n}\) and, in the case \(n\geq1\), also \(\seqsa{2(n-1)}\) is \tHnnd{} (resp.\ \tHpd{}).
 \eenui
 If \(m\in\NO\), then, \symba{\Kggqu{m}}{k} (resp.\ \symba{\Kgqu{m}}{k}) stands for the set of all \taSnnd{} (resp.\ \taSpd{}) sequences \(\seqs{m}\) from \(\Cqq\).
\edefn

 Let \(\ug\in\R\), let \(m\in\NO\), and let \(\seqs{m}\) be a sequence from \(\Cqq\).
 Then \(\seqs{m}\) is called \noti{\taSnnde{}}{s@\haSnnde{}} (resp.\ \noti{\taSpde{}}{s@\haSpde{}}) if there exists a matrix \(\su{m+1}\in\Cqq\) such that \(\seqs{m+1}\in\Kggqu{m+1}\) (resp.\ \(\seqs{m+1}\in\Kgqu{m+1}\)).
 We denote by \symba{\Kggequ{m}}{k} (resp.\ \symba{\Kgequ{m}}{k}) the set of all \taSnnde{} (resp.\ \taSpde{}) sequences \(\seqs{m}\) from \(\Cqq\).
 The importance of the class \(\Kggequ{m}\) in the context of moment problems is caused by the following observation:
\bthmnl{\zitaa{MR2735313}{\cthm{1.3}}}{T1306}
 Let \(\ug\in\R\), \(m\in\NO\), and let \(\seqs{m}\) be a sequence from \(\Cqq\).
 Then \(\MggqKsg{m}\neq\emptyset\) if and only if \(\seqs{m}\in\Kggequ{m}\).
\ethm

\blemnl{\zitaa{MR3014201}{\clem{2.9(a)--(b)}}}{L1435}
 Let \(\ug\in\R\), let \(m\in\NO\), and let \(\seqs{m}\in\Kggqu{m}\).
 Then \(\su{j}\in\CHq\) for all \(j\in\mn{0}{m}\) and \(\su{2k}\in\Cggq\) for all \(k\in\NO\) with \(2k\leq m\).
 Furthermore, \(\sau{j}\in\CHq\) for all \(j\in\mn{0}{m-1}\) and \(\sau{2k}\in\Cggq\) for all \(k\in\NO\) with \(2k+1\leq m\).
\elem

\breml{R1447}
 Let \(\ug\in\R\) and let \(m\in\NO\).
 Then \(\Kggequ{m}\subseteq\Kggqu{m}\) and, in the case \(m\in\N\), furthermore \(\Kggequ{m}\neq\Kggqu{m}\).
\erem
 
\breml{R1058}
 Let \(\ug\in\R\), let \(m\in\N\), and let \(\seqs{m}\in\Kggqu{m}\).
 Then \(\seqs{\ell}\in\Kggequ{\ell}\) for all \(\ell\in\mn{0}{m-1}\).
\erem

\breml{R1059}
 Let \(\ug\in\R\), let \(m\in\NO\), and let \(\seqs{m}\in\Kggequ{m}\) (resp.\ \(\Kgqu{m}\)).
 Then \(\seqs{\ell}\in\Kggequ{\ell}\) (resp.\ \(\Kgqu{\ell}\)) for all \(\ell\in\mn{0}{\kappa}\).
\erem

 Let \(\ug\in\R\). A sequence \(\seqsinf\) from \(\Cqq\) is called \noti{\taSnnd{}}{\haSnnd{}} (resp.\ \noti{\taSpd{}}{\haSpd{}}) if \(\seqs{m}\in\Kggqu{m}\) (resp.\ \(\seqs{m}\in\Kgqu{m}\)) for all \(m\in\NO\).
 We denote by \symba{\Kggqinfa}{k} (resp.\ \symba{\Kgqinfa}{k}) the set of all \taSnnd{} (resp.\ \taSpd{}) sequences \(\seqsinf\) from \(\Cqq\).
 Furthermore, let \symba{\Kggeqinfa\defeq\Kggqinfa}{k} and let \symba{\Kgequ{\infi}\defeq\Kgqinfa}{k}.

\bleml{R1039}
 Let \(\ug\in\R\), let \(\kappa\in\NOinf\), and let \(\seqs{\kappa} \in\Kggqu{\kappa}\).
 Then \(\Lu{n}\in\Cggq \) for all \(n\in\NO \) with \(2n \le \kappa\), and \(\Lau{n}\in\Cggq \) for all \(n\in\NO \) with \(2n+1\le \kappa\).
\elem
\bproof
 Combine \rdefn{D1413}, \rremss{R1058}{R1447}, and \rremp{R1240}{R1240.a}.
\eproof

\bpropnl{\zitaa{MR3014201}{\cprop{2.19}}}{P1439}
 Let \(\ug\in\R\).
 Then:
 \benui
  \il{P1439.a} Let \(n\in\NO\) and let \(\seqs{2n+1}\in\Kggequ{2n+1}\), then
  \[
   \ran{\Lu{0}}\supseteq\Ran{\Lau{0}}\supseteq\ran{\Lu{1}}\supseteq\Ran{\Lau{1}}\supseteq\dotsb\supseteq\ran{\Lu{n}}\supseteq\Ran{\Lau{n}}
  \]
  and
  \[
   \nul{\Lu{0}}\subseteq\Nul{\Lau{0}}\subseteq\nul{\Lu{1}}\subseteq\Nul{\Lau{1}}\subseteq\dotsb\subseteq\nul{\Lu{n}}\subseteq\Nul{\Lau{n}}.
  \]
  \il{P1439.b} Let \(n\in\N\) and let \(\seqs{2n}\in\Kggequ{2n}\), then
  \[
   \ran{\Lu{0}}\supseteq\Ran{\Lau{0}}\supseteq\ran{\Lu{1}}\supseteq\Ran{\Lau{1}}\supseteq\dotsb\supseteq\Ran{\Lau{n-1}}\supseteq\ran{\Lu{n}}
  \]
  and
  \[
   \nul{\Lu{0}}\subseteq\Nul{\Lau{0}}\subseteq\nul{\Lu{1}}\subseteq\Nul{\Lau{1}}\subseteq\dotsb\subseteq\Nul{\Lau{n-1}}\subseteq\nul{\Lu{n}}.
  \]
 \eenui
\eprop

 Now we consider the constructions connected with the right endpoint \(\og\) of the interval \(\ab\).

\bdefnl{D1420}
 Let \(\og\in\R\) and let \(n\in\NO\).
 \benui
  \il{D1420.a} Let \(\seqs{2n+1}\) be a sequence from \(\Cqq\).
  Then \(\seqs{2n+1}\) is called \noti{\tbSnnd{}}{s@\hbSnnd} (resp.\ \noti{\tbSpd{}}{s@\hbSpd}) if both sequences \(\seqs{2n}\) and \(\seqsb{2n}\) are \tHnnd{} (resp.\ \tHpd{}).
  \il{D1420.b} Let \(\seqs{2n}\) be a sequence from \(\Cqq\).
  Then \(\seqs{2n}\) is called \noti{\tbSnnd{}}{s@\hbSnnd} (resp.\ \noti{\tbSpd{}}{s@\hbSpd}) if \(\seqs{2n}\) and, in the case \(n\geq1\), also \(\seqsb{2(n-1)}\) is \tHnnd{} (resp.\ \tHpd{}).
 \eenui
 If \(m\in\NO\), then \symba{\Lggqu{m}}{l} (resp.\ \symba{\Lgqu{m}}{l}) stands for the set of all \tbSnnd{} (resp.\ \tbSpd{}) sequences \(\seqs{m}\) from \(\Cqq\).
\edefn

 Let \(\og\in\R\), let \(m\in\NO\), and let \(\seqs{m}\) be a sequence from \(\Cqq\).
 Then \(\seqs{m}\) is called \noti{\tbSnnde{}}{s@\hbSnnde} (resp.\ \noti{\tbSpde{}}{s@\hbSpde{}}) if there exists a matrix \(\su{m+1}\in\Cqq\) such that \(\seqs{m+1}\in\Lggqu{m+1}\) (resp.\ \(\seqs{m+1}\in\Lgqu{m+1}\)).
 We denote by \symba{\Lggequ{m}}{l} (resp.\ \symba{\Lgequ{m}}{l}) the set of all \tbSnnde{} (resp.\ \tbSpde{}) sequences \(\seqs{m}\) from \(\Cqq\).
 
 A sequence \(\seqsinf\) from \(\Cqq\) is called \noti{\tbSnnd{}}{\hbSnnd{}} (resp.\ \noti{\tbSpd{}}{\hbSpd{}}) if \(\seqs{m}\in\Lggqu{m}\) (resp.\ \(\seqs{m}\in\Lgqu{m}\)) for all \(m\in\NO\).
 We denote by \symba{\Lggqinf}{l} (resp.\ \symba{\Lgqinf}{l}) the set of all \tbSnnd{} (resp.\ \tbSpd{}) sequences \(\seqsinf\) from \(\Cqq\).
 Furthermore, let \symba{\Lggequ{\infi}\defeq\Lggqinf}{l} and let \symba{\Lgequ{\infi}\defeq\Lgqinf}{l}.

\bleml{FRL4.26.}
 Let \(\og\in\R\), let \( m \in\NO\), and let \(\seqs{m} \) be a sequence from \(\Cqq\).
 Let the sequence \(\seqr{ m }\) be given by \(r_j \defeq  (-1)^j s_j\).
 Then:
 \benui
  \il{FRL4.26.a} \(\seqs{ m } \in \Lggqu{ m } \) if and only if \((r_j)_{j=0}^ m  \in \Kgguuu{q}{ m }{-\og}\).
  \il{FRL4.26.b} \(\seqs{ m } \in \Lggequ{ m } \) if and only if \((r_j)_{j=0}^ m  \in \Kggeuuu{q}{ m }{-\og}\).
  \il{FRL4.26.c} \(\seqs{ m } \in \Lgqu{ m } \) if and only if \((r_j)_{j=0}^ m  \in \Kguuu{q}{ m }{-\og}\).
  \il{FRL4.26.d} \(\seqs{ m } \in \Lgequ{ m } \) if and only if \((r_j)_{j=0}^ m  \in \Kgeuuu{q}{ m }{-\og}\).
 \eenui
\elem
\bproof
 This can be verified analogously to the proof of \rlem{L1039}.
\eproof

 The importance of the class \(\Lggequ{m}\) in the context of moment problems is caused by the following observation:
\bthml{T1328}
 Let \(\og\in\R\), let \(m\in\NO\), and let \(\seqs{m}\) be a sequence from \(\Cqq\).
 Then \(\MggqLsg{m}\neq\emptyset\) if and only if \(\seqs{m}\in\Lggequ{m}\).
\ethm
\bproof
 Combine \rlemp{FRL4.26.}{FRL4.26.b}, \rthm{T1306}, and \rrem{R0817}.
\eproof

\breml{R1449}
 Let \(\og\in\R\) and let \(m\in\NO\).
 Then \(\Lggequ{m}\subseteq\Lggqu{m}\) and, in the case \(m\in\N\), furthermore \(\Lggequ{m}\neq\Lggqu{m}\).
\erem

\breml{R1103}
 Let \(\og\in\R\), let \(m\in\N\), and let \(\seqs{m}\in\Lggqu{m}\).
 Then \(\seqs{\ell}\in\Lggequ{\ell}\) for all \(\ell\in\mn{0}{m-1}\).
\erem

\breml{R1104}
 Let \(\og\in\R\), let \(m\in\NO\), and let \(\seqs{m}\in\Lggequ{m}\) (resp.\ \(\Lgqu{m}\)).
 Then \(\seqs{\ell}\in\Lggequ{\ell}\) (resp.\ \(\Lgqu{\ell}\)) for all \(\ell\in\mn{0}{m}\).
\erem

\begin{rem}\label{R1043}
 Let \(\og\in\R\), let \(\kappa\in\NOinf\), and let \(\seqs{\kappa} \in\Lggqu{\kappa}\).
 In view of \rdefn{D1420}, \rremss{R1103}{R1449}, and \rremp{R1240}{R1240.a}, then \(\Lu{n}\in\Cggq \) for all \(n\in\NO \) with \(2n \le \kappa\), and \(\Lbu{n}\in\Cggq \) for all \(n\in\NO \) with \(2n+1\le \kappa\).
\end{rem}

\begin{rem}\label{FRL425}
 For each \(n\in\N\), it is readily checked that
\begin{align*}
\Kggequ{2n-1}& = \setaa*{\seqs{2n-1} \in \Hggequ{2n-1}}{(\sau{j})_{j=0}^{2(n-1)} \in \Hggqu{2(n-1)}},\\
\Lggequ{2n-1} &= \setaa*{\seqs{2n-1} \in \Hggequ{2n-1}}{(\sbu{j})_{j=0}^{2(n-1)} \in \Hggqu{2(n-1)}},\\
\Kggequ{2n} &= \setaa*{\seqs{2n} \in \Hggqu{2n}}{(\sau{j})_{j=0}^{2n-1} \in \Hggequ{2n-1}},\\
 \Lggequ{2n} &= \setaa*{\seqs{2n} \in \Hggqu{2n}}{(\sbu{j})_{j=0}^{2n-1} \in \Hggequ{2n-1}} .
\end{align*}
\end{rem}

\bpropl{L5.1.30.}
 Let \(\ug,\og\in\R\) and let \(n\in\N\).
 Then:
\benui
 \il{L5.1.30.a} \(\Kggequ{2n-1} = \setaa{\seqs{2n-1} \in \Kggqu{2n-1}}{\cN (\Lu{n-1}) \subseteq \nul{\Lau{n-1}}}\).
 \il{L5.1.30.b} \(\Lggequ{2n-1} = \setaa{\seqs{2n-1} \in \Lggqu{2n-1} }{ \cN (\Lu{n-1}) \subseteq \nul{\Lbu{n-1}}}\).
 \il{L5.1.30.c} \(\Kggequ{2n} = \setaa{\seqs{2n} \in \Kggqu{2n} }{ \nul{\Lau{n-1}} \subseteq  \cN (\Lu{n}) }\).
 \il{L5.1.30.d} \(\Lggequ{2n} = \setaa{\seqs{2n} \in \Lggqu{2n} }{ \nul{\Lbu{n-1}} \subseteq \cN (\Lu{n})}\).
\eenui
\eprop
\begin{proof}
 \rPartss{L5.1.30.a}{L5.1.30.c} are proved in~\cite[\clemss{4.15}{4.16}]{MR2735313}.

 \eqref{L5.1.30.b} Let \(r_j \defeq  (-1)^j s_j\) for all \(j\in\mn{0}{2n-1}\).
 Then \(s_j \defeq  (-1)^j r_j\) for each \(j\in\mn{0}{2n-1}\).
 First we assume \(\seqs{2n-1}\in\Lggequ{2n-1}\).
 \rlemp{FRL4.26.}{FRL4.26.b} provides us then \((r_j)_{j=0}^{2n-1}\in\Kggeuuu{q}{2n-1}{-\og}\).
 Thus, \rpart{L5.1.30.a} shows \((r_j)_{j=0}^{2n-1}\in\Kgguuu{q}{2n-1}{-\og}\) and 
\begin{equation}\label{N489N}
 \cN (\Lu{n-1}^{\langle r\rangle})
 \subseteq\Nul{(_{-\beta}L^{\langle r\rangle})_{n-1}}.
\end{equation}
 In particular, \rlemp{FRL4.26.}{FRL4.26.a} implies
\begin{equation}\label{MAI}
 \seqs{2n-1}
 \in\Lggqu{2n-1}.
\end{equation}
 Taking into account that 
\begin{equation}\label{DIR}
 \rk{\fourIdx{}{-\og}{}{}{r}}_j
 = -(-\beta) (-1)^j s_j + (-1)^{j+1}\su{j+1}
 = (-1)^j (\beta s_j -\su{j+1})
 = (-1)^j \sbu{j}
\end{equation}
 holds true for each \(j\in\mn{0}{2n-1}\), \rlemp{L1338}{L1338.b} implies \((_{-\beta} L^{\langle r\rangle})_{n-1} =\Lbu{n-1}\).
 Since \rlemp{L1338}{L1338.b} also yields \(\Lu{n-1}^{\langle r\rangle} = \Lu{n-1}\), from \eqref{N489N} we get then
\begin{equation}\label{MAG}
 \cN (\Lu{n-1})
 \subseteq\Nul{\Lbu{n-1}}.
\end{equation}
 Conversely, now suppose that \eqref{MAI} and \eqref{MAG} are valid.
 Because of \eqref{MAI} and \rlemp{FRL4.26.}{FRL4.26.a}, then \((r_j)_{j=0}^{2n-1}\) belongs to \(\Kgguuu{q}{2n-1}{-\og}\).
 From \rlemp{L1338}{L1338.b} we get again \(\Lu{n-1}^{\langle r\rangle} = \Lu{n-1}\) and, in view of \eqref{DIR}, the equation \(\rk{\fourIdx{}{-\og}{}{}{L^{\langle r\rangle}}}_{n-1} = \Lbu{n-1}\) as well.
 Thus, \eqref{MAG} implies \eqref{N489N}.
 Consequently, \rpart{L5.1.30.a} yields  \((r_j)_{j=0}^{2n-1}\in\Kggeuuu{q}{2n-1}{-\og}\).
 Hence, \rlemp{FRL4.26.}{FRL4.26.b}  shows  that \(\seqs{2n-1} \in\Lggequ{2n-1}\).
 
 \eqref{L5.1.30.d} Analogously, as \rpart{L5.1.30.b} can be proved by virtue of \rpart{L5.1.30.a}, \rpart{L5.1.30.d} can be checked using \rpart{L5.1.30.c}.
\end{proof}

 If \(\cM\) is a \tne{} subset of \(\Cq\), then let \symb{\cM^\oc} be the set of all \(x\in\Cq\) which fulfill \(\innerE{x}{y}=0\) for each \(y\in\cM\), where \symba{\innerE{\cdot}{\cdot}}{e} is the Euclidean inner product in \(\Cq\).

\bpropl{KL5.1.30.}
 Let \(\ug,\og\in\R\) and let \(n\in\N\).
 Then:
\benui
 \il{KL5.1.30.a} \(\Kggequ{2n-1} = \setaa{\seqs{2n-1} \in \Kggqu{2n-1}}{\ran{\Lau{n-1}}\subseteq \cR (\Lu{n-1})}\).
 \il{KL5.1.30.b} \(\Lggequ{2n-1} = \setaa{\seqs{2n-1} \in \Lggqu{2n-1}}{\ran{\Lbu{n-1}}\subseteq \cR (\Lu{n-1})}\).
 \il{KL5.1.30.c} \(\Kggequ{2n} = \setaa{\seqs{2n} \in \Kggqu{2n}}{\cR (\Lu{n}) \subseteq \ran{\Lau{n-1}}}\).
 \il{KL5.1.30.d} \(\Lggequ{2n} = \setaa{\seqs{2n} \in \Lggqu{2n}}{\cR (\Lu{n}) \subseteq \ran{\Lbu{n-1}}}\).
\eenui
\eprop
\begin{proof}
 In view of the Definition of the set \(\Kggqu{2n-1}\) and \rlem{R1039}, we see that, for each   \(\seqs{2n-1} \in \Kggqu{2n-1} \), the matrices \(\Lu{n-1}\) and \(\Lau{n-1}\) are both \tnnH{}.
 Thus, \(\cN (\Lu{n-1}) = \cN (\Lu{n-1}^\ad ) = \cR (\Lu{n-1})^\oc\) and \( \nul{\Lau{n-1}}  = \nul{\Lau{n-1}^\ad} =\ran{\Lau{n-1}}^\oc\).
 Consequently, the application of \rprop{L5.1.30.} completes the proof of \eqref{KL5.1.30.a}.
 \rPartsss{KL5.1.30.b}{KL5.1.30.c}{KL5.1.30.d} can be checked analogously.
\end{proof}

 Now we are going to study several aspects of the interplay between those four \tHnnd{} sequences which determine the sections of an \tabHnnd{} sequence.
 In particular, we derive formulas connecting the matrices which were introduced in \rpartss{N41.c}{N41.d} of \rnota{N41} for each of the sequences \(\seqs{\kappa}\), \(\seqsa{\kappa-1}\), \(\seqsb{\kappa-1}\), and \(\seqsc{\kappa-2}\).
 These formulas play an important role in the proof of \rthm{M8.1.43.}, which is of central importance for our further considerations.

\begin{lem}\label{FRL432.}
 Let \(\ug,\og\in\R\) and let \(n\in\N\).
 Then:
\benui
 \il{FRL432.a} If \(\seqs{2n} \in\Kggqu{2n}\), then 
\[
\Lu{n}
=    \sau{2n-1} - \Mau{n-1} - \Lau{n-1} \Lu{n-1}^\mpi  (\su{2n-1} - \Mu{n-1}).
\]
 \il{FRL432.b} If \(\seqs{2n} \in\Lggqu{2n}\), then 
\[
 \Lu{n}
 = - \ek*{  \sbu{2n-1} - \Mbu{n-1}} + \Lbu{n-1} \Lu{n-1}^\mpi  (\su{2n-1} - \Mu{n-1}).
\]
\eenui
\end{lem}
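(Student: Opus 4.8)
\emph{Plan.} Both identities are purely algebraic, so no moment problem is involved. I would prove \rpart{FRL432.a} and deduce \rpart{FRL432.b} from it by the reflection $r_j\defeq(-1)^js_j$: by \rlemp{FRL4.26.}{FRL4.26.a} the hypothesis $\seqs{2n}\in\Lggqu{2n}$ says $(r_j)_{j=0}^{2n}\in\Kgguuu{q}{2n}{-\og}$, so \rpart{FRL432.a} applies to $(r_j)_{j=0}^{2n}$ with $(-\og,-\ug)$ in place of $(\ug,\og)$. Translating back uses \rlemp{L1338}{L1338.b} (which gives $\Luo{n}{r}=\Lu{n}$, $\Luo{n-1}{r}=\Lu{n-1}$ and $\Muo{n-1}{r}=-\Mu{n-1}$), the elementary identity $\rk{\fourIdx{}{-\og}{}{}{r}}_j=(-1)^j\sbu{j}$, and once more \rlemp{L1338}{L1338.b}, now applied to the reflected sequence $\rk{(-1)^j\sbu{j}}_j$, which shows the $\Lu{}$- and $\Mu{}$-matrices built from $\rk{(-1)^j\sbu{j}}_j$ to be $\Lbu{n-1}$ and $-\Mbu{n-1}$; collecting signs turns \rpart{FRL432.a} for $(r_j)$ into \rpart{FRL432.b}. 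The content thus lies in \rpart{FRL432.a}, where the case $n=1$ is immediate from $\ran{\su{1}}\subseteq\ran{\su{0}}$, so I may assume $n\geq2$.

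\emph{Range conditions.} From $\seqs{2n}\in\Kggqu{2n}$ I read off $\seqs{2n}\in\Hggqu{2n}$, $(\sau{j})_{j=0}^{2(n-1)}\in\Hggqu{2(n-1)}$ (\rdefn{D1413}), $\seqs{2n-2}\in\Hggequ{2n-2}$ (since $\seqs{2n}\in\Hggqu{2n}$) and $\seqs{2n-1}\in\Kggequ{2n-1}$ (\rrem{R1058}). Together with \rremp{R1240}{R1240.a}, \rlem{L0920}, \rlem{L1435}, \rlem{R1039} and \rpropp{L5.1.30.}{L5.1.30.a} this supplies everything used below: $\ran{\yuu{n}{2n-1}}\subseteq\ran{\Hu{n-1}}$, $\ran{\yuu{n-1}{2n-3}}\subseteq\ran{\Hu{n-2}}$, $\ran{\yuu{n}{2n-2}}\subseteq\ran{\Hu{n-2}}$, $\ran{\col(\sau{j})_{j=n-1}^{2n-3}}\subseteq\ran{\Hau{n-2}}$, the adjoint inclusions (the $\su{j}$ and $\sau{j}$ being Hermitian), $\Lu{n-1},\Lau{n-1}\in\Cggq$ and $\nul{\Lu{n-1}}\subseteq\nul{\Lau{n-1}}$. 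These legitimise all the Moore-Penrose manipulations below.

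\emph{The computation.} First I would express $\Lu{n}$ through $\Hu{n-2}^\mpi$ and $\Lu{n-1}^\mpi$. Partitioning $\Hu{n-1}$ as in \eqref{R3240.1}, splitting off the last block row and column of $\Hu{n}$, and applying the Schur-complement identities for Moore-Penrose inverses of block matrices valid under the above range conditions (\rlem{AEP} and the appendix), one gets
\[
 \zuu{n}{2n-1}\Hu{n-1}^\mpi\yuu{n}{2n-1}
 =\zuu{n}{2n-2}\Hu{n-2}^\mpi\yuu{n}{2n-2}+\rk{\su{2n-1}-\Mu{n-1}}^\ad\Lu{n-1}^\mpi\rk{\su{2n-1}-\Mu{n-1}},
\]
hence $\Lu{n}=\su{2n}-\zuu{n}{2n-2}\Hu{n-2}^\mpi\yuu{n}{2n-2}-\rk{\su{2n-1}-\Mu{n-1}}^\ad\Lu{n-1}^\mpi\rk{\su{2n-1}-\Mu{n-1}}$. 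To match this with the right-hand side of \rpart{FRL432.a}, I would substitute $\su{2n}=\sau{2n-1}+\ug\su{2n-1}$, $\sau{2n-2}=-\ug\su{2n-2}+\su{2n-1}$, the one-step shift relations $\col(\sau{j})_{j=n-1}^{2n-3}=-\ug\yuu{n-1}{2n-3}+\yuu{n}{2n-2}$ and $\col(\sau{j})_{j=n}^{2n-2}=-\ug\yuu{n}{2n-2}+\yuu{n+1}{2n-1}$ (together with their row analogues), and $\Hau{n-2}=-\ug\Hu{n-2}+\Ku{n-2}$ (\rrem{R1542}), noting via \rrem{R1601} that $\Hu{n-2}=\Delta_{q,n-1}^\ad\Hu{n-1}\Delta_{q,n-1}$ and $\Ku{n-2}=\Delta_{q,n-1}^\ad\Hu{n-1}\nabla_{q,n-1}$ both occur as blocks of $\Hu{n-1}$. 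Expanding $\Lau{n-1}$ and $\Mau{n-1}$ by their definitions and simplifying would then reduce \rpart{FRL432.a} to an identity involving only $\Hu{n-1}$ and its sub-blocks, which is closed off by the range inclusions (e.g.\ $\Hu{n-1}\Hu{n-1}^\mpi\yuu{n}{2n-1}=\yuu{n}{2n-1}$). Alternatively, both sides of \rpart{FRL432.a} arise as Schur complements of a single matrix obtained from $\Hu{n}$ by the block row operation sending the $j$th block row to the $(j+1)$th minus $\ug$ times the $j$th, and the equality then drops out of the quotient property of Schur complements.

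\emph{The hard part.} The delicate term is $\Hau{n-2}^\mpi=\rk{-\ug\Hu{n-2}+\Ku{n-2}}^\mpi$, which sits inside $\Lau{n-1}$ and $\Mau{n-1}$ and cannot be simplified on its own. The remedy is to carry the whole verification inside $\Hu{n-1}$: once $\Hu{n-2}$, $\Ku{n-2}$ and the $\sau{}$-column/row vectors are rewritten through $\Hu{n-1}$ and the lower bidiagonal matrix $\nabla_{q,n-1}-\ug\Delta_{q,n-1}$, the range conditions force $\Hau{n-2}^\mpi$ to act as a genuine inverse on the relevant subspaces, so that all of its occurrences telescope against $\Hu{n-1}^\mpi$ and vanish. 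The fiddliest bookkeeping is caused by the ``extra'' vector $\yuu{n+1}{2n-1}$ in the last block column of the $\sau{}$-Hankel, which inside $\Hu{n-1}$ lies one block row below $\Hu{n-2}$; controlling it is precisely where the full strength of $\ran{\yuu{n}{2n-1}}\subseteq\ran{\Hu{n-1}}$ enters. Apart from this, the argument is routine, if long.
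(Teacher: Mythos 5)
Your reduction of \rpart{FRL432.b} to \rpart{FRL432.a} via the reflection \(r_j\defeq(-1)^js_j\) --- using \rlemp{FRL4.26.}{FRL4.26.a} to convert the hypothesis, the identity \(\rk{\fourIdx{}{-\og}{}{}{r}}_j=(-1)^j\sbu{j}\), and \rlemp{L1338}{L1338.b} to translate the \(\Lu{}\)- and \(\Mu{}\)-matrices back --- is exactly the argument the paper gives for \rpart{FRL432.b}; that half is correct and matches.

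The gap is in \rpart{FRL432.a}. The paper does not prove it internally (it cites Proposition~6.4 of an earlier paper of the authors), and your self-contained attempt does not close the essential step. Your opening move is sound: under the range inclusions supplied by \(\seqs{2n}\in\Kggqu{2n}\) and \rremp{R1240}{R1240.a}, the Banachiewicz--Schur form of \(\Hu{n-1}^\mpi\) gives
\[
 \Trip{n}
 =\zuu{n}{2n-2}\Hu{n-2}^\mpi\yuu{n}{2n-2}+\rk{\su{2n-1}-\Mu{n-1}}^\ad\Lu{n-1}^\mpi\rk{\su{2n-1}-\Mu{n-1}},
\]
and the case \(n=1\) is indeed immediate. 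But after substituting \(\su{2n}=\sau{2n-1}+\ug\su{2n-1}\), what remains to be shown is precisely
\[
 \Mau{n-1}+\ug\su{2n-1}-\zuu{n}{2n-2}\Hu{n-2}^\mpi\yuu{n}{2n-2}
 =\ek*{\rk{\su{2n-1}-\Mu{n-1}}^\ad-\Lau{n-1}}\Lu{n-1}^\mpi\rk{\su{2n-1}-\Mu{n-1}},
\]
and this \emph{is} the content of the lemma, not a routine simplification. The assertion that the occurrences of \(\Hau{n-2}^\mpi\) hidden inside \(\Lau{n-1}\) and \(\Mau{n-1}\) ``telescope against \(\Hu{n-1}^\mpi\) and vanish'' is not backed by any mechanism: \(\Hau{n-2}=-\ug\Hu{n-2}+\Ku{n-2}\) is not a principal submatrix of \(\Hu{n-1}\) in a way that makes its Moore--Penrose inverse cancel, and recognising the factor \(\rk{\su{2n-1}-\Mu{n-1}}^\ad-\Lau{n-1}\) as \(\Lu{n-1}\) times an explicit matrix is, in substance, \rlemp{FR434.}{FR434.a} one level down --- which the paper also only cites from the same source. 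The whole family \rlem{FRL432.}--\rlem{FR434.} has to be established by a simultaneous induction (or by the Schur-type algorithm of the cited reference); a one-shot expansion of \rpart{FRL432.a} in isolation, as sketched, leaves the decisive identity unproved.
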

\begin{proof}
 A proof of \rpart{FRL432.a} is given in~\cite[Proposition~6.4]{MR3014201}.
 To prove \rpart{FRL432.b}, we consider an arbitrary sequence \(\seqs{2n}\in\Lggqu{2n}\).
 Setting \(r_j \defeq  (-1)^js_j\) for all \(j\in\mn{0}{2n}\), we see from \rlemp{FRL4.26.}{FRL4.26.a} that \((r_j)_{j=0}^{2n}\) belongs to \(\Kgguuu{q}{2n}{-\og}\).
 Because of \rpart{FRL432.a}, we have then 
\begin{equation}\label{MX}
 \Luo{n}{r}
 =   -\rk{-\og}r_{2n-1} +r_{2n} - \rk{\fourIdx{}{-\og}{}{}{M^{\langle r\rangle}}}_{n-1} - \rk{\fourIdx{}{-\og}{}{}{L^{\langle r\rangle}}}_{n-1}(\Luo{n-1}{r})^\mpi\rk{r_{2n-1} - \Muo{n-1}{r}} .
\end{equation}
 Obviously, \(r_{2n-1} = -\su{2n-1}\) and \(r_{2n} =\su{2n}\).
 \rlem{L1338} yields \(\Luo{\ell}{r} =  \Lu{\ell}\) for \(\ell\in\set{n-1,n}\) and \(\Muo{n-1}{r} = -\Mu{n-1}\).
 Taking into account that \eqref{DIR} holds true for all \(j\in\mn{0}{2n-1}\), from \rlem{L1338} we also get \(\rk{\fourIdx{}{-\og}{}{}{M^{\langle r\rangle}}}_{n-1} = - \Mbu{n-1}\) and \(\rk{\fourIdx{}{-\og}{}{}{L^{\langle r\rangle}}}_{n-1} = \Lbu{n-1}\).
 Thus, in view of \eqref{MX}, the proof is complete.
\end{proof}

\begin{lem}\label{FR433.}
Let \(\ug,\og\in\R\) and let \(n\in\N\).
 Then:
\benui
 \il{FR433.a} If \(\seqs{2n-1} \in\Kggequ{2n-1}\), then 
\begin{equation}\label{TP}
\Trip{n}
= \alpha\su{2n-1} +  \Mau{n-1} + \Lau{n-1}  \Lu{n-1}^\mpi  (\su{2n-1} - \Mu{n-1}) .
\end{equation}
 \il{FR433.b} If \(\seqs{2n-1} \in\Lggequ{2n-1}\), then 
\[
\Trip{n} =   \beta\su{2n-1} - \ek*{\Mbu{n-1} + \Lbu{n-1} \Lu{n-1}^\mpi  (\su{2n-1} - \Mu{n-1})}.
\]
\eenui
\end{lem}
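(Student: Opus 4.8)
The plan is to deduce both parts from \rlem{FRL432.} via the identity $\Trip{n}=\su{2n}-\Lu{n}$, which is just the definition of $\Lu{n}$ in \rnotap{N41}{N41.c}. I would first note that $\Trip{n}=\zuu{n}{2n-1}\Hu{n-1}^\mpi\yuu{n}{2n-1}$ as well as the right-hand sides of \eqref{TP} and of the formula in \rpart{FR433.b} are built solely from the section $\seqs{2n-1}$; hence it is immaterial which admissible extension $\su{2n}$ one works with, and the hypotheses $\seqs{2n-1}\in\Kggequ{2n-1}$ resp.\ $\seqs{2n-1}\in\Lggequ{2n-1}$ guarantee that such an extension exists.

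For \rpart{FR433.a} I would choose $\su{2n}\in\Cqq$ with $\seqs{2n}\in\Kggqu{2n}$, which is possible since $\seqs{2n-1}\in\Kggequ{2n-1}$, and apply \rlemp{FRL432.}{FRL432.a} to $\seqs{2n}$, thereby expressing $\Lu{n}$ in terms of $\sau{2n-1}$, $\Mau{n-1}$, $\Lau{n-1}$, $\Lu{n-1}$, $\Mu{n-1}$, and $\su{2n-1}$. Substituting $\su{2n}=\alpha\su{2n-1}+\sau{2n-1}$ (read off from \rnota{N5.1}) into $\Trip{n}=\su{2n}-\Lu{n}$, the two occurrences of $\sau{2n-1}$ cancel and \eqref{TP} results. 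For \rpart{FR433.b} I would argue in the same fashion, now picking $\su{2n}\in\Cqq$ with $\seqs{2n}\in\Lggqu{2n}$ (available because $\seqs{2n-1}\in\Lggequ{2n-1}$), applying \rlemp{FRL432.}{FRL432.b}, and using $\su{2n}=\beta\su{2n-1}-\sbu{2n-1}$ from \rnota{N5.1}; again the $\sbu{2n-1}$-terms drop out of $\Trip{n}=\su{2n}-\Lu{n}$, leaving exactly the claimed identity. As an alternative route one could instead reduce \rpart{FR433.b} to \rpart{FR433.a} by the reflection $r_j\defeq(-1)^js_j$, invoking \rlemss{FRL4.26.}{L1338} just as in the proof of \rlemp{FRL432.}{FRL432.b}, but the direct computation is shorter.

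I do not anticipate a genuine obstacle: every step is an elementary rearrangement resting on facts already established. The only point that deserves an explicit word is the observation that $\Trip{n}$ and both right-hand sides are intrinsic to $\seqs{2n-1}$, which is what makes the formulas legitimate consequences of \rlem{FRL432.} applied to any valid extension $\su{2n}$.
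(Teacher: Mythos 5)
Your proposal is correct and follows essentially the same route as the paper: choose an extension $\su{2n}$ with $\seqs{2n}\in\Kggqu{2n}$ (resp.\ $\Lggqu{2n}$), apply \rlem{FRL432.}, and combine $\Trip{n}=\su{2n}-\Lu{n}$ with $\sau{2n-1}=-\alpha\su{2n-1}+\su{2n}$ (resp.\ $\sbu{2n-1}=\beta\su{2n-1}-\su{2n}$). Your explicit remark that $\Trip{n}$ and the right-hand sides depend only on $\seqs{2n-1}$, so the choice of extension is immaterial, is a small point the paper leaves implicit.
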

\begin{proof}
 First we suppose that \(\seqs{2n-1}\in\Kggequ{2n-1}\).
 Then there is an \(\su{2n}\in\Cqq \) such that \(\seqs{2n}\in\Kggqu{2n}\).
 Using \rlemp{FRL432.}{FRL432.a}, the equation \(\Trip{n} =\su{2n} -\Lu{n}\), and the equation \(\sau{2n-1} = -\alpha\su{2n-1} +\su{2n}\), we get \eqref{TP}.
 \rPart{FR433.a} is proved.
 In view of \rlemp{FRL432.}{FRL432.b}, \rpart{FR433.b} can be checked analogously.
\end{proof}

\begin{lem}\label{FR434.}
 Let \(\ug,\og\in\R\) and let \(n\in\N\).
 Then:
\benui
 \il{FR434.a} If \(\seqs{2n+1} \in\Kggqu{2n+1}\), then 
\begin{equation*}
 \Lau{n}
 =\su{2n+1} - \Mu{n} - \Lu{n}\rk*{\alpha \Iq  + \ek*{\Lau{n-1}}^\mpi\ek*{\sau{2n-1} - \Mau{n-1}}} .
\end{equation*}
 \il{FR434.b} If \(\seqs{2n+1} \in\Lggqu{2n+1}\), then 
\begin{equation}\label{BLB}
 \Lbu{n}
 = - (\su{2n+1} - \Mu{n}) + \Lu{n}\rk*{\beta \Iq  + \ek*{\Lbu{n-1}}^\mpi\ek*{\sbu{2n-1} - \Mbu{n-1}}} .
\end{equation}
\eenui
\end{lem}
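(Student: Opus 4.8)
As in the proofs of \rlem{FRL432.} and \rlem{FR433.}, the plan is to establish part~\rpart{FR434.a} by a direct computation with block Hankel matrices and then to obtain part~\rpart{FR434.b} from it via the reflection $r_j\defeq(-1)^js_j$. For part~\rpart{FR434.a}, let $\seqs{2n+1}\in\Kggqu{2n+1}$. By \rdefn{D1413}, both $\seqs{2n}$ and $\seqsa{2n}$ are \tHnnd{}, so (applying \rnota{N41} to $\seqsa{2n}$) the matrix $\Lau{n}$ is the Schur complement of $\Hau{n-1}$ in the partition $\Hau{n}=\begin{bmatrix}\Hau{n-1}&\ast\\\ast&\sau{2n}\end{bmatrix}$, the matrices $\Lu{n}$, $\Lu{n-1}$, $\Lau{n-1}$ are \tnnH{} by \rlem{R1039}, and the range and null-space inclusions needed below are available from membership in $\Kggqu{2n+1}$ (cf.\ \rprop{L5.1.30.}). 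Inserting $\Hau{m}=-\alpha\Hu{m}+\Ku{m}$ (\rrem{R1542}), $\sau{2n}=-\alpha\su{2n}+\su{2n+1}$, and $\Trip{n}=\su{2n}-\Lu{n}$ (\rnota{N41}), the asserted identity becomes an identity for the Schur complement of $-\alpha\Hu{n-1}+\Ku{n-1}$ in $-\alpha\Hu{n}+\Ku{n}$; I would verify it by substituting the block partitions of $\Hu{n-1}$, $\Ku{n-1}$, $\Gu{n-1}$ from \rrem{R1601} and \rrem{R3240}, expanding $\Hu{n-1}^\mpi$ over its principal submatrix $\Hu{n-2}$ (which is where $\Mu{n-1}$, $\Lu{n-1}$, $\Mau{n-1}$, $\Lau{n-1}$ enter), and simplifying, the case $n=1$ being immediate from $\Hu{0}^\mpi=\su{0}^\mpi$ and $\Mau{0}=\Oqq$. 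This is the same kind of computation that underlies \rlemp{FRL432.}{FRL432.a} and \rlemp{FR433.}{FR433.a} and, like the former, may be available in~\zita{MR3014201}.

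For part~\rpart{FR434.b}, let $\seqs{2n+1}\in\Lggqu{2n+1}$ and set $r_j\defeq(-1)^js_j$ for $j\in\mn{0}{2n+1}$. By \rlemp{FRL4.26.}{FRL4.26.a} we have $(r_j)_{j=0}^{2n+1}\in\Kgguuu{q}{2n+1}{-\og}$, so part~\rpart{FR434.a}, applied with left-endpoint parameter $-\og$ to $(r_j)_{j=0}^{2n+1}$, gives
\[
\rk{\fourIdx{}{-\og}{}{}{L^{\langle r\rangle}}}_{n}=r_{2n+1}-\Muo{n}{r}-\Luo{n}{r}\rk*{-\og\Iq+\ek*{\rk{\fourIdx{}{-\og}{}{}{L^{\langle r\rangle}}}_{n-1}}^\mpi\ek*{\rk{\fourIdx{}{-\og}{}{}{r}}_{2n-1}-\rk{\fourIdx{}{-\og}{}{}{M^{\langle r\rangle}}}_{n-1}}}.
\]
Now I translate back. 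We have $r_{2n+1}=-\su{2n+1}$, and \rlem{L1338} gives $\Luo{\ell}{r}=\Lu{\ell}$ for $\ell\in\set{n-1,n}$ as well as $\Muo{n}{r}=-\Mu{n}$; moreover $\rk{\fourIdx{}{-\og}{}{}{r}}_j=(-1)^j\sbu{j}$ by \eqref{DIR}, so \rlem{L1338} applied to $\seqsb{2n}$ (whose reflection is $\rk{\fourIdx{}{-\og}{}{}{r}}_j$) also yields $\rk{\fourIdx{}{-\og}{}{}{L^{\langle r\rangle}}}_{n-1}=\Lbu{n-1}$, $\rk{\fourIdx{}{-\og}{}{}{L^{\langle r\rangle}}}_{n}=\Lbu{n}$, $\rk{\fourIdx{}{-\og}{}{}{M^{\langle r\rangle}}}_{n-1}=-\Mbu{n-1}$, and $\rk{\fourIdx{}{-\og}{}{}{r}}_{2n-1}=-\sbu{2n-1}$. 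Substituting these and collapsing the signs (the inner bracket becomes $-\ek*{\sbu{2n-1}-\Mbu{n-1}}$, and $-\og\Iq$ together with the leading minus sign in front of $\Luo{n}{r}$ produces $+\beta\Iq$) turns the display into
\[
\Lbu{n}=-(\su{2n+1}-\Mu{n})+\Lu{n}\rk*{\beta\Iq+\ek*{\Lbu{n-1}}^\mpi\ek*{\sbu{2n-1}-\Mbu{n-1}}},
\]
which is \eqref{BLB}.

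The main obstacle is part~\rpart{FR434.a}: the block-Hankel/Schur-complement bookkeeping is lengthy, and the delicate point is that each Moore--Penrose manipulation (in particular $\Hu{n-1}\Hu{n-1}^\mpi\Ku{n-1}=\Ku{n-1}$ and its adjoint) has to be legitimized by the range and null-space structure carried by $\Kggqu{2n+1}$, so that no residual projector terms spoil the formula. Once part~\rpart{FR434.a} is in place, part~\rpart{FR434.b} is the routine sign-chase displayed above.
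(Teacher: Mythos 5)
Your proof of part~\eqref{FR434.b} is exactly the paper's argument: reflect via $r_j=(-1)^js_j$, invoke \rlemp{FRL4.26.}{FRL4.26.a} to land in $\Kgguuu{q}{2n+1}{-\og}$, apply part~\eqref{FR434.a} with left endpoint $-\og$, and translate back using \eqref{DIR} and \rlem{L1338}; your sign bookkeeping checks out. For part~\eqref{FR434.a} the paper gives no computation at all but simply cites \cite[Proposition~6.5]{MR3014201}, which is the fallback you yourself identify, so your (admittedly only sketched) Schur-complement derivation is extra rather than a deviation.
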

\begin{proof}
 \rPart{FR434.a} is proved in ~\cite[Proposition~6.5]{MR3014201}.
 In order to check \rpart{FR434.b}, we consider an arbitrary sequence \(\seqs{2n+1}\in\Lggqu{2n+1}\).
 Let \(r_j \defeq  (-1)^j s_j\) for each \(j\in\mn{0}{2n+1}\).
 According to \rlemp{FRL4.26.}{FRL4.26.a}, the sequence  \((r_j)_{j=0}^{2n+1}\) belongs to \(\Kgguuu{q}{2n+1}{-\og}\).
 Thus, \rpart{FR434.a} provides us 
\begin{equation}\label{CFHGR}
 \rk{\fourIdx{}{-\og}{}{}{L^{\langle r\rangle}}}_n
 =  r_{2n+1} - \Mu{n}^{\langle r\rangle} - \Luo{n}{r} \rk*{ (-\beta) \Iq  + \rk{\fourIdx{}{-\og}{}{}{L^{\langle r\rangle}}}_{n-1}\ek*{\rk{\fourIdx{}{-\og}{}{}{r}}_{2n-1} - \rk{\fourIdx{}{-\og}{}{}{M^{\langle r\rangle}}}_{n-1}}} .
\end{equation}
 Obviously, \(r_{2n+1} = -\su{2n+1}\) and, by \eqref{DIR}, furthermore \(\rk{\fourIdx{}{-\og}{}{}{r}}_{2n-1} = -\sbu{2n-1}\).
 \rlem{L1338} yields \(\Muo{n}{r} = -\Mu{n}\) and \(\Luo{n}{r} = \Lu{n}\).
 Since \eqref{DIR} holds true for all \(j\in\mn{0}{2n}\), \rlem{L1338} also shows that \(\rk{\fourIdx{}{-\og}{}{}{L^{\langle r\rangle}}}_\ell=  \Lbu{\ell}\) for \(\ell\in\set{n-1,n}\) and that \(\rk{\fourIdx{}{-\og}{}{}{M^{\langle r\rangle}}}_{n-1}=-\Mbu{n-1}\).
 Thus, \eqref{BLB} follows from \eqref{CFHGR}.
\end{proof}

\begin{lem}\label{FR435.}
 Let \(\ug,\og\in\R\) and let \(n\in\N\).
 Then:
\benui
 \il{FR435.a} If \(\seqs{2n} \in\Kggequ{2n}\), then 
\begin{equation}\label{FT1}
\Tripa{n}
= -\alpha\su{2n} + \Mu{n} + \Lu{n}\rk*{\alpha \Iq  + \ek*{\Lau{n-1}}^\mpi\ek*{\sau{2n-1} - \Mau{n-1}}} .
\end{equation}
 \il{FR435.b} If \(\seqs{2n} \in\Lggequ{2n}\), then 
\begin{equation}\label{FT2}
 \Tripb{n}
 =  \beta\su{2n} -\ek*{\Mu{n} + \Lu{n}\rk*{\beta \Iq  + \ek*{\Lbu{n-1}}^\mpi\ek*{\sbu{2n-1} - \Mbu{n-1}}}}.
\end{equation}
\eenui
\end{lem}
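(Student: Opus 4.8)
The plan is to reduce both parts to \rlem{FR434.}, combining it with the defining relation $\Trip{}=\su{2n}-\Lu{}$ from \rnotap{N41}{N41.c} and the links between $\su{2n}$, $\su{2n+1}$, and $\sau{2n}$ (resp.\ $\sbu{2n}$) recorded in \rnota{N5.1}.

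For \rpart{FR435.a}, I would first invoke \rdefn{D1413}: since $\seqs{2n}\in\Kggequ{2n}$, there is an $\su{2n+1}\in\Cqq$ with $\seqs{2n+1}\in\Kggqu{2n+1}$, so that \rlemp{FR434.}{FR434.a} becomes available and expresses $\Lau{n}$ in terms of $\su{2n+1}$, $\Mu{n}$, $\Lu{n}$, $\Lau{n-1}$, $\sau{2n-1}$, and $\Mau{n-1}$. Applying the last part of \rnotap{N41}{N41.c} to the sequence $\seqsa{2n}$ gives $\Lau{n}=\sau{2n}-\Tripa{n}$, whence $\Tripa{n}=\sau{2n}-\Lau{n}$. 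Substituting $\sau{2n}=-\alpha\su{2n}+\su{2n+1}$ and the above expression for $\Lau{n}$, the two occurrences of $\su{2n+1}$ cancel, and after collecting the remaining terms one reads off \eqref{FT1}. (In particular this confirms that $\Tripa{n}$ does not depend on the choice of $\su{2n+1}$, as it must not.)

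For \rpart{FR435.b} I would argue in the same way, now using \rlemp{FR434.}{FR434.b}---available since $\seqs{2n}\in\Lggequ{2n}$ provides, via \rdefn{D1420}, an $\su{2n+1}$ with $\seqs{2n+1}\in\Lggqu{2n+1}$---together with $\Tripb{n}=\sbu{2n}-\Lbu{n}$ and $\sbu{2n}=\beta\su{2n}-\su{2n+1}$; once more the $\su{2n+1}$-terms cancel and \eqref{FT2} results. Alternatively, \rpart{FR435.b} can be deduced from \rpart{FR435.a} by the reflection substitution $r_j\defeq(-1)^js_j$ used repeatedly above: pass to $(r_j)_{j=0}^{2n}\in\Kggeuuu{q}{2n}{-\og}$ by \rlemp{FRL4.26.}{FRL4.26.b}, apply \rpart{FR435.a} to this sequence, and translate the outcome back with \rlem{L1338} and \eqref{DIR}, exactly as in the proof of \rpart{FR434.b}.

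No real obstacle is expected; the only points that require care are the sign bookkeeping which makes the $\su{2n+1}$-contributions cancel, and the observation that $\seqs{2n}\in\Kggequ{2n}$ (resp.\ $\seqs{2n}\in\Lggequ{2n}$) is precisely the hypothesis that allows \rlemp{FR434.}{FR434.a} (resp.\ \rlemp{FR434.}{FR434.b}) to be applied.
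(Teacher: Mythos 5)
Your proposal is correct and follows exactly the paper's route: the paper's proof is the one-line instruction to choose $\su{2n+1}$ with $\seqs{2n+1}\in\Kggqu{2n+1}$ (resp.\ $\Lggqu{2n+1}$) and apply \rlem{FR434.}, which is precisely your argument with the algebra (the cancellation of $\su{2n+1}$ via $\Tripa{n}=\sau{2n}-\Lau{n}$ and $\sau{2n}=-\alpha\su{2n}+\su{2n+1}$) written out. The reflection-based alternative you sketch for \rpart{FR435.b} is a valid second route but is not needed.
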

\begin{proof}
 Choose a matrix \(\su{2n+1}\in\Cqq \) such that \(\seqs{2n+1}\) belongs to \(\Kggqu{2n+1}\) (resp.\ \(\Lggqu{2n+1}\)) and apply \rlem{FR434.}.
\end{proof}

\begin{lem}\label{FR428.}
 Let \(\ug,\og\in\R\) and let \(\kappa\in\Ninf\).
 Then:
\benui
 \il{FR428.a} If \(\seqska  \in\Kggqu{\kappa}\), then \((\sau{j})_{j=0}^m\in \Kggqu{m}\) for all \(m\in\mn{0}{\kappa -1}\).
 \il{FR428.b}  If \(\seqska  \in\Lggqu{\kappa}\), then \((\sbu{j})_{j=0}^m\in \Lggqu{m}\) for all \(m\in\mn{0}{\kappa -1}\).
\eenui
\end{lem}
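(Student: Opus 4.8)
The plan is to prove part~(a) directly from \rdefnss{D1413}{D1238} and to deduce part~(b) from part~(a) by reflection. For part~(a), observe first that, by \rdefn{D1413}, the assertion $(\sau{j})_{j=0}^m\in\Kggqu{m}$ amounts to the requirement that two kinds of block Hankel matrices be \tnnH{}: certain of the matrices $\Hau{\lambda}$ (which are the block Hankel matrices of the sequence $\seqsa{\kappa-1}$ itself) and, when the extra clause of \rdefn{D1413} is present, certain block Hankel matrices of the $\ug$-shifted sequence with general term $-\ug\sau{j}+\sau{j+1}$. Because $-\ug\sau{j}+\sau{j+1}=\ug^2\su{j}-2\ug\su{j+1}+\su{j+2}$ for real $\ug$, these latter matrices are exactly the matrices $\ug^2\Hu{\lambda}-2\ug\Ku{\lambda}+\Gu{\lambda}$.

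The crucial point is that each matrix $\ug^2\Hu{\lambda}-2\ug\Ku{\lambda}+\Gu{\lambda}$ is a congruence transform of a \emph{single} ordinary block Hankel matrix of $\seqska$. Reading off the block relations $\Delta_{q,\lambda+1}^\ad\Hu{\lambda+1}\Delta_{q,\lambda+1}=\Hu{\lambda}$, $\Delta_{q,\lambda+1}^\ad\Hu{\lambda+1}\nabla_{q,\lambda+1}=\nabla_{q,\lambda+1}^\ad\Hu{\lambda+1}\Delta_{q,\lambda+1}=\Ku{\lambda}$, and $\nabla_{q,\lambda+1}^\ad\Hu{\lambda+1}\nabla_{q,\lambda+1}=\Gu{\lambda}$ from \rrem{R1601} (cf.\ the proof of \rlem{M8.1.19.}), one obtains, for real $\ug$ and each $\lambda\in\NO$ with $2\lambda+2\le\kappa$,
\[
 \ug^2\Hu{\lambda}-2\ug\Ku{\lambda}+\Gu{\lambda}
 =\rk{\nabla_{q,\lambda+1}-\ug\Delta_{q,\lambda+1}}^\ad\Hu{\lambda+1}\rk{\nabla_{q,\lambda+1}-\ug\Delta_{q,\lambda+1}}.
\]
Hence this matrix is \tnnH{} as soon as $\Hu{\lambda+1}$ is (by \rrem{R0705}).

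To complete part~(a), note that every section $(\su{j})_{j=0}^\ell$ with $\ell\in\mn{0}{\kappa}$ belongs to $\Kggqu{\ell}$: for $\ell<\kappa$ this follows from \rremss{R1058}{R1447}, for $\ell=\kappa$ it is the hypothesis, and if $\kappa=\infi$ it is the definition of $\Kggqinfa$. Consequently, by \rdefnss{D1413}{D1238}, $\Hu{\lambda}$ is \tnnH{} whenever $2\lambda\le\kappa$, and $\Hau{\lambda}$ is \tnnH{} whenever $2\lambda+1\le\kappa$. Now fix $m\in\mn{0}{\kappa-1}$; a short case distinction on the parity of $m$ shows that \rdefn{D1413} requires exactly one matrix $\Hau{\lambda}$ with $2\lambda+1\le\kappa$ to be \tnnH{} and, if the extra clause is present, exactly one matrix $\ug^2\Hu{\lambda}-2\ug\Ku{\lambda}+\Gu{\lambda}$ with $2\lambda+2\le\kappa$ to be \tnnH{} (indeed $2(\lambda+1)$ equals $m+1$ or $m$ according to the parity of $m$, and in both cases is $\le\kappa$). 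By the two observations just made together with the displayed congruence identity, all these matrices are \tnnH{}, so $(\sau{j})_{j=0}^m\in\Kggqu{m}$. Since only $\ug\in\R$ was used, part~(a) holds for an arbitrary real number in the role of $\ug$.

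For part~(b), let $\seqska\in\Lggqu{\kappa}$ and put $r_j\defeq(-1)^j\su{j}$. By \rlemp{FRL4.26.}{FRL4.26.a} we have $(r_j)_{j=0}^\kappa\in\Kgguuu{q}{\kappa}{-\og}$; applying part~(a) with $\ug$ replaced by the real number $-\og$ gives $(-(-\og)r_j+r_{j+1})_{j=0}^m\in\Kgguuu{q}{m}{-\og}$ for every $m\in\mn{0}{\kappa-1}$. Since $-(-\og)r_j+r_{j+1}=\og(-1)^j\su{j}+(-1)^{j+1}\su{j+1}=(-1)^j\sbu{j}$, a further application of \rlemp{FRL4.26.}{FRL4.26.a} yields $(\sbu{j})_{j=0}^m\in\Lggqu{m}$. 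The only nonroutine step is the congruence identity above — and the realization that it already delivers the ``double-shift'' positivity merely from the plain Hankel positivity of $\seqska$; everything else is bookkeeping with indices.
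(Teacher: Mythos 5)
Your proof is correct, but it takes a genuinely different route from the paper for part~\eqref{FR428.a}: the paper does not prove~\eqref{FR428.a} internally at all, but cites \cite[Proposition~5.1]{MR3133464} (in view of \rremss{R1058}{R1447}), whereas you give a self-contained argument. Its crux is the observation that the block Hankel matrices of the doubly shifted sequence \(\rk{-\ug\sau{j}+\sau{j+1}}_j\) are \(\ug^2\Hu{\lambda}-2\ug\Ku{\lambda}+\Gu{\lambda}=\rk{\nabla_{q,\lambda+1}-\ug\Delta_{q,\lambda+1}}^\ad\Hu{\lambda+1}\rk{\nabla_{q,\lambda+1}-\ug\Delta_{q,\lambda+1}}\), which is a ``\(\beta=\alpha\)'' sibling of the identities in \rlem{M8.1.19.} and follows from the block relations in \rrem{R1601}; by \rrem{R0705} this matrix inherits \tnnH{}ness from \(\Hu{\lambda+1}\) alone, so the second-level shifted positivity required by \rdefn{D1413} for \((\sau{j})_{j=0}^m\) comes for free from the plain Hankel positivity of the sections of \(\seqska\), while the first-level condition \(\Hau{\lambda}\lgeq0\) is supplied directly by the sections being in \(\Kggqu{\ell}\) (via \rremss{R1058}{R1447}, the hypothesis, resp.\ the definition of \(\Kggqinfa\)); your index bookkeeping checks out in both parity cases. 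Part~\eqref{FR428.b} is then obtained by exactly the reflection argument the paper uses (via \rlemp{FRL4.26.}{FRL4.26.a} and the computation \(-(-\og)r_j+r_{j+1}=(-1)^j\sbu{j}\)). What your approach buys is a proof that stays entirely within the present paper's toolkit; what the paper's citation buys is brevity.
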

\begin{proof}
 \eqref{FR428.a} In view of \rremss{R1058}{R1447}, \rpart{FR428.a} is proved in~\cite[Proposition~5.1]{MR3133464}.

 \eqref{FR428.b} Use \rlem{FRL4.26.} and \rpart{FR428.a}.
\end{proof}

 The following result gives us a first impression of the interplay between \(\Fggqu{m}\) and the classes studied in this section.
\bpropl{L8.1.35.}
 Let \(\ug\in\R\), let \(\og\in(\ug,\infp)\), and let \(n\in\NO\).
 Then \(\Fggqu{2n+1} = \Kggqu{2n+1} \cap \Lggqu{2n+1}\).
\eprop
\begin{proof}
 Let \(\seqs{2 n +1} \in \Fggqu{2 n+1}\).
 Then \rdefnp{D1159}{D1159.a} shows that \(\set{(\sau{j})_{j=0}^{2 n},(\sbu{j})_{j=0}^{2 n}} \subseteq \Hggqu{2 n}\).
 Moreover, from \rlem{L1307} we know that \(\seqs{2 n}\in \Hggqu{2 n}\).
 Hence, \(\seqs{2 n +1}\) belongs to \(\Kggqu{2 n+1} \cap  \Lggqu{2 n+1}\).
 
 Conversely, now assume that \(\seqs{2 n +1}\) belongs to \(\Kggqu{2 n+1}\cap  \Lggqu{2 n+1}\).
 Then the definitions of the sets \(\Kggqu{2 n+1}\) and \(\Lggqu{2 n+1}\) in combination with \rdefnp{D1159}{D1159.a} yield immediately \(\seqs{2 n + 1}\in \Fggqu{2 n+1}\).
\end{proof}

\section{On the inclusion $\Fggqu{m}\subseteq\Kggequ{m}\cap\Lggequ{m}$}\label{S0746}
 In this section we investigate interrelations between the set \(\Fggqu{m}\) on the one hand and the intersection \(\Kggequ{m}\cap\Lggequ{m}\) on the other hand.

\bpropl{L8.1.24.}
 Let \(\ug\in\R\), let \(\og\in(\ug,\infp)\), let \(\kappa\in\Ninf\), and let \(\seqs{\kappa} \in\Fggqu{\kappa}\).
 Then:
 \benui
  \il{L8.1.24.a} \(\set{(\sau{j})_{j=0}^{\kappa - 1}, (\sbu{j})_{j-0}^{\kappa -1}} \subseteq \Fggqu{\kappa-1}\).
  \il{L8.1.24.b} If  \(\kappa\ge 2\), then \((\scu{j})_{j=0}^{\kappa - 2} \in \Fggqu{\kappa-2}\).
 \eenui
\eprop
\begin{proof}
 \eqref{L8.1.24.a} First we consider the case that \(\kappa = 2n+1\) with some \(n\in\NO \).
 Then \rdefnp{D1159}{D1159.a} yields \(\set{ \Hau{n}, \Hbu{n}} \subseteq \Cggo{(n+1)q}\).
 If \(n=0\), then~\eqref{L8.1.24.a} is proved.
 Assume \(n\ge 1\).
 Then \rpropp{M8.1.21.}{M8.1.21.a} provides us \(\seqs{2n}\in\Fggqu{2n}\) and, consequently, \(\Hcu{n-1} \in\Cggo{nq}\).
 From \rrem{R1539} we have  \( \Hcu{n-1} =  (_\alpha (H_\beta))_{n-1} = ((_\alpha H)_\beta)_{n-1}\).
 Thus,  \((\sbu{j})_{j=0}^{2n} \in \Kggqu{2n}\) and  \((\sau{j})_{j=0}^{2n}\in\Lggqu{2n}\).
 Applying \rlemp{FR428.}{FR428.a} to  \((\sbu{j})_{j=0}^{2n}\), from \rrem{R5.2.} we get  \((\scu{j})_{j=0}^{2n-1} \in \Kggqu{2n-1}\).
 In  particular, \(-\alpha \Hcu{n-1} + \Kcu{n-1}\in\Cggo{nq}\).
 Taking into account that \rrem{R5.2.} shows that \(-\alpha \scu{j} + \scu{j+1} = -\alpha\beta \sau{j} + (\alpha +\beta) \sau{j+1} - \sau{j+2}\) holds true for all \(j\in\mn{0}{2n-2}\), then \(-\alpha\beta \Hau{n-1} + (\alpha+\beta) \Kau{n-1} - \Gau{n-1} \in \Cggo{nq}\) and  \((\sau{j})_{j=0}^{2n}\in \Fggqu{2n}\) follow.
 Applying \rlemp{FR428.}{FR428.b} to the sequence \((\sau{j})_{j=0}^{2n}\), we obtain from \rrem{R5.2.} that \((\scu{j})_{j=0}^{2n-1}\) belongs to \(\Lggqu{2n-1}\).
 In particular, \(\beta \Hcu{n-1} - \Kcu{n-1} \in\Cggo{nq}\).
 Since \rrem{R5.2.} yields \(\beta \scu{j} - \scu{j+1} = -\alpha\beta \sbu{j} + (\alpha +\beta) \sbu{j+1} - \sbu{j+2}\) for all \(j\in\mn{0}{2n-2}\), we conclude  then \(-\alpha\beta \Hbu{n-1} + (\alpha + \beta) \Kbu{n-1} - \Gbu{n-1} \in \Cggo{nq}\) and   \((\sbu{j})_{j=0}^{2n}\in \Fggqu{2n}\) and, because of \rrem{R5.2.}, furthermore   \((\scu{j})_{j=0}^{2n-1} \in \Fggqu{2n-1}\).

 Now we consider the case that \(\kappa = 2n\) with some \(n\in\N\).
 Then the matrices \(\Hu{n}\) and \(\Hcu{n-1}\) are both \tnnH{}.
 \rpropp{M8.1.21.}{M8.1.21.a} provides us \(\seqs{2n-1}\in\Fggqu{2n-1}\).
 Hence, \(\set{\Hau{n-1} \Hbu{n-1}} \in\Cggo{nq}\).
 Thus, \(\seqs{2n} \in \Kggqu{2n} \cap  \Lggqu{2n}\).
 Applying  \rlemp{FR428.}{FR428.a} to  \(\seqs{2n}\), we infer \((\sau{j})_{j=0}^{2n-1} \in \Kggqu{2n-1}\).
 Thus, \(-\alpha \Hau{n-1} + \Kau{n-1}\in\Cggo{nq}\).
 Since \rrem{R1539} shows that \(\beta \Hau{n-1} -  \Kau{n-1} =  \Hcu{n-1}\) is valid, from \rdefnp{D1159}{D1159.a}  we get  \((\sau{j})_{j=0}^{2n-1} \in \Fggqu{2n-1}\).
 \rlemp{FR428.}{FR428.b} provides us \(\sbu{j=0}^{2n-1}\in\Lggqu{2n-1}\).
 Consequently, the matrix \(\beta \Hbu{n-1} - \Kbu{n-1}\) is \tnnH{}.
 Since \rrem{R1539} shows that \( -\alpha \Hbu{n-1} + \Kbu{n-1} = \Hcu{n-1}\) holds true, then \rdefnp{D1159}{D1159.a} shows that  \((\sbu{j})_{j=0}^{2n-1} \in\Fggqu{2n-1}\).
 
 \eqref{L8.1.24.b} In view of \rrem{R5.2.}, this follows by applying \rpart{L8.1.24.a} to one of the sequences \((\sau{j})_{j=0}^{\kappa-1}\) or \((\sbu{j})_{j=0}^{\kappa-1}\) which, according to \rpart{L8.1.24.a}, belong to \(\Fggqu{\kappa-1}\).
\end{proof}

 Obviously, \(\Fggqu{0} = \Kggequ{0} \cap \Lggequ{0}\) and \(\Fggqu{0} =\Kggqu{0} \cap \Lggqu{0}\) for all \(\alpha,\beta\in\R\) with \(\alpha < \beta\).

\bpropl{L8.1.33.}
 Let \(\ug\in\R\) and let \(\og\in(\ug,\infp)\).
 Then \(\Fggqu{m} \subseteq \Kggequ{m} \cap \Lggequ{m}\) for all \(m\in\NO \).
\eprop
\begin{proof}
 The case \(m=0\) is trivial.
 Let \(m\in\N\) and let \(\seqs{m}\in \Fggqu{m}\).
 
 We consider now the case that \(m=2n+1\) with some \(n\in\NO \).
 From \rlem{FR5.3.} we know that  \(\seqs{2n+1} \in \Hggequ{2n+1}\).
 Furthermore, \rdefnp{D1159}{D1159.a} yields \(\set{(\sau{j})_{j=0}^{2n}, (\sbu{j})_{j=0}^{2n}}\subseteq \Hggqu{2n}\).
 Thus, \rrem{FRL425} shows that \(\seqs{2n+1}\) belongs to \(\Kggequ{2n+1} \cap\Lggequ{2n+1} \).

 It remains to consider the case that \(m=2n\) with some \(n\in\N\).
 Since \(\seqs{2n}\) belongs to \(\Fggqu{2n}\), from \rdefnp{D1159}{D1159.b}  we have \(\seqs{2n}\in \Hggqu{2n}\).
 \rpropp{L8.1.24.}{L8.1.24.a} yields \(\set{(\sau{j})_{j=0}^{2n-1}, (\sbu{j})_{j=0}^{2n-1}}\subseteq \Fggqu{2n-1}\).
 Consequently, \rlem{FR5.3.} provides us \(\set{(\sau{j})_{j=0}^{2n-1}, (\sbu{j})_{j=0}^{2n-1}}\subseteq \Hggequ{2n-1}\).
Hence, \rrem{FRL425} yields that \(\seqs{2n}\) belongs to \(\Kggequ{2n} \cap\Lggequ{2n}\).
\end{proof}

\begin{proof}[Alternative proof of \rprop{L8.1.33.}]
 Let \(\seqs{m}\in\Fggqu{m}\).
 In view of \rthm{LHP}, then \(\MggqFs{m}\neq\emptyset\).
 Defining \(\sigma_{[\ug]}\colon\BorK\to\Cqq\) by \(\sigma_{[\ug]}(B)\defeq\sigma(B\cap\ab)\) and $\sigma^{[\og]}\colon\BorL\to\Cqq$ by \(\sigma^{[\og]}(C)\defeq\sigma(C\cap\ab)\), we immediately see \(\sigma_{[\ug]}\in\MggqKsg{m}\) and $\sigma^{[\og]}\in\MggqLsg{m}\), resp.
 Thus, \rthmss{T1306}{T1328} provide $\seqs{m}\in\Kggequ{m}$ and $\seqs{m}\in\Lggequ{m}$.
\end{proof}

\bpropl{L8.1.34.}
 Let \(\ug\in\R\), let \(\og\in(\ug,\infp)\), and let \(n\in\NO\).
 Then \(\Fggqu{2n+1} = \Kggequ{2n+1} \cap \Lggequ{2n+1}\).
\eprop
\begin{proof}
 In view of \rdefnp{D1159}{D1159.a}, \rrem{FRL425} provides us \(\Kggequ{2n+1} \cap \Lggequ{2n+1} \subseteq \Fggqu{2n+1}\).
 Thus, the application of \rprop{L8.1.33.} completes the proof.
\end{proof}

 In view of  \rprop{L8.1.33.}, the following example shows that \(\Fgguuuu{1}{2}{\ug}{\og}\) is a proper subset of \(\Kggeuuu{1}{2}{\ug}\cap\Lggeuuu{1}{2}{\og}\) if \(\ug\) and \(\og\) are real numbers such that \(\ug <\og\).

\begin{ex}\label{E0844}
 Let \(\ug\in\R\), let \(\og\in(\ug,\infp)\), and let
 \begin{align*}
  \su{0}&\defeq  4,&
  \su{1}&\defeq  2(\alpha +\beta),&
 &\text{and}&
  \su{2}&\defeq  (\alpha +\beta)^2 + 3\ba^2.
 \end{align*}
 Then  \(\Hu{1} =\tmat{2 & 0\\ \alpha +\beta & \sqrt{3} \ba }\tmat{2 & 0\\ \alpha +\beta & \sqrt{3} \ba  }^\ad\) is \tpH{} and \(\Lu{1} = 3\ba\), \(\Hau{0} = \Lau{0} = 2 \ba\), and  \(\Hbu{0} = \Lbu{0} = 2 \ba\) are positive.
 Thus, \rpartss{L5.1.30.c}{L5.1.30.d} of \rprop{L5.1.30.} show that \(\seqs{2}\) belongs to \( \Kggeuuu{1}{2}{\ug} \cap \Lggeuuu{1}{2}{\og}\).
 However, taking into account that \(\Hcu{0} = -2 \ba ^2\) is negative, we see that \(\seqs{2} \notin \Fgguuuu{1}{2}{\ug}{\og}\).
\end{ex}

\section{On the structure of \habHnnd{} sequences}\label{S0747}
 In this section, we discuss essential aspects of the structure of the elements of the set \(\Fggqu{m}\).
 In particular, we will see that each element of such a sequence varies within a closed matricial subinterval of \(\CHq\) the endpoints of which are completely determined by the preceding elements of the sequence.
 First we introduce two types of matricial intervals:

 Let \(A,B\in\CHq\).
 Then we set\index{\([A,B]\defeq\setaa{X\in\CHq}{A\lleq X\lleq B}\)}\index{\((A,B)\defeq\setaa{X\in\CHq}{A<X<B}\)}
 \begin{align*}
  [A,B]&\defeq\setaa{X\in\CHq}{A\lleq X\lleq B}&
 &\text{and}&
  (A,B)&\defeq\setaa{X\in\CHq}{A<X<B}.
 \end{align*}
 The matrix \(A\) (resp.\ \(B\)) is called the \emph{matricial left} (resp.\ \emph{right}) \emph{endpoint} of \([A,B]\) or \((A,B)\).
 The matrix \(C\defeq\frac{1}{2}\rk{A+B}\) (resp.\ \(D\defeq B-A\)) is called the \emph{matricial midpoint} (resp.\ \emph{length}) of \([A,B]\) or \((A,B)\).

 The following result indicates that closed and open matricial intervals are intimately related to closed and open matricial matrix balls.
 
\bleml{L0719}
 Let \(A,B\in\CHq\) and let \(D\defeq B-A\).
 Then:
 \benui
  \il{L0719.a} \([A,B]\neq\emptyset\) if and only if \(D\in\Cggq\).
 In this case, 
  \[
   [A,B]
   =\setaa*{A+\sqrt{D}K\sqrt{D}}{K\in[\Oqq,\Iq]}.
  \]
  \il{L0719.b} \((A,B)\neq\emptyset\) if and only if \(D\in\Cgq\).
 In this case, 
 \[
   (A,B)
   =\setaa*{A+\sqrt{D}K\sqrt{D}}{K\in(\Oqq,\Iq)}.
  \]
 \eenui
\elem
\bproof
 \eqref{L0719.a} If \([A,B]\neq\emptyset\), then there exists a matrix \(X\in\CHq\) with \(A\lleq X\lleq B\).
 In particular, \(A\lleq B\), \ie{}, \(D\in\Cggq\). Conversely, assume \(D\in\Cggq\).
 Obviously, \([\Oqq,\Iq]\neq\emptyset\).
 Let \(K\in[\Oqq,\Iq]\).
 Then \(X\defeq A+\sqrt{D}K\sqrt{D}\) is \tH{}.
 By virtue of \rrem{R0705}, furthermore \(X-A=\sqrt{D}K\sqrt{D}\lgeq\Oqq\) and \(B-X=D-\sqrt{D}K\sqrt{D}=\sqrt{D}\rk{\Iq-K}\sqrt{D}\lgeq\Oqq\).
 Hence, \(X\in[A,B]\).
 In particular, \([A,B]\neq\emptyset\).
 Now let \(X\in[A,B]\).
 In view of \rrem{R0746}, then \(K\defeq\sqrt{D}^\mpi\rk{X-A}\sqrt{D}^\mpi\) is \tH{} and we have
 \beql{L0719.1}
  \Oqq
  \lleq X-A
  \lleq B-A
  =D.
 \eeq
 Hence, \(\ran{X-A}\subseteq\ran{D}\) and \(\nul{D}\subseteq\nul{X-A}\) by virtue of \rrem{R0736}.
 According to \rrem{R1546}, thus \(\ran{X-A}\subseteq\ran{\sqrt D}\) and \(\nul{\sqrt D}\subseteq\nul{X-A}\).
 Using \rremss{MA.7.8.}{A.7.8-1.}, we obtain then \(\sqrt{D}K\sqrt{D}=X-A\).
 Hence, \(X=A+\sqrt{D}K\sqrt{D}\).
 With \rrem{R0705}, we conclude from \eqref{L0719.1} that \(\Oqq\lleq K\lleq\sqrt{D}^\mpi D\sqrt{D}^\mpi\).
 Using \rremss{R0746}{R0757} and \rprop{PA*2}, we have
 \[
  \sqrt{D}^\mpi D\sqrt{D}^\mpi
  =\sqrt{D}^\mpi\sqrt{D}\sqrt{D}\sqrt{D}^\mpi
  =\sqrt{D}\sqrt{D}^\mpi\sqrt{D}\sqrt{D}^\mpi
  =\sqrt{D}\sqrt{D}^\mpi
  =\OPu{\ran{\sqrt{D}}}
  \lleq\Iq.
 \]
 Thus, \(\Oqq\lleq K\lleq\Iq\), \ie{}, \(K\in[\Oqq,\Iq]\).
 
 \eqref{L0719.b} Since every \tpH{} matrix is invertible, this can be shown more easy in a similar manner.
\eproof

 From \rlem{L0719} we conclude the following result:
\breml{R1503}
 Let \(A,B\in\CHq\) and let \(C\defeq\frac{1}{2}\rk{A+B}\) and \(D\defeq B-A\).
 Then:
 \benui
  \il{R1503.a} If \([A,B]\neq\emptyset\), then \(\set{A,C,B}\subseteq\setaa{A+\eta D}{\eta\in[0,1]}\subseteq[A,B]\).
  \il{R1503.b} If \((A,B)\neq\emptyset\), then \(C\in\setaa{A+\eta D}{\eta\in(0,1)}\subseteq(A,B)\).
 \eenui
\erem
 
 Now we are going to introduce those sequences of matrices which will be needed to describe that sequences of matricial intervals which turn out to be associated with an \tabHnnd{} sequence:
 
\bdefnl{D1719}
 Let \(\ug\in\R\), let \(\og\in(\ug,\infp)\), let \(\kappa\in\NOinf\), and let  \(\seqs{\kappa}\in\Fggqu{\kappa}\). 
 For all \(k\in\NO \) with \(2k\le \kappa\), let\index{a@\(\umg{2k} \defeq  \alpha\su{2k} + \Tripa{k}\)}\index{b@\(\omg{2k}\defeq  \beta\su{2k} - \Tripb{k}\)}
 \begin{align*}
  \umg{2k}&\defeq  \alpha\su{2k} + \Tripa{k}&
  &\text{and}&
  \omg{2k}&\defeq  \beta\su{2k} - \Tripb{k}.
 \end{align*}
 For all \(k\in\NO \) with \(2k + 1\le\kappa\), let\index{a@\(\umg{2k+1} \defeq\Trip{k+1}\)}\index{b@\(\omg{2k+1} \defeq -\alpha\beta\su{2k} + (\alpha +\beta)\su{2k+1} -\Tripc{k}\)}
 \begin{align*}
  \umg{2k+1}&\defeq\Trip{k+1}&
  &\text{and}&
  \omg{2k+1}&\defeq -\alpha\beta\su{2k} + (\alpha +\beta)\su{2k+1} -\Tripc{k}.
 \end{align*}
 Then the sequences \(\seq{\umg{j}}{j}{0}{\kappa}\) and \(\seq{\omg{j}}{j}{0}{\kappa}\) are called the \notii{\tsequmgo{\(\seqs{\kappa}\)}} and the \notii{\tseqomgo{\(\seqs{\kappa}\)}}, resp.
\edefn

\begin{rem}\label{R1531}
 Let \(\ug\in\R\), let \(\og\in(\ug,\infp)\), let \(\kappa\in\NOinf\), and let \(\seqs{\kappa} \in\Fggqu{\kappa}\).
 Then \(\umg{0} =\ug \su{0}\) and \(\omg{0} = \og \su{0}\).
 If \(\kappa\geq1\), further \(\umg{1} = \su{1} \su{0}^\mpi  \su{1}\) and \(\omg{1} = -\ug\og \su{0} + (\alpha +\beta) \su{1}\).
\end{rem}

\begin{rem}\label{B8.1.28.}
 Let \(\ug\in\R\), let \(\og\in(\ug,\infp)\), let \(\kappa\in\NOinf\), and let \(\seqs{\kappa} \in\Fggqu{\kappa}\).
 Because of \rdefn{D1719}, \rnota{N41}, and \rremss{L1319}{BN3.}, then \(\set{\umg{j},\omg{j}}\subseteq\CHq\) for all \(j\in\mn{0}{\kappa}\).
\end{rem}

\bdefnl{D1730}
 Let \(\ug\in\R\), let \(\og\in(\ug,\infp)\), let \(\kappa\in\NOinf\), and let  \(\seqs{\kappa}\in\Fggqu{\kappa}\).
 Then the sequences \((\usc{j})_{j=0}^\kappa\) and \((\osc{j})_{j=1}^\kappa\) given with \(\umg{-1} \defeq  \Oqq\) by
 \begin{align*}
  \usc{j}&\defeq  s_j -\umg{j-1}&
  &\text{and}&
  \osc{j}\defeq  \omg{j-1} - s_j
 \end{align*}
 are called the \notii{\tsequsco{\(\seqs{\kappa}\)}} and the \notii{\tseqosco{\(\seqs{\kappa}\)}}, resp.
\edefn

 Taking into account that \rpropp{F8.1.44.}{F8.1.44.a} will show \(\omg{j-1}-\umg{j-1}\in\Cggq\), the matrices \(\usc{j}\) and \(\osc{j}\) introduced in \rdefn{D1730} determine the position of the matrix \(\su{j}\) in the interval \([\umg{j-1},\omg{j-1}]\).

\begin{rem}\label{R1530}
 Let \(\ug\in\R\), let \(\og\in(\ug,\infp)\), let \(\kappa\in\NOinf\), and let \(\seqs{\kappa} \in\Fggqu{\kappa}\).
 Then \(\usc{0} = \su{0}\).
 If \(\kappa\geq1\), then furthermore \(\usc{1} = \sau{0}\) and \(\osc{1} = \sbu{0}\).
 If \(\kappa\geq2\), then moreover \(\osc{2} = \scu{0}\).
\end{rem}

\begin{rem}\label{M8.1.6.}
 Let \(\ug\in\R\), let \(\og\in(\ug,\infp)\), let \(\kappa\in\NOinf\), and let  \(\seqs{\kappa}\in\Fggqu{\kappa}\). 
 Then \(\usc{2n} = \Lu{n}\) for all \(n\in\NO \) with \(2n \le \kappa\) and \(\usc{2n+1} = \Lau{n}\) for all \(n\in\NO \) with \(2n+1\le \kappa\).
 Furthermore, \(\osc{2n+1} = \Lbu{n}\) for all \(n\in\NO \) with \(2n+1\le \kappa\) and \(\osc{2n+2} = \Lcu{n}\) for all \(n\in\NO \) with \(2n + 2\le \kappa\).
\end{rem}

 From \rrem{M8.1.6.} and \rnotap{N41}{N41.c} we see now that the matrices introduced in \rdefn{D1730} are indeed Schur complements in block Hankel matrices which are responsible for the property of belonging to the set \(\Fggqu{m}\).
For this reason, we had chosen the terminology introduced in \rdefn{D1730}.
 In the sequel, we investigate the interplay between the ranges of two consecutive elements of the sequence of matrices introduced in \rdefn{D1730}.

\bleml{L1433}
 Let \(\ug\in\R\), let \(\og\in(\ug,\infp)\), let \(\kappa\in\NOinf\), and let  \(\seqska\) be a sequence from \(\Cqq\).
 \benui
  \il{L1433.a} If \(\seqska\in\Fggqu{\kappa}\), then \(\usc{j}\in\Cggq\) for all \(j\in\mn{0}{\kappa}\) and \(\osc{j}\in\Cggq\) for all \(j\in\mn{1}{\kappa}\).
  \il{L1433.b} If \(\seqska\in\Fgqu{\kappa}\), then \(\usc{j}\in\Cgq\) for all \(j\in\mn{0}{\kappa}\) and \(\osc{j}\in\Cgq\) for all \(j\in\mn{1}{\kappa}\).
 \eenui
\elem
\bproof
 Use \rrem{M8.1.6.} and \rcor{B8.1.22.}.
\eproof
 
 In the sequel, we need detailed information on the interplay between the ranges of consecutive elements of the sequences of lower and upper Schur complements associated with an \tabHnnd{} sequence.
 The next result gives us a first impression on this theme.
\begin{lem}\label{L8.1.37.}
 Let \(\ug\in\R\), let \(\og\in(\ug,\infp)\), let \(\kappa\in\NOinf\), and let  \(\seqs{\kappa} \in \Fggqu{\kappa}\).
 Then \(\cR (\usc{j}) +\cR (\osc{j})\subseteq \cR (\usc{j-1})\cap \cR (\osc{j-1})\) for each \(j\in\mn{2}{\kappa}\).
\end{lem}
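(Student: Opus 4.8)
The plan is to read off the asserted inclusion, via \rrem{M8.1.6.}, as a short list of range inclusions among the Schur complements $\Lu{n},\Lau{n},\Lbu{n},\Lcu{n}$, and then to obtain each of these from \rprop{KL5.1.30.}, applied either to a section of $\seqs{\kappa}$ or to a section of one of the transformed sequences $(\sau{j})_{j=0}^{\kappa-1}$ and $(\sbu{j})_{j=0}^{\kappa-1}$. Fix $j\in\mn{2}{\kappa}$; since the four matrices in the claim depend only on $\seqs{j}$, by \rprop{M8.1.21.} I may assume $\kappa=j$ is finite. Using \rrem{M8.1.6.}, if $j=2n$ with $n\in\N$ the claim becomes
\[
 \ran{\Lu{n}}+\ran{\Lcu{n-1}}\subseteq\ran{\Lau{n-1}}\cap\ran{\Lbu{n-1}},
\]
while if $j=2n+1$ (which forces $n\in\N$, since $j\ge3$) it becomes
\[
 \ran{\Lau{n}}+\ran{\Lbu{n}}\subseteq\ran{\Lu{n}}\cap\ran{\Lcu{n-1}}.
\]
By \rprop{L8.1.24.} the sequences $(\sau{j})_{j=0}^{\kappa-1}$ and $(\sbu{j})_{j=0}^{\kappa-1}$ are again \tabHnnd{}, so, together with \rprop{M8.1.21.} and \rprop{L8.1.33.}, all the sections of $\seqs{\kappa}$, $(\sau{j})_{j=0}^{\kappa-1}$, $(\sbu{j})_{j=0}^{\kappa-1}$ that occur below belong to the corresponding $\Kggequ{}\cap\Lggequ{}$.

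For the part involving only $\Lu{}$, $\Lau{}$, $\Lbu{}$ I would argue as follows. The section $\seqs{2n}$ lies in $\Kggequ{2n}\cap\Lggequ{2n}$, so \rpartss{KL5.1.30.c}{KL5.1.30.d} of \rprop{KL5.1.30.} yield $\ran{\Lu{n}}\subseteq\ran{\Lau{n-1}}$ and $\ran{\Lu{n}}\subseteq\ran{\Lbu{n-1}}$. The section $\seqs{2n+1}$ lies in $\Kggequ{2n+1}\cap\Lggequ{2n+1}$, so \rpartss{KL5.1.30.a}{KL5.1.30.b} of \rprop{KL5.1.30.} (with its running index taken to be $n+1$) yield $\ran{\Lau{n}}\subseteq\ran{\Lu{n}}$ and $\ran{\Lbu{n}}\subseteq\ran{\Lu{n}}$.

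For the part involving $\Lcu{}$ I would pass to the transformed sequences. The crucial identification comes from \rrem{R5.2.}: since $\scu{j}=-\ug\sbu{j}+\sbu{j+1}$, the sequence $(\scu{j})$ is obtained from $(\sbu{j})$ by the very construction that produces $(\sau{j})$ from $(s_j)$; and since $\scu{j}=\og\sau{j}-\sau{j+1}$, it is obtained from $(\sau{j})$ by the construction that produces $(\sbu{j})$ from $(s_j)$. Hence the matrices $\Lcu{\cdot}$ are precisely the Schur complements playing the role of $\Lau{\cdot}$ for the sequence $(\sbu{j})$ and the role of $\Lbu{\cdot}$ for the sequence $(\sau{j})$, while $\Lbu{\cdot}$ (resp.\ $\Lau{\cdot}$) plays the role of $\Lu{\cdot}$ for $(\sbu{j})$ (resp.\ $(\sau{j})$). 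Thus \rpropp{KL5.1.30.}{KL5.1.30.b} applied to $(\sau{j})_{j=0}^{2n-1}$ gives $\ran{\Lcu{n-1}}\subseteq\ran{\Lau{n-1}}$, \rpropp{KL5.1.30.}{KL5.1.30.a} applied to $(\sbu{j})_{j=0}^{2n-1}$ gives $\ran{\Lcu{n-1}}\subseteq\ran{\Lbu{n-1}}$, \rpropp{KL5.1.30.}{KL5.1.30.d} applied to $(\sau{j})_{j=0}^{2n}$ gives $\ran{\Lau{n}}\subseteq\ran{\Lcu{n-1}}$, and \rpropp{KL5.1.30.}{KL5.1.30.c} applied to $(\sbu{j})_{j=0}^{2n}$ gives $\ran{\Lbu{n}}\subseteq\ran{\Lcu{n-1}}$. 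Adding the first two of these to the two inclusions of the previous paragraph for $\seqs{2n}$ settles the even case, and adding the last two to the two inclusions for $\seqs{2n+1}$ settles the odd case.

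I expect the only delicate point to be the bookkeeping in the last step: one must keep careful track of which endpoint-transform of which of the three sequences $\seqs{}$, $(\sau{j})$, $(\sbu{j})$ produces $(\scu{j})$, so that the generic Schur complements appearing in \rprop{KL5.1.30.} are matched correctly with $\Lu{n},\Lau{n},\Lbu{n},\Lcu{n}$ of the original sequence. Verifying that all index constraints of \rprop{KL5.1.30.} hold (namely $n\in\N$, which follows from $j\ge2$, and that the lengths $2n$, $2n-1$, $2n+1$ occurring above are $\le\kappa$, which follows from $j\le\kappa$) is routine, and no analytic ingredient is required — the whole argument is linear-algebraic and rests on results established earlier in the paper.
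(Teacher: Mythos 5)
Your proof is correct and follows essentially the same route as the paper: reduce via \rrem{M8.1.6.} to range inclusions among $\Lu{n}$, $\Lau{n}$, $\Lbu{n}$, $\Lcu{n}$, obtain the memberships in $\Kggequ{}\cap\Lggequ{}$ from \rpropss{M8.1.21.}{L8.1.24.} together with \rprop{L8.1.33.}, and then read off all eight inclusions from \rprop{KL5.1.30.} applied to $\seqs{}$, $(\sau{j})$, $(\sbu{j})$ using the identifications of \rrem{R5.2.}. The bookkeeping you flag as delicate is handled correctly.
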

\begin{proof}
 Suppose \(\kappa \ge 2\).
 We consider an arbitrary \(n\in\N\) with \(2n \le \kappa\).
 In view of \rpropp{M8.1.21.}{M8.1.21.a}, we have \(\seqs{2n} \in \Fggqu{2n}\).
Thus, \rprop{L8.1.33.} yields  \(\seqs{2n} \in \Kggequ{2n}\cap \Lggequ{2n}\).
 Hence, using \rprop{KL5.1.30.},  we get 
\begin{equation}\label{CFMUS}
 \cR (\Lu{n})
 \subseteq \Ran{\Lau{n-1}}\cap \Ran{\Lbu{n-1}}.
\end{equation}
 \rpropp{L8.1.24.}{L8.1.24.a} provide us  \(\set{ (\sau{j})_{j=0}^{2n-1}, (\sbu{j})_{j=0}^{2n-1}} \subseteq  \Fggqu{2n-1}\).
 Hence, from \rprop{L8.1.33.} we conclude \((\sbu{j})_{j=0}^{2n-1} \in \Kggequ{2n-1}\) and \((\sau{j})_{j=0}^{2n-1} \in\Lggequ{2n-1}\).
 Using \rrem{R5.2.} and \rprop{KL5.1.30.}, we infer then
\begin{equation}\label{CFARL}
 \Ran{ \Lcu{n-1}}
 \subseteq \Ran{\Lau{n-1}}\cap \Ran{\Lbu{n-1}}.
\end{equation}
 From \eqref{CFMUS}, \eqref{CFARL}, and \rrem{M8.1.6.} we get \(\cR (\usc{2n}) + \cR (\osc{2n}) \subseteq \cR (\usc{2n-1}) \cap \cR (\osc{2n-1})\).
 
 Now we consider an arbitrary \(n\in\N\) with \(2n+1\le\kappa\).
 Then \rpropp{M8.1.21.}{M8.1.21.a} shows that \(\seqs{2n+1}\) belongs to \(\Fggqu{2n+1}\).
 According to \rprop{L8.1.33.}, this implies \(\seqs{2n+1} \in \Kggequ{2n+1} \cap \Lggequ{2n+1}\).
 Thus, from \rprop{KL5.1.30.} we obtain 
\begin{equation}\label{GM}
 \Ran{\Lau{n}}  \cup \Ran{\Lbu{n}}
 \subseteq \ran{\Lu{n}}.
\end{equation}
 \rpropp{L8.1.24.}{L8.1.24.a} yields \(\set{ (\sau{j})_{j=0}^{2n}, (\sbu{j})_{j=0}^{2n}} \subseteq  \Fggqu{2n}\).
 Consequently, \rprop{L8.1.33.} provides us \((\sau{j})_{j=0}^{2n} \in\Lggequ{2n-1}\) and \((\sbu{j})_{j=0}^{2n} \in \Kggequ{2n}\).
 Taking into account \rrem{R5.2.}, then \rprop{KL5.1.30.} shows that \(\ran{\Lau{n}}\cup\ran{\Lbu{n}}\subseteq\ran{\Lcu{n-1}}\).
 Therefore, in view of \eqref{GM} and \rrem{M8.1.6.}, we get then  the inclusion \(\cR (\usc{2n+1}) + \cR (\osc{2n+1})\subseteq \cR (\usc{2n})\cap\cR (\osc{2n})\).
\end{proof}

 \rcor{C0834} will show, that in the situation of \rlem{L8.1.37.} we even have the equality \(\cR (\usc{j}) +\cR (\osc{j})=\cR (\usc{j-1})\cap \cR (\osc{j-1})\) for all \(j\in\mn{2}{\kappa}\).

\bdefnl{D1516}
 Let \(\ug\in\R\), let \(\og\in(\ug,\infp)\), let \(\kappa\in\NOinf\), and let  \(\seqs{\kappa}\in\Fggqu{\kappa}\).
 Then the sequences \((\cen{j})_{j=0}^\kappa\) and \((\dia{j})_{j=0}^\kappa\) given by\index{c@\(\cen{j}\defeq\frac{1}{2}\rk{\umg{j}+\omg{j}}\)} \index{d@\(\dia{j}\defeq\omg{j}-\umg{j}\)}
 \begin{align*}
  \cen{j}&\defeq\frac{1}{2}\rk{\umg{j}+\omg{j}}&
  &\text{and}&
  \dia{j}&\defeq\omg{j}-\umg{j}
 \end{align*}
 are called the \notii{\tseqceno{\(\seqs{\kappa}\)}} and the \notii{\tseqdiao{\(\seqs{\kappa}\)}}, resp.
\edefn

\begin{rem}\label{M8.1.40.}
 Let \(\ug\in\R\), let \(\og\in(\ug,\infp)\), let \(\kappa\in\NOinf\), and let  \(\seqska\in\Fggqu{\kappa}\).
 If \(m \in \mn{0}{\kappa}\), then it is easily checked that \((\dia{j})_{j=0}^m\) is the \tseqdiao{\(\seqs{m}\)}.
 Obviously, \(\dia{0} = \ba  \su{0}\).
 If \(\kappa\ge 1\), then \(\dia{1} = -\alpha\beta \su{0} + (\alpha+\beta) \su{1} - \su{1} \su{0}^\mpi \su{1}\) and \rdefnss{D1516}{D1730} also show that \(\dia{j} = \usc{j+1} + \osc{j+1}\) for all \(j\in\mn{0}{\kappa -1}\).
\end{rem}

\begin{rem}\label{R1535}
 Let \(\ug\in\R\), let \(\og\in(\ug,\infp)\), let \(\kappa\in\NOinf\), and let \(\seqs{\kappa} \in\Fggqu{\kappa}\).
 Because of \rdefn{M8.1.40.} and \rrem{B8.1.28.}, then \(\set{\cen{j},\dia{j}}\subseteq\CHq\) for all \(j\in\mn{0}{\kappa}\).
\end{rem}

 The next result is of fundamental importance for our further considerations.
 Again we make essential use of the parallel sum of matrices:
\bthml{M8.1.43.}
 Let \(\ug\in\R\), let \(\og\in(\ug,\infp)\), let \(\kappa\in\NOinf\), and let  \(\seqs{\kappa} \in \Fggqu{\kappa}\).
 Then \(\dia{0} = \ba  \usc{0}\).
 If \(\kappa\ge 1\), then \(\dia{k} = \ba  (\usc{k} \ps  \osc{k})\) and \(\dia{k} = (\beta-\alpha) (\osc{k} \ps  \usc{k})\) for all \(k\in\mn{1}{\kappa}\).
\ethm
\begin{proof}
 In view of \rnotap{N41}{N41.c} and \rdefn{D1730}, we have \(\su{0} = \Lu{0} = \usc{0}\) and, consequently, \(\dia{0} = \omg{0} - \umg{0} = \ba  \su{0}\).
 Assume \(\kappa \ge 1\).
 According to \rlemp{L1433}{L1433.a}, we know that \(\set{\usc{k}, \osc{k}}\subseteq \Cggq \) for each \(k\in\mn{1}{\kappa}\).
 Thus, \rlemss{MA.8.7*.}{MA.8.6.} provide us \(\usc{k} \ps  \osc{k} =  \osc{k} \ps  \usc{k}\) for each \(k\in\mn{1}{\kappa}\).
 From \rrem{M8.1.40.} we see that
\begin{equation}\label{MTT}
 \ba  (\usc{k} \ps  \osc{k})
 = \ba  \usc{k} (\usc{k} +\osc{k})^\mpi  \osc{k}
 = \usc{k}\rk*{ \frac{1}{\beta -\alpha} \dia{k-1}}^\mpi  \osc{k}
\end{equation}
 is valid for each \(k\in\mn{1}{\kappa}\).
 Using \rpropp{M8.1.21.}{M8.1.21.a}  and \rrem{R1606}, we conclude that \(\seqs{1}\) belongs to \(\Dqqu{1}\).
 Hence, \rremss{MA.7.8.}{A.7.8-1.} yield \(\su{0} \su{0}^\mpi  \su{1} = \su{1}\) and \(\su{1} \su{0}^\mpi  \su{0} = \su{1}\).
 Taking into account \eqref{MTT} and \rremss{M8.1.6.}{M8.1.40.}, we obtain then
\begin{multline*}
 \ba  (\usc{1} \ps  \osc{1})
 =\usc{1} \rk*{\frac{1}{\beta -\alpha} \dia{0}}^\mpi  \osc{1}
 =\ek*{\Lau{0}}\su{0}^\mpi\ek*{\Lbu{0}}
 =\ek*{\sau{0}}\su{0}^\mpi\ek*{\sbu{0}}\\
 = (-\alpha \su{0} + \su{1}) \su{0}^\mpi   (\beta \su{0}-\su{1})
 = -\alpha\beta \su{0}\su{0}^\mpi \su{0} +\alpha \su{0}\su{0}^\mpi \su{1} + \beta \su{1}\su{0}^\mpi \su{0} - \su{1}\su{0}^\mpi\su{1}
 = \dia{1}.
\end{multline*}
 In the cases \(\kappa = 0\) and \(\kappa =1\), the proof is complete.
 Now suppose \(\kappa\ge 2\).
 We assume that \(m\in\mn{2}{\kappa}\) and that 
\begin{align}\label{IFUH}
\dia{j}&= \ba  (\usc{j} \ps  \osc{j})&\text{for each }j&\in\mn{0}{m-1}.
\end{align}
 Since the matrices \(\usc{m-1}\) and \(\osc{m-1}\) are both \tnnH{}, from \rlem{MA.8.7*.} and \rprop{MA.8.12.} we get 
\begin{equation}\label{IWR}
(\usc{m-1} \ps  \osc{m-1})^\mpi
= \OPu{\cR_{m-1}} (\usc{m-1}^\mpi  + \osc{m-1}^\mpi ) \OPu{\cR_{m-1}}
\end{equation}
 where \(\cR_{m-1} \defeq \cR (\usc{m-1})\cap \cR (\osc{m-1})\).
 By virtue of \rlem{L8.1.37.}, we infer \(\cR (\usc{m}) + \cR (\osc{m})\subseteq \cR_{m-1}\) and, consequently, \(\OPu{\cR_{m-1}} \usc{m} = \usc{m}\) and \(\OPu{\cR_{m-1}}  \osc{m} = \osc{m}\).
 Since the matrices \(\OPu{\cR_{m-1}}\) and \(\usc{m}\) are \tH{}, then \(\usc{m} \OPu{\cR_{m-1}} = \usc{m} \) also holds true.
 Using \eqref{MTT}, \eqref{IFUH}, and \eqref{IWR}, it follows
\beql{M8117N}\begin{split}
 \ba  (\usc{m} \ps  \osc{m})
 &= \usc{m} \rk*{\frac{1}{\beta -\alpha} \dia{m-1}}^\mpi  \osc{m}
 =  \usc{m} (\usc{m-1} \ps   \osc{m-1})^\mpi  \osc{m}\\
 &= \usc{m} \OPu{\cR_{m-1}}  (\usc{m-1}^\mpi  + \osc{m-1}^\mpi )  \OPu{\cR_{m-1}}  \osc{m}\\
 &= \usc{m} (\usc{m-1}^\mpi  + \osc{m-1}^\mpi ) \osc{m}
 = \usc{m} \usc{m-1}^\mpi  \osc{m} + \usc{m} \osc{m-1}^\mpi  \osc{m}.
\end{split}\eeq
 First we consider now the case that \(m=2n\) with some positive integer \(n\).
 Then from \eqref{M8117N} and \rrem{M8.1.6.} we obtain
\beql{M8118N}
 \ba  (\usc{2n} \ps  \osc{2n}) 
 = \Lu{n} \ek*{\Lau{n-1}}^\mpi  \Lcu{n-1} + \Lu{n} \ek*{\Lbu{n-1}}^\mpi  \Lcu{n-1}.
\eeq
 According to \rpropp{M8.1.21.}{M8.1.21.a}, the sequence \(\seqs{2n}\) belongs to \(\Fggqu{2n}\).
 Thus, \rprop{L8.1.33.} yields \(\seqs{2n} \in \Kggequ{2n} \cap \Lggequ{2n}\).
 Hence, \rlem{FR435.} shows that  \eqref{FT1} and \eqref{FT2} are true.
 This implies
\begin{multline}\label{NAU}
 \dia{2n}
 = \omg{2n} - \umg{2n}
 = \beta\su{2n} -\Tripb{n} -\ek*{\alpha\su{2n}+\Tripa{n}}\\
 = \beta \Lu{n} + \Lu{n} \ek*{\Lbu{n-1}}^\mpi\ek*{\sbu{2n-1} - \Mbu{n-1}}\\
 - \alpha \Lu{n} - \Lu{n} \ek*{\Lau{n-1}}^\mpi\ek*{\sau{2n-1} - \Mau{n-1}}.
\end{multline}
 Since  \(\seqs{2n}\) belongs to \(\Kggequ{2n}\cap \Lggequ{2n}\), we get from \rprop{L5.1.30.} that \(\nul{\Lau{n-1}} \cup \nul{\Lbu{n-1}}\subseteq \cN (\Lu{n})\).
 Consequently, \rrem{A.7.8-1.} shows that
\begin{align}\label{N8120N}
\Lu{n} \ek*{\Lau{n-1}}^\mpi   \Lau{n-1}&= \Lu{n}&
&\text{and}&
\Lu{n}  \ek*{\Lbu{n-1}}^\mpi  \Lbu{n-1}&= \Lu{n} .
\end{align}
 \rrem{R5.2.} and \rnota{N41} yield
\begin{align}
 \Lcu{0}
 &= \scu{0}
 = \beta \sau{0} - \sau{1}
 =\beta \Lau{0} - \sau{1} +  \Mau{0}\label{N8121N}
\intertext{and}
 \Lcu{0}
 &= \scu{0}
 = -\alpha \sbu{0} + \sbu{1} 
 = -\alpha \Lbu{0} + \sbu{1}-\Mbu{0}.\label{N8131N}
\end{align}
 Using \eqref{N8120N} for \(n=1\), from \eqref{N8121N} and \eqref{N8131N} we obtain then \(\Lu{1} \Lau{0}^\mpi  \Lcu{0} = \Lu{1} \Lau{0}^\mpi  \ek{\Mau{0} - \sau{1}} + \beta \Lu{1}\) and \(\Lu{1} \Lbu{0}^\mpi  \Lcu{0} = \Lu{1} \Lbu{0}^\mpi  [\sbu{1} -  \Mbu{0}] - \alpha \Lu{1}\).
 This implies
\begin{multline}\label{TL1}
\Lu{1} \Lau{0}^\mpi  \Lcu{0} + \Lu{1} \Lbu{0}^\mpi  \Lcu{0} \\
=  \Lu{1} \Lau{0}^\mpi  \ek*{\Mau{0} - \sau{1}} +\beta \Lu{1} + \Lu{1}  \Lbu{0}^\mpi  \ek*{\sbu{1} - \Mbu{0}} -\alpha \Lu{1}.
\end{multline}
 Comparing  \eqref{M8118N} and \eqref{NAU} for \(n=1\) with \eqref{TL1}, we conclude \(\dia{2} = (\beta-\alpha) (\usc{2} \ps  \osc{2})\).
 
 Now we suppose that \(n\ge 2\).
 From \rpropp{M8.1.21.}{M8.1.21.a} we know that \(\seqs{2n-1}\) belongs to \(\Fggqu{2n-1}\).
 Hence, \rpropp{L8.1.24.}{L8.1.24.a} implies \(\set{ (\sau{j})_{j=0}^{2n-2}, (\sbu{j})_{j=0}^{2n-2}} \subseteq \Fggqu{2n-2}\).
 Consequently,  because of \rprop{L8.1.33.}, we have \((\sau{j})_{j=0}^{2n-2}\in\Lggequ{2n-2}\) and \((\sbu{j})_{j=0}^{2n-2} \in \Kggequ{2n-2}\).
 In view \rrem{R5.2.}, the application of \rlemp{FR435.}{FR435.b} to the sequences \((\sau{j})_{j=0}^{2n-2}\) and of \rlemp{FR435.}{FR435.a} to the sequences  \((\sbu{j})_{j=0}^{2n-2}\) provides us 
\[\begin{split}
&((_\alpha \Theta)_\beta)_{n-1} \\
&=\beta \sau{2n-2}  - \ek*{   \Mau{n-1} + \Lau{n-1}\rk*{\beta \Iq  + ((_\alpha L)_\beta)_{n-2}^\mpi  \ek*{ ((_\alpha s)_\beta)_{2n-3} - ((_\alpha M)_\beta)_{n-2} }  }}\\
&=\beta \sau{2n-2} - \ek*{\Mau{n-1} + \Lau{n-1}\rk*{\beta \Iq  + \Lcu{n-2}^\mpi\ek*{\scu{2n-3} - \Mcu{n-2}}}}
\end{split}\]
 and, analogously,
\begin{multline*}
 \rk*{\fourIdx{}{\ug}{}{}{\rk{\Theta_\og}}}_{n-1}
 = - \alpha \sbu{2n-2} + \Mbu{n-1}\\
 +  \Lbu{n-1} \rk*{\alpha \Iq  + \Lcu{n-2}^\mpi\ek*{ \scu{2n-3} - \Mcu{n-2}}}.
\end{multline*}
 By virtue of \rrem{R5.2.}, we get 
\[\begin{split}
 \Lcu{n-1} 
 &= ((_\alpha L)_\beta)_{n-1} 
 =\beta \sau{2n-2} - \sau{2n-1} -  ((_\alpha \Theta)_\beta)_{n-1}\\
 &= \Mau{n-1} - \sau{2n-1} + \Lau{n-1}\rk*{\beta \Iq  + \Lcu{n-2}^\mpi\ek*{\scu{2n-3} - \Mcu{n-2}}}
\end{split}\]
and, analogously,
\[
 \Lcu{n-1} 
 = \sbu{2n-1} - \Mbu{n-1} -  \Lbu{n-1}\rk*{\alpha \Iq  + \Lcu{n-2}^\mpi\ek*{\scu{2n-3} - \Mcu{n-2}}}.
\]
 Using \eqref{N8120N}, we conclude then
\begin{multline*}
 \Lu{n} \ek*{\Lau{n-1}}^\mpi  \Lcu{n-1}
 = \Lu{n} \ek*{\Lau{n-1}}^\mpi\ek*{\Mau{n-1} -  \sau{2n-1}}\\
 + \Lu{n}\rk*{\beta \Iq  +  \Lcu{n-2}^\mpi\ek*{\scu{2n-3} - \Mcu{n-2}}}
\end{multline*}
and
\begin{multline*}
 \Lu{n} \ek*{\Lbu{n-1}}^\mpi  \Lcu{n-1}
 = \Lu{n} \ek*{\Lbu{n-1}}^\mpi\ek*{\sbu{2n-1} -  \Mbu{n-1}}\\
 - \Lu{n}\rk*{\alpha \Iq  +  \Lcu{n-2}^\mpi\ek*{\scu{2n-3} - \Mcu{n-2}}}.
\end{multline*}
 This implies
\begin{multline*}
\Lu{n} \ek*{\Lau{n-1}}^\mpi  \Lcu{n-1} + \Lu{n} \ek*{\Lbu{n-1}}^\mpi  \Lcu{n-1}\\
= \Lu{n} \ek*{\Lau{n-1}}^\mpi\ek*{\Mau{n-1} -  \sau{2n-1}}+\beta \Lu{n}\\
+ \Lu{n} \ek*{\Lbu{n-1}}^\mpi\ek*{\sbu{2n-1} - \Mbu{n-1}}-\alpha \Lu{n}.
\end{multline*}
 Comparing the foregoing equation with \eqref{M8118N} and \eqref{NAU}, we see that the equation \(\dia{2n} = \ba  (\usc{2n}\ps  \osc{2n})\) holds true.
 
 It remains to consider the case that \(m=2n+1\) with some \(n\in\N\).
 Because of \rrem{M8.1.6.} and \eqref{M8117N}, we have
\begin{equation}\label{IUA}
 \ba  (\usc{2n+1} \ps  \osc{2n+1})
 =  \Lau{n} \Lu{n}^\mpi  \Lbu{n} + \Lau{n} \Lcu{n-1}^\mpi  \Lbu{n}.
\end{equation}
 From \rpropp{M8.1.21.}{M8.1.21.a} we know that \(\seqs{2n+1}\) belongs to \(\Fggqu{2n+1}\).
 \rpropp{L8.1.24.}{L8.1.24.a} shows that   \((\sau{j})_{j=0}^{2n} \in \Fggqu{2n}\).
 Consequently, \rprop{L8.1.33.} yields \((\sau{j})_{j=0}^{2n} \in \Lggequ{2n}\).
 Using \rrem{R5.2.} and applying \rlemp{FR435.}{FR435.b} to the sequence  \((\sau{j})_{j=0}^{2n}\), we get
\begin{equation}\label{TP1}
 \begin{split}
 &\beta \sau{2n}  - \Tripc{n} 
 = \beta \sau{2n} - ((_\alpha \Theta)_\beta)_n\\
 &= \Mau{n} + \Lau{n}\rk*{\beta \Iq  + ((_\alpha L)_\beta)_{n-1}^\mpi    \ek*{((_\alpha s)_\beta)_{2n-1} - ((_\alpha M)_\beta)_{n-1}}}\\
 &= \Mau{n} + \Lau{n}\rk*{\beta \Iq  + \Lcu{n-1}^\mpi   \ek*{\scu{2n-1} -\Mcu{n-1}}}.
 \end{split}
\end{equation}
 Since \(\seqs{2n+1}\) belongs to \(\Fggqu{2n+1}\), we see from \rprop{L8.1.33.} that \(\seqs{2n+1} \in\Kggequ{2n+1}\) is valid.
 Thus, \rlemp{FR433.}{FR433.a} provides us
\begin{equation}\label{TP2}
 \alpha\su{2n+1} - \Trip{n+1} 
 = - \Mau{n}  - \Lau{n} \Lu{n}^\mpi  (\su{2n+1} - \Mu{n}).
\end{equation}
 In view of \rdefnss{D1516}{D1719}, \eqref{TP1}, and \eqref{TP2}, we conclude
\begin{equation}\label{TP3}\begin{split}
&\dia{2n+1} 
= \omg{2n+1} - \umg{2n+1} 
= -\alpha\beta\su{2n} +  (\alpha +\beta)\su{2n+1} - \Tripc{n} -\Trip{n+1}\\
&= \beta \sau{2n} -  \Tripc{n}  +  \alpha\su{2n+1} -\Trip{n+1}\\
&= \Lau{n}\rk*{\beta \Iq  + \Lcu{n-1}^\mpi  \ek*{\scu{2n-1} - \Mcu{n-1}}} - \Lau{n} \Lu{n}^\mpi  (\su{2n+1} - \Mu{n}).
\end{split}\end{equation}
 According to \rpropp{M8.1.21.}{M8.1.21.a}, the sequence \(\seqs{2n}\) belongs to \(\Fggqu{2n}\).
 Consequently, \rprop{L8.1.33.} shows that \(\seqs{2n} \in\Lggequ{2n}\).
 Therefore, because of \rlemp{FR435.}{FR435.b}, equation \eqref{FT2} holds true.
 By virtue of \rnotap{N41}{N41.c} and \eqref{FT2}, we have then 
\begin{multline}\label{LST}
 \Lbu{n}
 = \sbu{2n} -\Tripb{n}\\
 = -\su{2n+1} + \Mu{n} +\Lu{n} \rk*{ \beta \Iq  +  \ek*{\Lbu{n-1}}^\mpi\ek*{\sbu{2n-1} - \Mbu{n-1}}}.
\end{multline}
 Since \(\seqs{2n+1}\) belongs to \(\Kggequ{2n+1}\), we see from \rpropp{L5.1.30.}{L5.1.30.a} that \(\cN (\Lu{n}) \subseteq \nul{\Lau{n}}\).
 Hence, \rrem{A.7.8-1.} shows that \(\Lau{n} \Lu{n}^\mpi  \Lu{n} = \Lau{n}\).
 Thus, because of \eqref{LST}, we obtain
\begin{multline}\label{LST8.1.23}
 \Lau{n}  \Lu{n}^\mpi  \Lbu{n}\\
 = - \Lau{n} \Lu{n}^\mpi  (\su{2n+1} - \Mu{n}) + \Lau{n} \rk*{ \beta \Iq  +  \ek*{\Lbu{n-1}}^\mpi\ek*{\sbu{2n-1} - \Mbu{n-1}} }.
\end{multline}
 Taking \(\seqs{2n}\in\Fggqu{2n}\)  and \rpropp{L8.1.24.}{L8.1.24.a} into account, we get \((\sbu{j})_{j=0}^{2n-1}\in\Fggqu{2n-1}\).
 By virtue of \rprop{L8.1.33.}, this implies \((\sbu{j})_{j=0}^{2n-1}\in\Kggequ{2n-1}\).
 Applying \rlemp{FR433.}{FR433.a} to the sequence \((\sbu{j})_{j=0}^{2n-1}\) and using \rrem{R5.2.}, we infer
\[\begin{split}
 \Tripb{n} 
 &= \alpha \sbu{2n-1} + (_\alpha (M_\beta))_{n-1} + (_\alpha (L_\beta))_{n-1} \ek*{\Lbu{n-1}}^\mpi\ek*{\sbu{2n-1} - \Mbu{n-1}}\\
 &= \alpha \sbu{2n-1} + \Mcu{n-1} + \Lcu{n-1} \ek*{\Lbu{n-1}}^\mpi\ek*{\sbu{2n-1} - \Mbu{n-1}}.
\end{split}\]
 In view of \rnotap{N41}{N41.c} and \rrem{R5.2.}, then
\begin{equation}\label{CMG}\begin{split}
 \Lbu{n} 
 &= \sbu{2n} -  \Tripb{n}\\
 &= \scu{2n-1} - \Mcu{n-1} -  \Lcu{n-1} \ek*{\Lbu{n-1}}^\mpi  \ek*{\sbu{2n-1} - \Mbu{n-1}}
\end{split}\end{equation}
 follows.
 Since  \((\sau{j})_{j=0}^{2n}\in\Lggequ{2n}\) is true, \rpropp{L5.1.30.}{L5.1.30.d} yields \(\nul{((_\alpha L)_\beta)_{n-1}} \subseteq \nul{\Lau{n}}\).
 Since \rrem{R5.2.} shows that \(((_\alpha L)_\beta)_{n-1} = \Lcu{n-1}\), then \rrem{A.7.8-1.} yields \(\Lau{n} \Lcu{n-1}^\mpi  \Lcu{n-1} = \Lau{n}\).
 Consequently, from \eqref{CMG} we obtain 
\begin{multline}\label{FCAG1}
 \Lau{n}  \Lcu{n-1}^\mpi    \Lbu{n}\\
 = \Lau{n}  \Lcu{n-1}^\mpi   \ek*{\scu{2n-1} - \Mcu{n-1}} -  \Lau{n} \ek*{\Lbu{n-1}}^\mpi    \ek*{\sbu{2n-1} - \Mbu{n-1}} .
\end{multline}
 Combining \eqref{LST8.1.23} and \eqref{FCAG1}, we conclude
\begin{multline}\label{GL5}
 \Lau{n} \Lu{n}^\mpi  \Lbu{n} + \Lau{n} \Lcu{n-1}^\mpi   \Lbu{n} \\
 = - \Lau{n} \Lu{n}^\mpi  (\su{2n+1} -\Mu{n}) + \beta  \Lau{n} +  \Lau{n} \Lcu{n-1}^\mpi  \ek*{\scu{2n-1} - \Mcu{n-1}}.
\end{multline}
 Comparing \eqref{GL5} with \eqref{TP3} and \eqref{IUA}, we get \(\dia{2n+1} = \ba  (\usc{2n+1}\ps  \osc{2n+1})\). 
\end{proof}

 The following result contains an important property of the interval lengths introduced in \rdefn{D1516}:
\bpropl{F8.1.44.}
 Let \(\ug\in\R\), let \(\og\in(\ug,\infp)\), let \(\kappa\in\NOinf\), and let \(\seqska\) be a sequence from \(\Cqq\).
 Then:
 \benui
  \il{F8.1.44.a} If \(\seqska\in\Fggqu{\kappa}\), then \(\dia{j} \in\Cggq \) for each \(j\in\mn{0}{\kappa}\).
  \il{F8.1.44.b} If \(\seqska\in\Fgqu{\kappa}\), then \(\dia{j} \in\Cgq \) for each \(j\in\mn{0}{\kappa}\).
 \eenui
\eprop
\begin{proof}
 Use \rthm{M8.1.43.}, \rlemss{L1433}{MA.8.7*.}, and \rrem{L0751}.
\end{proof}

 Observe that \rprop{F8.1.44.} is the reason that \((\dia{j})_{j=0}^\kappa\) is said to be the \tseqdiao{\(\seqska \)}.

\bpropl{M8.1.37-1.}
 Let \(\ug\in\R\), let \(\og\in(\ug,\infp)\), let  \(m\in\NO \), and let \( \seqs{m+1}\) be a sequence from \(\Cqq\).
 Then:
 \benui
  \il{M8.1.37-1.a} If \(\seqs{m+1}\in\Fggqu{m+1}\), then \(\su{m+1}\in[\umg{m},\omg{m}]\).
  \il{M8.1.37-1.b} If \(\seqs{m+1}\in\Fgqu{m+1}\), then \(\su{m+1}\in(\umg{m},\omg{m})\).
 \eenui
\eprop
\begin{proof}
 Suppose \(\seqs{m}\in\Fggqu{m}\).
 According to  \rrem{B8.1.28.}, we have  \(\umg{m}^\ad  = \umg{m}\) and \(\omg{m}^\ad  = \omg{m}\).
 \rprop{S8.1.23.} provides us \(\seqs{m} \in\Hggequ{m}\).
 From \(\seqs{m+1} \in\Fggqu{m+1}\) and \rlem{L1319} we know that \(\su{m+1}^\ad  =\su{m+1}\).
 Since \(\seqs{m+1}\in\Fggqu{m+1}\) and \rlemp{L1433}{L1433.a} show that  \(\set{\usc{m+1}, \osc{m+1}} \subseteq  \Cggq \) holds true, in view of \rdefn{D1730}, then~\eqref{M8.1.37-1.a} follows.
 If \(\seqs{m}\) even belongs to \(\Fgqu{m}\), \rlemp{L1433}{L1433.b} shows that \(\usc{m+1}\) and \(\osc{m+1}\) are \tpH{}.
 In view of \rdefn{D1730}, then~\eqref{M8.1.37-1.b} follows.
\end{proof}

\bcorl{C1026}
 Let \(\ug\in\R\), let \(\og\in(\ug,\infp)\), let \(\kappa\in\NOinf\), and let  \(\seqska\) be a sequence from \(\Cqq\).
 Then:
 \benui
  \il{C1026.a} If \(\seqska\in\Fggqu{\kappa}\),then,
 \beql{C1026.1}
  \ug\su{2k}
  \lleq\umg{2k}
  \lleq\cen{2k}
  \lleq\omg{2k}
  \lleq\og\su{2k}
 \eeq
 for all \(k\in\N\) with \(2k\leq\kappa\), and
 \beql{C1026.2}
  \Oqq
  \lleq\umg{2k+1}
  \lleq\cen{2k+1}
  \lleq\omg{2k+1}
  \lleq-\ug\og\su{2k}+\rk{\ug+\og}\su{2k+1}
 \eeq
 for all \(k\in\N\) with \(2k+1\leq\kappa\).
  \il{C1026.b} If \(\seqska\in\Fgqu{\kappa}\),then,
 \beql{C1026.3}
  \ug\su{2k}
  <\umg{2k}
  <\cen{2k}
  <\omg{2k}
  <\og\su{2k}
 \eeq
 for all \(k\in\minf{2}\) with \(2k\leq\kappa\), and
 \beql{C1026.4}
  \Oqq
  <\umg{2k+1}
  <\cen{2k+1}
  <\omg{2k+1}
  <-\ug\og\su{2k}+\rk{\ug+\og}\su{2k+1}
 \eeq
 for all \(k\in\minf{2}\) with \(2k+1\leq\kappa\).
 \eenui
\ecor
\bproof
 Suppose \(\seqs{\kappa}\in\Fggqu{\kappa}\) (resp.\ \(\seqs{\kappa}\in\Fgqu{\kappa}\)).
 In view of \rlem{L1319} and \rremss{B8.1.28.}{R1535}, all the matrices involved are \tH{}.
 We consider a number \(k\in\N\) (resp.\ \(k\in\minf{2}\)).
 
 First assume that \(2k\leq\kappa\).
 Because of \rprop{F8.1.44.}, then \(\dia{2k}\) is \tnnH{} (resp.\ \tpH{}).
 Since \(\cen{2k}-\umg{2k}=\frac{1}{2}\dia{2k}\) and \(\omg{2k}-\cen{2k}=\frac{1}{2}\dia{2k}\), by virtue of \rdefn{D1516}, the two inner inequalities in \eqref{C1026.1} (resp.\ \eqref{C1026.3}) follow.
 According to \rprop{M8.1.21.}, we have \(\seqs{2k-1}\in\Fggqu{2k-1}\) (resp.\ \(\seqs{2k-1}\in\Fgqu{2k-1}\)).
 In view of \rdefnp{D1159}{D1159.a}, hence \(\set{\seqsa{2k-2},\seqsb{2k-2}}\subseteq\Hggqu{2k-2}\) (resp.\ \(\set{\seqsa{2k-2},\seqsb{2k-2}}\subseteq\Hgqu{2k-2}\)).
 Since the matrices \(\sau{2k-1}\) and \(\sbu{2k-1}\) are, according to \rlem{L1319} and \rnota{N5.1}, both \tH{}, we see from \rlem{L1548} that \(\Tripa{k}\) and \(\Tripb{k}\) are both \tnnH{} (resp.\ \tpH{}).
 Consequently, the two outer inequalities in \eqref{C1026.1} (resp.\ \eqref{C1026.3}) can be seen from \rdefn{D1719}.
 
 Now assume that \(2k+1\leq\kappa\).
 Because of \rprop{F8.1.44.}, then \(\dia{2k+1}\) is \tnnH{} (resp.\ \tpH{}).
 Since \(\cen{2k+1}-\umg{2k+1}=\frac{1}{2}\dia{2k+1}\) and \(\omg{2k+1}-\cen{2k+1}=\frac{1}{2}\dia{2k+1}\) by virtue of \rdefn{D1516}, the two inner inequalities in \eqref{C1026.2} (resp.\ \eqref{C1026.4}) follow.
 According to \rprop{M8.1.21.}, we have \(\seqs{2k}\in\Fggqu{2k}\) (resp.\ \(\seqs{2k}\in\Fgqu{2k}\)).
 In view of \rdefnp{D1159}{D1159.b}, hence \(\seqs{2k}\in\Hggqu{2k}\) and \(\seqsc{2k-2}\in\Hggqu{2k-2}\) (resp.\ \(\seqs{2k}\in\Hgqu{2k}\) and \(\seqsc{2k-2}\in\Hgqu{2k-2}\)).
 Since the matrices \(\su{2k+1}\) and \(\scu{2k-1}\) are, according to \rlem{L1319} and \rnota{N5.1}, both \tH{}, we see from \rlem{L1548} that \(\Trip{k+1}\) and \(\Tripc{k}\) are both \tnnH{} (resp.\ \tpH{}).
 Consequently, the two outer inequalities in \eqref{C1026.2} (resp.\ \eqref{C1026.4}) can be seen from \rdefn{D1719}.
\eproof

\bpropl{S8.1.48.}
 Let \(\ug\in\R\), let \(\og\in(\ug,\infp)\), let \(\kappa\in\NOinf\), and let  \(\seqs{\kappa}\in\Fggqu{\kappa}\).
 Then \(\cR (\dia{0}) = \cR (\usc{0})\) and \(\cN (\dia{0}) = \cN (\usc{0})\).
 If \(\kappa \ge 1\), then
\begin{align}\label{TSD}
 \cR (\dia{j})&= \cR (\usc{j}) \cap \cR (\osc{j})&
 &\text{and}&
 \cN (\dia{j})&= \cN (\usc{j}) + \cN (\osc{j})
\end{align}
 hold true for all \(j\in\mn{1}{\kappa}\) and
\begin{align}\label{TST}
 \cR (\dia{j})&= \cR (\usc{j+1}) + \cR (\osc{j+1})&
 &\text{and}&
 \cN (\dia{j})&= \cN (\usc{j+1}) \cap \cN (\osc{j+1})
\end{align}
 are fulfilled for all \(j\in\mn{0}{\kappa -1}\).
\eprop
\begin{proof}
 Because of \(\alpha < \beta\) and \rthm{M8.1.43.}, we have \(\cR (\dia{0}) = \cR (\usc{0})\) and \(\cN (\dia{0}) = \cN (\usc{0})\).
 Assume now \(\kappa \ge 1\).
 Then \rlemp{L1433}{L1433.a} provides us \(\set{\usc{j}, \osc{j}}\subseteq \Cggq \) for all \(j\in\mn{1}{\kappa}\).
 Using \rthm{M8.1.43.}, \rlemss{MA.8.7*.}{MA.8.11.}, and \(\alpha <\beta\), we get \eqref{TSD} for all \(j\in\mn{1}{\kappa}\).
 By virtue of \rremss{M8.1.40.}{A.20.}, we also obtain \eqref{TST} for each \(j\in\mn{0}{\kappa -1}\).
\end{proof}

 The following result improves \rlem{L8.1.37.}.
\bcorl{C0834}
 Let \(\ug\in\R\), let \(\og\in(\ug,\infp)\), let \(\kappa\in\Ninf\), and let  \(\seqs{\kappa}\in\Fggqu{\kappa}\).
 Then \(\cR (\usc{j}) +\cR (\osc{j})=\cR (\usc{j-1})\cap \cR (\osc{j-1})\) and \(\nul{\usc{j-1}}+\nul{\osc{j-1}}=\nul{\usc{j}}\cap\nul{\osc{j}}\) for all \(j\in\mn{2}{\kappa}\)
\ecor
\bproof
 This is a direct consequence of \rprop{S8.1.48.}.
\eproof

\bcorl{M8.1.49.}
 Let \(\ug\in\R\), let \(\og\in(\ug,\infp)\), let \(\kappa\in\Ninf\), and let \(\seqs{\kappa}\in\Fggqu{\kappa}\).
 Then \( \cR (\dia{j}) \subseteq \cR (\dia{j-1})\), \(\cN (\dia{j-1}) \subseteq \cN (\dia{j})\), \(\cR (\usc{j})\subseteq  \cR (\usc{j-1})\), and \(\cN (\usc{j-1})\subseteq \cN (\usc{j})\) for all \(j\in\mn{1}{\kappa}\).
 Furthermore, if \(\kappa \ge 2\), then \(\cR (\osc{j})\subseteq \cR (\osc{j-1})\) and \(\cN (\osc{j-1})\subseteq \cN (\osc{j})\) for each \(j\in\mn{2}{\kappa}\).
\ecor
\begin{proof}
 \rprop{S8.1.48.} provides us \(\cR (\dia{j}) = \cR (\usc{j}) \cap \cR (\osc{j}) \subseteq \cR (\usc{j}) + \cR (\osc{j}) = \cR (\dia{j-1}) \) for all \(j\in\mn{1}{\kappa}\) and \(\cR (\usc{j})\subseteq \cR (\usc{j}) + \cR (\osc{j}) = \cR  (\dia{j-1}) = \cR (\usc{j-1})\cap \cR (\osc{j-1})\subseteq \cR (\usc{j-1})\) as well as \( \cR (\osc{j}) \subseteq \cR (\dia{j-1}) \subseteq \cR (\osc{j-1})\) for all \(j\in\mn{2}{\kappa}\).
 \rprop{S8.1.48.}, \rrem{M8.1.40.}, and  \rdefn{D1730} also yield \(\cR (\usc{1})\subseteq \cR (\usc{1}) + \cR (\osc{1}) = \cR  (\dia{0}) = \cR (\su{0}) =\cR (\usc{0})\).
 The asserted inclusions for the null spaces follow analogously.
\end{proof}

\bcorl{M8.1.55}
 Let \(\ug\in\R\), let \(\og\in(\ug,\infp)\), let \(\kappa\in\Ninf\), and let \(\seqska \in\Fggqu{\kappa}\).
 For all \(j\in\mn{1}{\kappa}\), then
 \begin{align*}
  \dia{j}&=\ba\usc{j}\dia{j-1}^\mpi\osc{j}&
 &\text{and}&
  \dia{j}&=\ba\osc{j}\dia{j-1}^\mpi\usc{j}.
 \end{align*}
\ecor
\begin{proof}
 In view of \eqref{ps}, this is a consequence of \rthm{M8.1.43.} and \rrem{M8.1.40.}.
\end{proof}

 Now we are going to investigate the sequence of matricial interval lengths \(\seq{\dia{j}}{j}{0}{\kappa}\) associated with a sequence \(\seqska\in\Fggqu{\kappa}\).
 Using \rthm{M8.1.43.} and the arithmetics of the parallel sum of matrices, we derive a formula connecting two consecutive interval lengths \(\dia{j+1}\) and \(\dia{j}\):
\bpropl{P0959}
 Let \(\ug\in\R\), let \(\og\in(\ug,\infp)\), let \(\kappa\in\Ninf\), and let \(\seqska \in\Fggqu{\kappa}\).
 For all \(j\in\mn{0}{\kappa-1}\), then
 \[
  \dia{j+1}=\frac{\og-\ug}{4}\dia{j}-\ba\rk{\su{j+1}-\cen{j}}\dia{j}^\mpi\rk{\su{j+1}-\cen{j}}.
 \]
\eprop
\begin{proof}
 We consider an arbitrary \(j\in\mn{0}{\kappa-1}\).
 According to \rlemp{L1433}{L1433.a}, the matrices \(\usc{j+1}\) and \(\osc{j+1}\) are both \tnnH{}.
 In view of \rlemss{MA.8.7*.}{MA.8.11}, then \((\usc{j+1}+\osc{j+1})-4(\usc{j+1}\ps \osc{j+1})=(\usc{j+1}-\osc{j+1})(\usc{j+1}+\osc{j+1})^\mpi(\usc{j+1}-\osc{j+1})\).
 Using \rrem{M8.1.40.} and \rthm{M8.1.43.}, we get \(\ba\dia{j}-4\dia{j+1}=\ba(\usc{j+1}-\osc{j+1})\dia{j}^\mpi(\usc{j+1}-\osc{j+1})\).
 Since \(2(\su{j+1}-\cen{j})=\usc{j+1}-\osc{j+1}\) holds true, we obtain \(4\dia{j+1}=\ba\dia{j}-4\ba\rk{\su{j+1}-\cen{j}}\dia{j}^\mpi\rk{\su{j+1}-\cen{j}}\).
\end{proof}

 \rprop{P0959} leads us now to an important monotonicity property for the sequence of interval lengths.
 Moreover, the extremal role of the midpoint of the interval in this context will be clear.
\bpropl{C1521}
 Let \(\ug\in\R\), let \(\og\in(\ug,\infp)\), let \(\kappa\in\Ninf\), let \(\seqska \in\Fggqu{\kappa}\), and let \(m\in\mn{1}{\kappa}\).
 Then \(\dia{m}\lleq\frac{\og-\ug}{4}\dia{m-1}\). Furthermore, \(\dia{m}=\frac{\og-\ug}{4}\dia{m-1}\) if and only if \(\su{m}=\cen{m-1}\).
\eprop
\begin{proof}
 The matrices \(\dia{m-1}\) and \(\dia{m}\) are both \tnnH{} according to \rprop{F8.1.44.}.
 Let \(D\defeq\frac{\og-\ug}{4}\dia{m-1}-\dia{m}\).
 Because of \rprop{P0959}, we have \(D=\ba\rk{\su{m}-\cen{m-1}}\dia{m-1}^\mpi\rk{\su{m}-\cen{m-1}}\).
 Since the matrices \(\su{m}\) and \(\cen{m-1}\) are both \tH{} by virtue of \rlem{L1319} and \rrem{R1535}, hence 
 \beql{C1521.1}
  D
  =\ba\rk{\su{m}-\cen{m-1}}^\ad\dia{m-1}^\mpi\rk{\su{m}-\cen{m-1}}.
 \eeq
 According to \rrem{R1546}, the matrix \(\dia{m-1}^\mpi\) is \tnnH{}.
 In view of \(\og>\ug\), then \(D\in\Cggq\) follows.
 
 Now assume \(D=\Oqq\). Because of \(\og>\ug\) and \(\dia{m-1}^\mpi\in\Cggq\), we obtain from \eqref{C1521.1} then \(\sqrt{\dia{m-1}^\mpi}\rk{\su{m}-\cen{m-1}}=\Oqq\).
 In particular, \(\dia{m-1}\dia{m-1}^\mpi\rk{\su{m}-\cen{m-1}}=\Oqq\).
 In view of \rdefnss{D1516}{D1730}, we get
 \(
  \su{m}-\cen{m-1}
  =\frac{1}{2}\rk{\usc{m}-\osc{m}}
 \).
 Hence, \(\ran{\su{m}-\cen{m-1}}\subseteq\ran{\usc{m}}+\ran{\osc{m}}\). Since \(\ran{\usc{m}}+\ran{\osc{m}}=\ran{\dia{m-1}}\) by virtue of \rprop{S8.1.48.}, we have \(\ran{\su{m}-\cen{m-1}}\subseteq\ran{\dia{m-1}}\). Using \rrem{MA.7.8.}, we obtain \(\su{m}-\cen{m-1}=\dia{m-1}\dia{m-1}^\mpi\rk{\su{m}-\cen{m-1}}\) and hence \(\su{m}-\cen{m-1}=\Oqq\).

 Conversely, if \(\su{m}=\cen{m-1}\), then \(D=\Oqq\) follows from \eqref{C1521.1}.
\end{proof}

 We turn our the attention to an interesting subclass of \tabHnnd{} sequences:
\bdefnl{D0927}
 Let \(\ug\in\R\), let \(\og\in(\ug,\infp)\), let \(m\in\NO\) and let \(\seqs{m}\in\Fggqu{m}\).
 Then \(\seqs{m}\) is called \notii{\habHd{}} if \(\dia{m}=\Oqq\), where \(\dia{m}\) is given in \rdefn{D1516}.
 We denote by \symba{\Fggdqu{m}}{f} the set of all sequences \(\seqs{m}\in\Fggqu{m}\) which are \tabHd{}.
\edefn

\bdefnl{D0933}
 Let \(\ug\in\R\), let \(\og\in(\ug,\infp)\), and let \(\seqsinf\in\Fggqinf\).
\benui
 \il{D0933.a} Let \(m\in\NO\).
 Then \(\seqsinf\) is called \notii{\habHdo{m}} if \(\seqs{m}\in\Fggdqu{m}\).
 \il{D0933.b} The sequence \(\seqsinf\) is called \notii{\habHd{}} if there exists an \(m\in\NO\) such that \(\seqsinf\) is \tHdo{m}.
\eenui
 We denote by \symba{\Fggdqinf}{f} the set of all sequences \(\seqsinf\in\Fggqinf\) which are \tabHd{}.
\edefn

\bleml{L0901}
 Let \(\ug\in\R\), let \(\og\in(\ug,\infp)\), let \(m\in\N\), and let \(\seqs{m}\in\Fggqu{m}\).
 Then \(\seqs{m}\) is \tabHd{} if and only if \(\ran{\usc{m}}\cap\ran{\osc{m}}=\set{\Ouu{q}{1}}\).
\elem
\bproof
 Use \rprop{S8.1.48.}.
\eproof

\bleml{L1230}
 Let \(\ug\in\R\), let \(\og\in(\ug,\infp)\), let \(m\in\NO\), and let \(\seqsinf\in\Fggqinf\) be \tabHdo{m}. Then:
 \benui
  \il{L1230.a} If \(m=2n\) with some \(n\in\NO\), then \(\seqsinf\) is \tHdo{n+1} and the sequences $\seqsainf$, $\seqsbinf$, and $\seqscinf$ are \tHdo{n}.
  \il{L1230.b} If \(m=2n+1\) with some \(n\in\NO\), then \(\seqscinf\) is \tHdo{n} and the sequences $\seqsinf$, $\seqsainf$, and $\seqsbinf$ are \tHdo{n+1}.
 \eenui
\elem
\bproof
 First observe that the sequences \(\seqsinf\), \(\seqsainf\), \(\seqsbinf\), and \(\seqscinf\) belong to \(\Hggqinf\) by virtue of \rlem{L0923} and \rprop{L8.1.24.}.
 In view of \rdefnss{D0933}{D0927}, we have \(\dia{m}=\Oqq\).
 According to \rprop{S8.1.48.}, we get \(\ran{\usc{m+1}}+\ran{\osc{m+1}}=\set{\Ouu{q}{1}}\).
 In particular, \(\ran{\usc{m+1}}=\set{\Ouu{q}{1}}\) and \(\ran{\osc{m+1}}=\set{\Ouu{q}{1}}\).
 Using \rcor{M8.1.49.}, this implies \(\ran{\usc{m+2}}=\set{\Ouu{q}{1}}\) and \(\ran{\osc{m+2}}=\set{\Ouu{q}{1}}\).
 Thus, each of the matrices \(\usc{m+1}\), \(\osc{m+1}\), \(\usc{m+2}\), and \(\osc{m+2}\) is the zero matrix \(\Oqq\).
 
 \eqref{L1230.a} In this situation each of the matrices \(\usc{2n+1}\), \(\osc{2n+1}\), \(\usc{2n+2}\), and \(\osc{2n+2}\) is the zero matrix \(\Oqq\).
 In view of \rrem{M8.1.6.}, hence \(\Lau{n}=\Oqq\), \(\Lbu{n}=\Oqq\), \(\Lu{n+1}=\Oqq\), and \(\Lcu{n}=\Oqq\).
 
 \eqref{L1230.b} In this situation each of the matrices \(\usc{2n+2}\), \(\osc{2n+2}\), \(\usc{2n+3}\), and \(\osc{2n+3}\) is the zero matrix \(\Oqq\).
 In view of \rrem{M8.1.6.}, hence \(\Lu{n+1}=\Oqq\), \(\Lcu{n}=\Oqq\), \(\Lau{n+1}=\Oqq\), and \(\Lbu{n+1}=\Oqq\).
\eproof

\bleml{L0922}
 Let \(\ug\in\R\) and let \(\og\in(\ug,\infp)\).
 Then \(\Fggdqinf\subseteq\Hggdqinf\).
\elem
\bproof
 This is a direct consequence of \rlem{L1230}.
\eproof

\bleml{L1604}
 Let \(\ug\in\R\), let \(\og\in(\ug,\infp)\), let \(\seqsinf\in\Fggqinf\), and let \(m\in\NO\).
 If \(\seqsinf\) is \tabHdo{m}, then \(\seqsinf\) is \tabHdo{\ell} for all \(\ell\in\minf{m}\).
\elem
\bproof
 This can be seen from \rcor{M8.1.49.}
\eproof

\bpropl{L0942}
 Let \(\ug\in\R\), let \(\og\in(\ug,\infp)\), let \(m\in\NO\), and let \(\seqsinf\in\Fggqinf\) be \tabHdo{m}. Then \(\su{j}=\cen{j-1}=\umg{j-1}=\omg{j-1}\) for all \(j\in\minf{m+1}\).
\eprop
\bproof
 Let \(\ell\in\minf{m}\). In view of \rlem{L1604} and \rdefnss{D0933}{D0927}, then \(\dia{\ell}=\Oqq\).
 Because of \rdefn{D1516}, hence \(\umg{\ell}=\omg{\ell}\) and thus \(\cen{\ell}=\umg{\ell}\).
 In particular, \([\umg{\ell},\omg{\ell}]=\set{\cen{\ell}}\).
 According to \rdefn{D0846}, furthermore \(\seqs{\ell+1}\in\Fggqu{\ell+1}\). \rpropp{M8.1.37-1.}{M8.1.37-1.a} yields then \(\su{\ell+1}\in[\umg{\ell},\omg{\ell}]\).
 Since \([\umg{\ell},\omg{\ell}]=\set{\cen{\ell}}\), this implies \(\su{\ell+1}=\cen{\ell}\).
\eproof

\bpropl{L1610}
 Let \(\ug\in\R\), let \(\og\in(\ug,\infp)\), let \(m\in\NO\), and let \(\seqs{m}\in\Fggdqu{m}$.
 Then there exists a unique sequence \(\seq{\su{j}}{j}{m+1}{\infi}\) from \(\Cqq\) such that \(\seqsinf\in\Fggqinf\).
\eprop
\bproof
 Combine \rpropss{P0710}{L0942}.
\eproof

 \rprop{L1610} leads us to the following notion:
\bdefnl{D0839}
 Let \(\ug\in\R\), let \(\og\in(\ug,\infp)\), let \(m\in\NO\), and let \(\seqs{m}\in\Fggdqu{m}$.
 Let \(\seq{\su{j}}{j}{m+1}{\infi}\) be the unique sequence from \(\Cqq\) such that \(\seqsinf\in\Fggqinf\).
 Then \(\seqsinf\) is called the \noti{\tabHdso{\(\seqs{m}\)}}{h@\habHdso{\(\seqs{m}\)}}.
\edefn

 The considerations of this section lead us to a further interesting subclass of \(\Fggqinf\):
\bdefnl{D1509}
 Let \(\ug\in\R\), let \(\og\in(\ug,\infp)\), and let \(\seqsinf\in\Fggqinf\). 
 If \(m\in\N\) is such that \(\su{j}=\cen{j-1}\) for all \(j\in\minf{m}\), where \(\cen{j-1}\) is given by \rdefn{D1516}, then \(\seqsinf\) is called \notii{\habHco{m}}.
 If there exists an \(\ell\in\N\) such that \(\seqsinf\) is \tabHco{\ell}, then \(\seqsinf\) is simply called \notii{\habHc{}}.
\edefn

\breml{R1820}
 Let \(\ug\in\R\), let \(\og\in(\ug,\infp)\), let \(\seqsinf\in\Fggqinf\), and let \(m\in\N\).
 If \(\seqsinf\) is \tabHco{m}, then \(\seqsinf\) is \tabHco{\ell} for all \(\ell\in\minf{m}\).
\erem

\bpropl{L1526}
 Let \(\ug\in\R\), let \(\og\in(\ug,\infp)\), let \(m\in\N\), and let \(\seqsinf\in\Fggqinf\). Then \(\seqsinf\) is \tabHco{m} if and only if \(\dia{\ell}=\frac{\og-\ug}{4}\dia{\ell-1}\) for all \(\ell\in\minf{m}\).
\eprop
\bproof
 This is a direct consequence of \rprop{C1521}.
\eproof

 A closer look at \rpropss{C1521}{L1526} shows that the role of the central sequences in the context of \tabHnnd{} sequences is comparable with the role of the central sequences in the context of \tnn{} definite sequences from \(\Cqq\) or of the central sequences of \tpqa{Schur} sequences.
 On the one side we see that starting from some index the interval lengths are maximal with respect to the L\"owner semi-ordering in the set \(\CHq\).
 On the other side~\zitaa{MR885621}{\clem{6}} and~\zitaa{MR918682}{\clem{7}} indicate that the semi-radii of the corresponding matrix balls are maximal with respect to the L\"owner semi-ordering in the set \(\CHq\) as well for central \tnn{} definite sequences from \(\Cqq\)  as for central \tpqa{Schur} sequences, resp.

 In view of \rdefn{D1509}, we obtain from \rprop{L0942} the following result:
\breml{L1801}
 Let \(\ug\in\R\), let \(\og\in(\ug,\infp)\), let \(m\in\NO\), and let \(\seqsinf\in\Fggqinf\) be \tabHdo{m}. Then \(\seqsinf\) is \tabHco{m+1}.
\erem

\bleml{L1738}
 Let \(\ug\in\R\), let \(\og\in(\ug,\infp)\), let \(m\in\NO\), and let \(\seqs{m}\in\Fggdqu{m}\). Then the \tabHdso{\(\seqs{m}\)} is \tabHdo{m} and \tabHco{m+1}.
\elem
\bproof
 The \tabHd{} sequence \(\seqsinf\) associated to \(\seqs{m}\) belongs to \(\Fggqinf\) by definition and fulfills \(\seqs{m}\in\Fggdqu{m}\).
 In view of \rdefnp{D0933}{D0933.a}, hence \(\seqsinf\) is \tabHdo{m}.
 According to \rrem{L1801}, then \(\seqsinf\) is \tabHco{m+1}.
\eproof

 From \rlemss{L1738}{L1230} we obtain the following result:
\breml{R0735}
 Let \(\ug\in\R\), let \(\og\in(\ug,\infp)\), let \(m\in\NO\), and let \(\seqs{m}\in\Fggdqu{m}\). Then the \tabHdso{\(\seqs{m}\)} is \tHdo{n+1}, where \(n\) is the unique number from \(\NO\) with \(m=2n\) or \(m=2n+1\).
\erem

 \rsec{S0747} has some points of touch with the paper Dette/Studden~\zita{MR1883272}, where the moment space of a matrix measure \(\sigma\in\Mggqa{[0,1]}\) is studied and where it is assumed that the corresponding block Hankel matrices are non-singular.
 The strategy used in~\zita{MR1883272} is mainly based on using convexity techniques and constructing canonical representations for the points of the moment space as this was done in the classical scalar case by M.~G.~Krein~\zita{MR0044591} (see also Krein/Nudelman~\zitaa{MR0458081}{\cchap{3}}).
An important tool used in~\zita{MR1883272} is given by a matricial generalization of the theory of classical canonical moments (see the monograph Dette/Studden~\zita{MR1468473}).

\section{On the problem of \habHnnd{} extension}\label{S1501}
 
 Let \(m\in\NO\) and let \(\seqs{m}\in\Fggqu{m}\).
 Then against to the background of \rprop{M8.1.37-1.} we will be able to describe the set \(\setaa{\su{m+1}\in\Cqq}{\seqs{m+1}\in\Fggqu{m+1}}\).
\bleml{MTM}
 Let \(\ug\in\R\), let \(\og\in(\ug,\infp)\), let  \(m\in\NO \), and let \(\seqs{m}\) be a sequence from \(\Cqq\). Then:
 \benui
  \il{MTM.a} If \(\seqs{m}\in\Fggqu{m}\), then \([\umg{m},\omg{m}]\neq\emptyset\) and \(\seqs{m+1}\in\Fggqu{m+1}\) for all \(\su{m+1}\in[\umg{m},\omg{m}]\).
  \il{MTM.b} If \(\seqs{m}\in\Fgqu{m}\), then \((\umg{m},\omg{m})\neq\emptyset\) and \(\seqs{m+1}\in\Fgqu{m+1}\) for all \(\su{m+1}\in(\umg{m},\omg{m})\).
 \eenui
\elem
\begin{proof}
 Assume \(\seqs{m}\in\Fggqu{m}\) (resp.\ \(\seqs{m}\in\Fgqu{m}\)).
 From \rprop{F8.1.44.} we know that \(\dia{m}\) it \tnnH{} (resp.\ \tpH{}).
 In view of \rdefn{D1516}, \rlem{L0719} yields then \([\umg{m},\omg{m}]\neq\emptyset\) (resp.\ \((\umg{m},\omg{m})\neq\emptyset\)). \rprop{S8.1.23.} provides us \(\seqs{m} \in\Hggequ{m}\) (resp.\ \(\seqs{m} \in\Hgequ{m}\)).
 We consider an arbitrary \(\su{m+1}\in[\umg{m},\omg{m}]\) (resp.\ \(\su{m+1}\in(\umg{m},\omg{m})\)). Then \(\su{m+1}\in\CHq\) and \(\umg{m}\lleq\su{m+1}\lleq\omg{m}\) (resp.\ \(\umg{m}<\su{m+1}<\omg{m}\)).

 First we discuss the case \(m=0\).
 In view of \rrem{R1531} and \rnota{N5.1},  then \(\Hau{0}\) and \(\Hbu{0}\) are both \tnnH{} (resp.\ \tpH{}).
 Thus,  \(\seqs{1}\in\Fggqu{1}\) (resp.\ \(\seqs{1}\in\Fgqu{1}\)) follows from \rdefnp{D1159}{D1159.a}.
 
 Now let \(m=1\).
 Then \rrem{R1531} and \rnota{N5.1} yield \(\Trip{1}\le \su{2}\) and \(\Hcu{0}\in\Cggq \) (resp.\ \(\Trip{1}<\su{2}\) and \(\Hcu{0}\in\Cgq \)).
 Since \(\seqs{1}\)  belongs to \(\Hggequ{1}\) (resp.\ \(\Hgequ{1}\)), then we see from \rpropss{P0906}{P1033} that \(\seqs{2}\) belongs to \(\Hggqu{2}\), (resp.\ \(\Hgqu{2}\)) \ie{}, the matrix \(\Hu{1}\) is \tnnH{} (resp.\ \tpH{}).
 Thus, \rdefnp{D1159}{D1159.b} shows that \(\seqs{2} \in\Fggqu{2}\) (resp.\ \(\seqs{2} \in\Fgqu{2}\)).
 
 Now we consider the case that \(m=2n\) is valid where \(n\) is some positive integer.
 According to  \(\umg{2n}\le\su{2n+1}\le \omg{2n}\) (resp.\ \(\umg{2n}<\su{2n+1}<\omg{2n}\)), \rdefn{D1719} and \rnota{N5.1}, then 
\begin{align}\label{TAB}
 \Tripa{n}&\le \sau{2n},&
 \Tripb{n}&\le \sbu{2n}&
 \text{(resp.\ }\Tripa{n}&<\sau{2n},&
 \Tripb{n}&<\sbu{2n}\text{)}.
\end{align}
 Because of \rpropp{L8.1.24.}{L8.1.24.a} and \rprop{S8.1.23.}, we have \(\set{ (\sau{j})_{j=0}^{2n-1}, (\sbu{j})_{j=0}^{2n-1}}\subseteq \Fggqu{2n-1}\subseteq \Hggequ{2n-1}\).
 For \(\seqs{2n}\in\Fgqu{2n}\), we obtain \(\seqs{2n-1}\in\Fgqu{2n-1}\) by virtue of \rpropp{M8.1.21.}{M8.1.21.b} and \(\set{\sau{2n-1},\sbu{2n-1}}\subseteq\CHq\) according to \rlem{L1319}.
 Because of \rdefnp{D1159}{D1159.a} and \rrem{L1034}, thus \(\set{ (\sau{j})_{j=0}^{2n-1}, (\sbu{j})_{j=0}^{2n-1}}\subseteq\Hgequ{2n-1}\) if \(\seqs{2n}\in\Fgqu{2n}\). 
 Using \eqref{TAB} and \rpropss{P0906}{P1033}, we conclude \(\set{ (\sau{j})_{j=0}^{2n}, (\sbu{j})_{j=0}^{2n}}\subseteq \Hggqu{2n}\) (resp.\ \(\set{ (\sau{j})_{j=0}^{2n}, (\sbu{j})_{j=0}^{2n}}\subseteq \Hgqu{2n}\)).
 Hence, \rdefnp{D1159}{D1159.a} implies \(\seqs{2n+1} \in\Fggqu{2n+1}\) (resp.\ \(\seqs{2n+1} \in\Fgqu{2n+1}\)).
 
 Finally, we consider now the case that there is a positive integer \(n\) such that \(m=2n+1\).
 From \(\umg{2n+1}\le\su{2n+2}\le \omg{2n+1}\) (resp.\ \(\umg{2n+1}<\su{2n+2}<\omg{2n+1}\)), \rdefn{D1719} and \rnota{N5.1} we get then
\begin{align}\label{NTW}
 \Trip{n+1}&\le\su{2n+2},&
 \Tripc{n}&\le \scu{2n}&
 \text{(resp.\ }\Trip{n+1}&<\su{2n+2},&
 \Tripc{n}&<\scu{2n}\text{)}.
\end{align}
 From \(\seqs{2n+1} \in\Fggqu{2n+1}\) (resp.\ \(\seqs{2n+1} \in\Fgqu{2n+1}\)) and \rprop{S8.1.23.} we get  \(\seqs{2n+1} \in \Hggequ{2n+1}\) (resp.\ \(\seqs{2n+1} \in \Hgequ{2n+1}\)).
 In view of \rpropp{L8.1.24.}{L8.1.24.b} and \rprop{S8.1.23.}, we have \((\scu{j})_{j=0}^{2n-1}\in \Fggqu{2n-1}\subseteq\Hggequ{2n-1}\).
 For \(\seqs{2n+1}\in\Fgqu{2n+1}\), we obtain \(\seqs{2n}\in\Fgqu{2n}\) by virtue of \rpropp{M8.1.21.}{M8.1.21.b} and \(\scu{2n-1}\in\CHq\) according to \rlem{L1319} and \rnota{N5.1}.
 Because of \rdefnp{D1159}{D1159.b} and \rrem{L1034}, thus \(\seqsc{2n-1}\in\Hgequ{2n-1}\) if \(\seqs{2n+1}\in\Fgqu{2n+1}\).
 Taking into account \eqref{NTW} and \rpropss{P0906}{P1033}, we infer then \(\seqs{2n+2} \in \Hggqu{2n+2}\) and  \((\scu{j})_{j=0}^{2n}\in\Hggqu{2n}\) (resp.\ \(\seqs{2n+2} \in \Hgqu{2n+2}\) and  \((\scu{j})_{j=0}^{2n}\in\Hgqu{2n}\)).
 Because of \rdefnp{D1159}{D1159.b}, we conclude \( \seqs{2n+2} \in \Fggqu{2n+2}\) (resp.\ \( \seqs{2n+2} \in \Fgqu{2n+2}\)).
\end{proof}

\bthml{P1023}
 Let \(m\in\NO\) and let \(\seqs{m}\) be a sequence from \(\Cqq\). Then:
 \benui
  \il{P1023.a} If \(\seqs{m}\in\Fggqu{m}\), then \([\umg{m},\omg{m}]\neq\emptyset\) and
 \[
  \setaa*{\su{m+1}\in\Cqq}{\seqs{m+1}\in\Fggqu{m+1}}
  =[\umg{m},\omg{m}].
 \]
  \il{P1023.b} If \(\seqs{m}\in\Fgqu{m}\), then \((\umg{m},\omg{m})\neq\emptyset\) and
 \[
  \setaa*{\su{m+1}\in\Cqq}{\seqs{m+1}\in\Fgqu{m+1}}
  =(\umg{m},\omg{m}).
 \]
 \eenui
\ethm
\bproof
 Combine \rlem{MTM} and \rprop{M8.1.37-1.}.
\eproof

 A closer look at the proof of \rthm{P1023} shows that we now immediately obtain purely algebraic proofs of some statements formulated in \rsec{S1237}, which were proved there using results on moment problems. This concerns \rthm{LHP}, \rprop{P1750}, \rcor{C1754}, \rprop{P0710}, and \rthm{T0718}.

\bthml{T1537}
 Let \(\ug\in\R\),  \(\og\in(\ug,\infp)\), and \(m\in\NO\).
 Then \(\Fgequ{m}=\Fgqu{m}\).
\ethm
\begin{proof}
 Combine \rcor{C1342} and \rthmp{P1023}{P1023.b}.
\end{proof}

\bpropl{L1702}
 Let \(\ug\in\R\), let \(\og\in(\ug,\infp)\), let \(m\in\NO\), let \(\seqs{m}\in\Fggqu{m}\), and let \(\su{m+1}\in\set{\umg{m},\omg{m}}\).
 Then \(\seqs{m+1}\in\Fggdqu{m+1}\).
\eprop
\bproof
 From \rthmp{P1023}{P1023.a} and \rremp{R1503}{R1503.a} we obtain \(\seqs{m+1}\in\Fggqu{m+1}\).
 In view of \rdefn{D1730}, we have furthermore \(\usc{m+1}=\Oqq\) or \(\osc{m+1}=\Oqq\).
 In particular, \(\ran{\usc{m+1}}\cap\ran{\osc{m+1}}=\set{\Ouu{q}{1}}\). 
 Thus, \rlem{L0901} yields \(\seqs{m+1}\in\Fggdqu{m+1}\).
\eproof
 
 In view of \rpropsss{L1610}{L1702}{L0942}, the following notions seem to be natural:
\bdefnl{D0842}
 Let \(\ug\in\R\), let \(\og\in(\ug,\infp)\), let \(m\in\NO\), and let \(\seqs{m}\in\Fggqu{m}\).
 Let the sequence \((\su{j})_{j=m+1}^\infi\) be recursively defined by \(\su{j}\defeq\umg{j-1}\) (resp.\ \(\su{j}\defeq\omg{j-1}\)).
 Then \(\seqsinf\) is called the \emph{lower} (resp.\ \emph{upper}) \noti{\habHdso{\(\seqs{m}\)}}{h@\habHdlso{\(\seqs{m}\)}}\index{h@\habHduso{\(\seqs{m}\)}}.
\edefn

\bpropl{L1726}
 Let \(\ug\in\R\), let \(\og\in(\ug,\infp)\), let \(m\in\NO\), and let \(\seqs{m}\in\Fggqu{m}\). Then the lower (resp.\ upper) \tabHdso{\(\seqs{m}\)} belongs to \(\Fggqinf\) and is \tabHdo{m+1} as well as \tabHco{m+2}.
\eprop
\bproof
 Denote by \(\seqsinf\) the lower (resp.\ upper) \tabHdso{\(\seqs{m}\)}.
 In view of \rdefn{D0846} and \rremp{R1503}{R1503.a}, we conclude \(\seqsinf\in\Fggqinf\) by successive application of \rthmp{P1023}{P1023.a}.
 According to \rdefn{D0842} and \rprop{L1702}, furthermore \(\seqs{m+1}\in\Fggdqu{m+1}\). In view of \rdefnp{D0933}{D0933.a}, thus \(\seqsinf\) is \tabHdo{m+1}.
 By virtue of \rrem{L1801}, then \(\seqsinf\) is \tabHco{m+2}.
\eproof

 From \rprop{L1726} and \rlem{L1230} we obtain the following result:
\breml{R0738}
 Let \(\ug\in\R\), let \(\og\in(\ug,\infp)\), let \(m\in\NO\), and let \(\seqs{m}\in\Fggqu{m}\).
 Then the lower as well as the \tabHduso{\(\seqs{m}\)} is \tHdo{n+1}, where \(n\) is the unique number from \(\NO\) with \(m=2n-1\) or \(m=2n\).
\erem
 
\bleml{L0748}
 Let \(\ug\in\R\), let \(\og\in(\ug,\infp)\), let \(n\in\N\), and let \(\seqs{2n-1}\in\Fggqu{2n-1}\).
 Then the \tabHdlso{\(\seqs{2n-1}\)} is \tHdo{n}.
\elem
\bproof
 Denote by \(\seqsinf\) the \tabHdlso{\(\seqs{2n-1}\)}.
 According to \rprop{L1726}, then \(\seqsinf\in\Fggqinf\).
 In particular, \(\seqsinf\in\Hggqinf\) by virtue of \rlem{L0923}.
 In view of \rdefn{D0842}, we have furthermore $\su{2n}=\umg{2n-1}$.
 Thus, \(\su{2n}=\Trip{n}\) according to \rdefn{D1719}.
 From \rnotap{N41}{N41.c} we see then \(\Lu{n}=\Oqq\).
\eproof

\bdefnl{D1527}
 Let \(\ug\in\R\), let \(\og\in(\ug,\infp)\), let \(m\in\NO\), and let \(\seqs{m}\in\Fggqu{m}$.
 Let the sequence \((\su{j})_{j=m+1}^\infi\) be recursively defined by \(\su{j}\defeq\cen{j-1}\), where \(\cen{j-1}\) is given by \rdefn{D1516}.
 Then \(\seqsinf\) is called the \notii{\habHcso{\(\seqs{m}\)}}.
\edefn

\bpropl{R1532}
 Let \(\ug\in\R\), let \(\og\in(\ug,\infp)\), let \(m\in\NO\), and let \(\seqs{m}\in\Fggqu{m}$.
 Then the \tabHcso{\(\seqs{m}\)} is \tabHnnd{} and \tabHco{m+1}.
\eprop
\bproof
 Denote by \(\seqsinf\) the \tabHcso{\(\seqs{m}\)}.
 In view of \rdefn{D0846} and \rremp{R1503}{R1503.a}, we conclude \(\seqsinf\in\Fggqinf\) by successive application of \rthmp{P1023}{P1023.a}.
 By virtue of \rdefnss{D1527}{D1509}, then \(\seqsinf\) is \tabHco{m+1}.
\eproof

\bpropl{R1534}
 Let \(\ug\in\R\), let \(\og\in(\ug,\infp)\), let \(m\in\NO\), and let \(\seqs{m}\in\Fgqu{m}$.
 Then the \tabHcso{\(\seqs{m}\)} is \tabHpd{}.
\eprop
\bproof
 In view of \rdefn{D0846} and \rremp{R1503}{R1503.b}, we conclude by successive application of \rthmp{P1023}{P1023.b}, that the \tabHcso{\(\seqs{m}\)} belongs to \(\Fgqinf\).
\eproof

\bpropl{R1839}
 Let \(\ug\in\R\), let \(\og\in(\ug,\infp)\), let \(m\in\NO \), and let \(\seqs{m}\in\Fgqu{m}\).
 Then there exists a sequence \((s_j)_{j=m+1}^\infi\) of complex \tqqa{matrices} such that \(\seqsinf\in\Fgqinf\).
\eprop
\begin{proof}
 Use \rprop{R1534}.
\end{proof}

\section{Applications to the moment problem~\mproblem{\ab}{m}{=}}\label{S0733}
 In this section, we discuss first applications of the preceding investigations on the structure of matricial \tabHnnd{} sequences.
 The following statement concretizes \rprop{P0756}.

\begin{thm}\label{MMH}
 Let \(\ug\in\R\), let \(\og\in(\ug,\infp)\), let \(m\in\NO \), and let \(\seqs{m}\in\Fggqu{m}\).
 Then \(\nextmom{m}\) given by \eqref{SGG} admits the representation 
\(
 \nextmom{m}%
 =[\umg{m},\omg{m}]%
\)
 where \(\umg{m}\) and \(\omg{m}\) are given in \rdefn{D1719}.
\end{thm}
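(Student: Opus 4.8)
The plan is to derive \rthm{MMH} by chaining the two preparatory results that have already been established, so that no new computation is needed. First I would invoke \rprop{P0756}: under the standing hypothesis \(\seqs{m}\in\Fggqu{m}\), that proposition yields not only \(\nextmom{m}\neq\emptyset\) but the crucial identification
\(\nextmom{m}=\setaa{\su{m+1}\in\Cqq}{\seqs{m+1}\in\Fggqu{m+1}}\).
This step is the conceptual pivot: it replaces the extremal description of the measure-theoretic set \(\nextmom{m}\) by a purely algebraic question about one-step extendability of \tabHnnd{} sequences. Next I would apply \rthmp{P1023}{P1023.a}, which — again precisely under \(\seqs{m}\in\Fggqu{m}\) — asserts that \([\umg{m},\omg{m}]\neq\emptyset\) and that \(\setaa{\su{m+1}\in\Cqq}{\seqs{m+1}\in\Fggqu{m+1}}=[\umg{m},\omg{m}]\). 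Substituting this into the identity obtained from \rprop{P0756} gives \(\nextmom{m}=[\umg{m},\omg{m}]\), which is exactly the asserted representation; the nonemptiness of \(\nextmom{m}\) recorded in \rprop{P0756} is then consistent with \([\umg{m},\omg{m}]\neq\emptyset\) from \rthmp{P1023}{P1023.a}.

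At the level of \rthm{MMH} itself I do not expect any obstacle: the theorem is a two-line consequence of \rprop{P0756} and \rthmp{P1023}{P1023.a}. The genuine difficulty has been displaced into \rthmp{P1023}{P1023.a}, whose proof rests on \rlem{MTM} (every matrix in the closed matricial interval \([\umg{m},\omg{m}]\) yields an admissible extension) and on \rprop{M8.1.37-1.} (the converse inclusion, that any admissible extension must lie in that interval); these in turn rely on the nonnegativity of the interval length \(\dia{m}\) furnished by \rprop{F8.1.44.}, which is itself extracted from the parallel-sum formula of \rthm{M8.1.43.}. Consequently, if one wished to bypass \rthmp{P1023}{P1023.a} and argue directly, the hard part would be re-establishing those parallel-sum identities relating the consecutive Schur complements \(\usc{j}\) and \(\osc{j}\); since all of this machinery is already in place, I would simply cite it. I would therefore present the proof as: combine \rprop{P0756} with \rthmp{P1023}{P1023.a}.
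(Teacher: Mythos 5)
Your proof is correct and is exactly the argument the paper gives: Theorem~\ref{MMH} is obtained by combining Proposition~\ref{P0756} with Theorem~\ref{P1023}(a). Your additional remarks on where the real work lies (Lemma~\ref{MTM}, Proposition~\ref{M8.1.37-1.}, and the parallel-sum machinery behind Proposition~\ref{F8.1.44.}) accurately reflect the paper's structure.
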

\begin{proof}
 Combine \rprop{P0756} and \rthmp{P1023}{P1023.a}.
\end{proof}

\begin{thm}\label{LHPUN}
 Let \(\ug\in\R\), let \(\og\in(\ug,\infp)\), and let \(\seqsinf \) be a sequence from \(\Cqq\).
 Then \(\MggqFsinf \ne \emptyset\) if and only if \(\seqsinf  \in\Fggqinf\). In this case, the set \(\MggqFsinf\) contains exactly one element.
\end{thm}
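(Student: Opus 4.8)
The plan is to handle the two implications separately and then establish uniqueness. One direction is immediate from \rthm{LHP}: if $\sigma\in\MggqFsinf$, then $\sigma\in\MggqFs{m}$ for every $m\in\NO$, so \rthm{LHP} gives $\seqs{m}\in\Fggqu{m}$ for every $m\in\NO$, which by \rdefn{D0846} means precisely $\seqsinf\in\Fggqinf$. (This is essentially the content of \rlem{L0703} read in reverse.)

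For the converse, suppose $\seqsinf\in\Fggqinf$. By \rprop{P1750} (or directly by \rdefn{D0846}) we have $\seqs{m}\in\Fggqu{m}$ for every $m\in\NO$, so \rthm{LHP} supplies a measure $\sigma_m\in\MggqFs{m}$ for each $m$; alternatively, one could take the molecular measures furnished by \rthm{T1019}. Since $\suo{0}{\sigma_m}=\su{0}$ for all $m$, the family $(\sigma_m)_{m\in\NO}$ has uniformly bounded total mass, so a matricial version of the Helly--Prokhorov theorem (see~\cite[Lemma~2.2.1]{MR1152328}) yields a subsequence $(\sigma_{m_k})_{k\in\N}$ converging weakly to some $\sigma\in\MggqF$; by \eqref{ab8} then $\sigma\in\MggquF{\infi}$. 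Fix $j\in\NO$. For all $k$ with $m_k\ge j$ we have $\suo{j}{\sigma_{m_k}}=\su{j}$, and since $x\mapsto x^j$ is continuous and bounded on the compact set $\ab$, weak convergence gives $\suo{j}{\sigma}=\lim_{k\to\infty}\suo{j}{\sigma_{m_k}}=\su{j}$. Hence $\sigma\in\MggqFsinf$; in particular this set is non-empty.

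It remains to show uniqueness. Let $\sigma,\tau\in\MggqFsinf$. Then $\int_{\ab}P\dif\sigma=\int_{\ab}P\dif\tau$ for every complex polynomial $P$, by linearity of the integral in the moments. Applying the Weierstrass approximation theorem (say, to the scalar \tnn{} measures $B\mapsto\innerE{x}{\sigma(B)x}$ and $B\mapsto\innerE{x}{\tau(B)x}$ for each $x\in\Cq$, or entrywise to the difference of $\sigma$ and $\tau$), the integrals of $\sigma$ and $\tau$ agree on every continuous complex-valued function on $\ab$, and the Riesz representation theorem forces $\sigma=\tau$.

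The only genuinely non-routine point is the compactness step: one must verify that the chosen matricial analogue of the Helly--Prokhorov selection theorem applies to the sequence $(\sigma_m)$ in $\MggqF$ and that weak convergence is compatible with integration of the bounded continuous monomials $x^j$ over the compact interval $\ab$. Everything else reduces to \rthm{LHP}, the Weierstrass approximation theorem, and the Riesz representation theorem.
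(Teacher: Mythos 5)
Your proposal is correct and follows essentially the same route as the paper: the paper likewise reduces the infinite problem to the truncated ones via a matricial Helly--Prokhorov selection argument, invokes \rthm{LHP} for each truncation, and appeals to the boundedness of \(\ab\) for uniqueness (the ``well-known fact'' you make explicit via Weierstrass approximation and the Riesz representation theorem). You have merely written out in detail the steps the paper compresses into citations.
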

\begin{proof}
 In view of a matricial version of the Helly-Prohorov Theorem (see, \eg{}~\cite[Satz~9, Bemerkung~2]{MR975253}), it is readily checked that \(\MggqFs{\infi}\neq\emptyset\) if and only if  \(\MggqFs{m}\neq\emptyset\) for all \(m\in\NO \).
 (The essential idea of this argumentation is originated in~\zitaa{MR0184042}{proof of \ctheo{2.1.1}}).
 In view of \rdefn{D0846}, then the asserted equivalence follows using \rthm{LHP}.
 Since the interval \(\ab\) is bounded, it is a well-known fact that the set \(\MggqFs{\infi}\) contains at most one element.
 This completes the proof.
\end{proof}

 Now we turn our attention to the special molecular solutions of truncated matricial \tabH{} moment problems, which were constructed in \rsec{S1626} (see \rthm{T1019} and \rprop{P1050}).

\bpropl{P1809}
 Let \(\ug\in\R\), let \(\og\in(\ug,\infp)\), let \(m\in\NO\), and let \(\seqs{m}\in\Fggqu{m}\). Then:
 \benui
  \il{P1809.a} The lower (resp.\ upper) \tabHd{} sequence \(\seqsl\) (resp.\ \(\seqsu\)) associated to \(\seqs{m}\) belongs to \(\Fggdqinf\).
	\il{P1809.b} The set \(\Mggoaag{q}{\ab}{\seqsl}\) contains exactly one element \(\cdablm{m}\) and the set \(\Mggoaag{q}{\ab}{\seqsu}\) contains exactly one element \(\cdabum{m}\).
 \eenui
\eprop
\bproof
 \eqref{P1809.a} Use \rprop{L1726}.

 \eqref{P1809.b} In view of~\eqref{P1809.a}, this follows from \rthm{LHPUN}.
\eproof

\bdefnl{D1824}
 Let \(\ug\in\R\), let \(\og\in(\ug,\infp)\), let \(m\in\NO\), and let \(\seqs{m}\in\Fggqu{m}\). Then the \tnnH{} \tqqa{measure} \(\cdablm{m}\) (resp.\ \(\cdabum{m}\)) is called the \emph{lower} (resp.\ \emph{upper}) \notii{\tCDabmo{\(\seqs{m}\)}}. 
\edefn

\bpropl{L1221}
 Let \(\ug\in\R\), let \(\og\in(\ug,\infp)\), let \(n\in\N\), and let \(\seqs{2n-1}\in\Fggqu{2n-1}\). Then:
\benui
 \il{L1221.a} Denote by \(\cdablm{2n-1}\) the \tCDablmo{\(\seqs{2n-1}\)}. Then \(\seqs{2n-1}\in\Hggequ{2n-1}\) and \(\cdablm{2n-1}\) is the restriction onto \(\BorF\) of the \tCDm{} \(\cdm{n}\) associated with \(\seqs{2n-1}\).
 \il{L1221.b} Denote by \(\cdabum{2n-1}\) the \tCDabumo{\(\seqs{2n-1}\)}. Let \(\su{2n}\defeq\omg{2n-1}\) and let \(\su{2n+1}\defeq\omg{2n}\). Then \(\seqs{2n+1}\in\Hggequ{2n+1}\) and \(\cdabum{2n-1}\) is the restriction onto \(\BorF\) of the \tCDm{} \(\cdm{n+1}\) associated with \(\seqs{2n+1}\).
 \eenui
\eprop
\bproof
 \eqref{L1221.a} In view of \rlem{FR5.3.}, we have \(\seqs{2n-1}\in\Hggequ{2n-1}\).
 Denote by \(\seqsinf\) the \tabHdlso{\(\seqs{2n-1}\)}.
 According to \rlem{L0748}, then \(\seqsinf\) is \tHdo{n}.
 By virtue of \rthm{T1002}, thus \(\seq{\su{j}}{j}{2n}{\infi}\) coincides with the unique sequence from \rthmp{T1002}{T1002.a} and therefore \(\MggqRsg{\infi}=\set{\cdm{n}}\).
 By definition, \(\cdablm{2n-1}\) belongs to \(\MggqFs{\infi}\).
 Then, \(\mu\colon\BorR\to\Cggq\) defined by \(\mu(B)\defeq\cdablm{2n-1}\rk{B\cap\ab}\) belongs to \(\MggqRsg{\infi}\).
 Consequently, \(\mu=\cdm{n}\).
 Since the restriction of \(\mu\) onto \(\BorF\) is \(\cdablm{2n-1}\), the proof of \rpart{L1221.a} is complete.

 \eqref{L1221.b} Applying \rprop{L1702} twice, we see that \(\seqs{2n+1}\) belongs to \(\Fggdqu{2n+1}\).
 Denote by \(\seqsinf\) the \tabHduso{\(\seqs{2n+1}\)}.
 In view of \rdefn{D0842}, the sequence \(\seqsinf\) coincides with the \tabHduso{\(\seqs{2n-1}\)}.
 Hence, \(\set{\cdabum{2n-1}}=\MggqFs{\infi}\) by definition.
 In view of \(\seqs{2n+1}\in\Fggdqu{2n+1}\), \rprop{L0942} yields \(\su{j}=\umg{j-1}\) for all \(j\in\minf{2n+2}\).
 According to \rdefn{D0842}, then \(\seqs{\infi}\) is the \tabHdlso{\(\seqs{2n+1}\)}.
 By definition, then \(\set{\cdablm{2n+1}}=\MggqFs{\infi}\).
 Hence, \(\cdabum{2n-1}=\cdablm{2n+1}\).
 The application of \rpart{L1221.a} to the sequence \(\seqs{2n+1}\) completes the proof.
\eproof

\bleml{L0725}
 Let \(\ug\in\R\), let \(\og\in(\ug,\infp)\), let \(n\in\NO\), and let \(\seqs{2n}\in\Fggqu{2n}\). Then:
 \benui
  \il{L0725.a} Denote by \(\cdablm{2n}\) the \tCDablmo{\(\seqs{2n}\)} and let \(\su{2n+1}\defeq\umg{2n}\). Then \(\seqs{2n+1}\in\Hggequ{2n+1}\) and \(\cdablm{2n}\) is the restriction onto \(\BorF\) of the \tCDm{} \(\cdm{n+1}\) associated with \(\seqs{2n+1}\).
  \il{L0725.b} Denote by \(\cdabum{2n}\) the \tCDabumo{\(\seqs{2n}\)} and let \(\su{2n+1}\defeq\omg{2n}\). Then \(\seqs{2n+1}\in\Hggequ{2n+1}\) and \(\cdabum{2n}\) is the restriction onto \(\BorF\) of the \tCDm{} \(\cdm{n+1}\) associated with \(\seqs{2n+1}\).
 \eenui
\elem
\bproof
 \eqref{L0725.a} From \rprop{L1702} we see that \(\seqs{2n+1}\) belongs to \(\Fggqu{2n+1}\).
 In view of \rdefn{D0842}, the lower \tabHd{} sequence \(\seqsinf\) associated with \(\seqs{2n}\) coincides with the \tabHdlso{\(\seqs{2n+1}\)}.
 Hence, \(\cdablm{2n}\) is the lower \tCDm{} \(\cdablm{2n+1}\) associated with \(\seqs{2n+1}\) and \(\ab\).
 The application of \rpropp{L1221}{L1221.a} to the sequence \(\seqs{2n+1}\) completes the proof of~\eqref{L0725.a}.
 
 \eqref{L0725.b} From \rprop{L1702} we see that \(\seqs{2n+1}\) belongs to \(\Fggdqu{2n+1}\).
 Denote by \(\seqsinf\) the \tabHduso{\(\seqs{2n+1}\)}.
 In view of \rdefn{D0842}, then \(\seqsinf\) is the \tabHduso{\(\seqs{2n}\)}.
 Hence, \(\set{\cdabum{2n}}=\MggqFs{\infi}\) by definition.
 In view of \(\seqs{2n+1}\in\Fggdqu{2n+1}\), \rprop{L0942} yields \(\su{j}=\umg{j-1}\) for all \(j\in\minf{2n+2}\).
 According to \rdefn{D0842}, then \(\seqs{\infi}\) is the \tabHdlso{\(\seqs{2n+1}\)}.
 By definition, then \(\set{\cdablm{2n+1}}=\MggqFs{\infi}\).
 Hence, \(\cdabum{2n}=\cdablm{2n+1}\).
 The application of \rpropp{L1221}{L1221.a} to the sequence \(\seqs{2n+1}\) completes the proof of \rpart{L0725.b}.
\eproof

\bcorl{C1800}
 Let \(\ug\in\R\), let \(\og\in(\ug,\infp)\), let \(m\in\NO\), and let \(\seqs{m}\in\Fggqu{m}\). Then \(\set{\cdablm{m},\cdabum{m}}\subseteq\MggqFs{m}\cap\MggqmolF\)
\ecor
\bproof
 Combine \rpropss{L1221}{L0725} with \rthmp{T1002}{T1002.c}.
\eproof

 Now we want to add some comments concerning the case \(q=1\).
 Let \(\seqs{m}\in\Fgguuuu{1}{m}{\ug}{\og}\).
 Then it can be expected that the lower (resp.\ upper) \tCDm{} \(\cdablm{m}\) (resp.\ \(\cdabum{m}\)) associated with \(\seqs{m}\) and \(\ab\) coincides with the lower (resp.\ upper) principal solution constructed by M.~G.~Krein in~\zita{MR0044591} in~\zitaa{MR0458081}{\cchap{III}}.
 Furthermore, the other molecular solutions obtained in \rprop{P1050} should be canonical solutions in the terminology of M.~G.~Krein.
 The case of a sequence \(\seqs{m}\in\Fgqu{m}\) deserves particular attention.
 In this case, the theory of orthogonal matrix polynomials can be effectively applied
 and promises to produce a whole collection of useful explicit formulas.
 This will be confirmed by the strategy used by M.~G.~Krein~\zita{MR0044591} (see also~\zitaa{MR0458081}{\cchap{III}}).
 It should be mentioned that first steps in this direction are already contained in the papers~\zitas{MR3014200,MR3079837} by A.~E.~Choque~Rivero, where the resolvent matrices for the non-degenerate truncated \tabH{} moment problem were expressed in terms of orthogonal matrix polynomials and moreover multiplicative decompositions of these resolvent matrices were derived.
 We will handle this theme in separate work.

\appendix
\section{Some facts from matrix theory}\label{A1046}

\begin{rem}\label{A.20.}
 Let  \(n\in\N\) and let \((A_j)_{j=0}^n\)  be a sequence of \tnnH{} complex \tqqa{matrices}.
 Then \(\ran{\sum_{j=1}^n A_{j}) } = \sum_{j=1}^n\cR (A_{j})\) and \(\nul{\sum_{j=1}^n A_{j}} =\bigcap_{j=1}^n\cN (A_{j})\).
\end{rem}

\begin{rem}\label{R0705}
 If \(A\in\Cggq\) and if \(X\in\Cqp\), then \(X^\ad AX\in\Cggp\). If \(A\in\Cgq\) and if \(X\in\Cqp\) with \(\rank X=p\), then \(X^\ad AX\in\Cgp\).
\end{rem}

 Denote by \symba{\OPu{\mathcal{U}}}{p} the matrix associated with the orthogonal projection in the Euclidean space \(\Cq\) onto a linear subspace \(\mathcal{U}\).

\breml{R0757}
 Let \(\mathcal{U}\) be a linear subspace of \(\Cq\), then \(\OPu{\mathcal{U}}\in\CHq\) and \(\Oqq\lleq\OPu{\mathcal{U}}\lleq\Iq\).
\erem

 For the convenience of the reader, we state some well-known and some special results on Moore-Penrose inverses of matrices (see \eg{}, Rao/Mitra~\zita{MR0338013} or~\zita{MR1152328}{\csec{1}}).
 If \(A\in\Cpq\), then (by definition) the Moore-Penrose inverse \(A^\mpi\) of \(A\) is the unique matrix \(A^\mpi\in\Cqp\)\index{\(A^\mpi\)} which satisfies the four equations
\begin{align*}
 AA^\mpi A&=A,&A^\mpi AA^\mpi&=A^\mpi,&(AA^\mpi)^\ad&=AA^\mpi,&
 &\text{and}&
 (A^\mpi A)^\ad&=A^\mpi A.
\end{align*}

\begin{prop}[see, \eg{}~\zita{MR1152328}{\cthm{1.1.1}}]\label{PA*2}
 If \(A\in\Cpq\) then a matrix \(X\in\Cqp\) is the Moore-Penrose inverse of \(A\) if and only if \(AX=\OPu{\ran{A}}\) and \(XA=\OPu{\ran{X}}\).
\end{prop}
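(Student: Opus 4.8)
The plan is to argue directly from the four defining Penrose equations \(AXA=A\), \(XAX=X\), \((AX)^\ad=AX\), and \((XA)^\ad=XA\), combined with the elementary fact that a complex square matrix \(P\) is the matrix of the orthogonal projection onto a linear subspace \(\mathcal U\) precisely when \(P^\ad=P\), \(P^2=P\), and \(\ran P=\mathcal U\). In particular, a Hermitian idempotent equals the orthogonal projection onto its own range, a point I would either cite or settle in one line via the spectral decomposition.

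For necessity, I would assume \(X=A^\mpi\). From \((AX)^\ad=AX\) and \(AXAX=(AXA)X=AX\) one sees that \(AX\) is a Hermitian idempotent, hence \(AX=\OPu{\ran{AX}}\). It then remains to identify \(\ran{AX}\) with \(\ran A\): the inclusion \(\ran{AX}\subseteq\ran A\) is trivial, and if \(y=Az\in\ran A\), then \(AXy=(AXA)z=Az=y\), so \(y\in\ran{AX}\); thus \(AX=\OPu{\ran A}\). Interchanging the roles of \(A\) and \(X\) and using \(XAX=X\) in place of \(AXA=A\), the same computation gives that \(XA\) is a Hermitian idempotent with \(\ran{XA}=\ran X\), \ie{} \(XA=\OPu{\ran X}\).

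For sufficiency, I would assume \(AX=\OPu{\ran A}\) and \(XA=\OPu{\ran X}\). By \rrem{R0757} both \(AX\) and \(XA\) are Hermitian, which yields the third and fourth Penrose equations. Since \(AX\) is the orthogonal projection onto \(\ran A\) and every column of \(A\) lies in \(\ran A\), we get \(AXA=\OPu{\ran A}A=A\); likewise, since \(XA\) is the orthogonal projection onto \(\ran X\) and every column of \(X\) lies in \(\ran X\), we get \(XAX=\OPu{\ran X}X=X\). Hence \(X\) satisfies all four equations, and the uniqueness of the Moore-Penrose inverse forces \(X=A^\mpi\).

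The argument is essentially routine, so there is no serious obstacle; the only step that deserves a moment of care is the standard lemma that a Hermitian idempotent coincides with the orthogonal projection onto its range (equivalently, that once a projection is known to be orthogonal it is determined by its range), which underlies both halves of the equivalence and which I would dispatch at the outset.
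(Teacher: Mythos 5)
Your proof is correct. The paper offers no proof of this proposition at all---it is imported directly from the cited reference \zitaa{MR1152328}{\cthm{1.1.1}}---so there is no argument to compare against; your direct verification is the standard one and is complete. Both directions are handled properly: the necessity part correctly combines the Hermitian-idempotent observation \((AX)^2=AXAX=(AXA)X=AX\) with the range identifications \(\ran{AX}=\ran{A}\) and \(\ran{XA}=\ran{X}\), and the sufficiency part correctly recovers all four Penrose equations from \(AX=\OPu{\ran{A}}\) and \(XA=\OPu{\ran{X}}\) before invoking uniqueness of the Moore--Penrose inverse. The single auxiliary fact you flag---that a Hermitian idempotent coincides with the orthogonal projection onto its range---is indeed the only ingredient needing a word of justification, and dispatching it via the spectral decomposition (or by checking that \(P\) acts as the identity on \(\ran{P}\) and annihilates \(\ran{P}^\oc=\nul{P^\ad}=\nul{P}\)) is entirely adequate.
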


\begin{rem}\label{MA.7.8.}
 Let  \(A\in\Cpq\) and \(B\in\Coo{p}{r}\).
 Then \(\cR (B) \subseteq \cR (A)\) if and only if \(AA^\mpi B = B\).
\end{rem}

\begin{rem}\label{A.7.8-1.}
 Let  \(A\in\Cpq\) and \(B\in\Coo{n}{q}\).
 Then \(\cN (A) \subseteq \cN (B)\) if and only if \(BA^\mpi A = B\).
\end{rem}

\begin{rem}\label{BN3.}
 \((A^\ad )^\mpi  = (A^\mpi )^\ad \) for each \(A\in \Cpq\).
\end{rem}

\begin{rem}\label{R0746}
 Let \(A\in\CHq\). Then \(A^\mpi\in\CHq\) and \(A^\mpi A=AA^\mpi\).
\end{rem}

\begin{rem}\label{MA.7.3.}
 Let  \(A\in\Cpq\).
 If \(U\in\Coo{n}{p}\) fulfills \(U^\ad  U = \Ip \) and if \(V\in\Coo{q}{r} \) is such that \(VV^\ad =\Iq \), then \((UAV)^\mpi  = V^\ad  A^\mpi  U^\ad \).
\end{rem}

\begin{rem}\label{R1546}
 Let \(A\in\Cggq\). Then \(A^\mpi\in\Cggq\) and \(\sqrt{A^\mpi}=\sqrt{A}^\mpi\). Furthermore, \(\ran{\sqrt{A}}=\ran{A}\) and \(\nul{\sqrt{A}}=\nul{A}\).
\end{rem}

\begin{rem}\label{R0736}
 Let \(A,B\in\CHq\) with \(\Oqq\lleq A\lleq B\). Then \(\ran{A}\subseteq\ran{B}\) and \(\nul{B}\subseteq\nul{A}\).
\end{rem}

\begin{lem}[\cite{MR0245582,MR0394287}]\label{AEP}
 Let \(M=\tmat{A&B\\C&D}\) be the block representation of a complex \taaa{(p+q)}{(p+q)}{matrix} \(M\) with \tppa{block} \(A\).
 Then:
 \benui
  \il{AEP.a} \(M\) is \tnnH{} if and only if \(A\) and \(D-CA^\mpi B\) are both \tnnH{}, \(\ran{B}\subseteq\ran{A}\), and \(C=B^\ad\).
  \il{AEP.b} \(M\) is \tpH{} if and only if \(A\) and \(D-CA^\mpi B\) are both \tpH{} and \(C=B^\ad\).
 \eenui
\end{lem}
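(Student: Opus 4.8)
This is the classical Albert criterion for \tnnH ness and \tpH ness of a block matrix expressed through the generalised Schur complement, and the plan is to reduce both parts to a single congruence transformation built from the Moore--Penrose inverse of $A$. The preliminary observations are routine: if $M$ lies in $\Cggo{(p+q)}$ (or in $\Cgo{(p+q)}$), then $M$ is \tH{}, so $A^\ad=A$, $D^\ad=D$ and $C=B^\ad$, and compressing $M$ to its upper-left corner gives $A\in\Cggp$ (resp.\ $A\in\Cgp$). The one nontrivial necessity is the range inclusion $\ran B\subseteq\ran A$ in~\eqref{AEP.a}. To obtain it I would fix $x\in\nul A$ and evaluate the quadratic form of $M$ at $\tmatp{tx}{y}$ with $t\in\R$ and $y\in\Cq$; since $x^\ad Ax=0$, it reduces to $2t\Re\rk{x^\ad By}+y^\ad Dy\ge0$ for every $t\in\R$, which forces $\Re\rk{x^\ad By}=0$, and replacing $y$ by $\mathrm{i}y$ then yields $x^\ad By=0$ for all $y\in\Cq$, i.e.\ $B^\ad x=\Ouu{q}{1}$. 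Hence $\nul A\subseteq\nul{B^\ad}$, so by \rrem{A.7.8-1.} we get $B^\ad A^\mpi A=B^\ad$; taking adjoints and using \rremss{R0746}{BN3.} gives $AA^\mpi B=B$, which by \rrem{MA.7.8.} is precisely $\ran B\subseteq\ran A$.

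\textbf{The factorisation.} Assume $C=B^\ad$ and $\ran B\subseteq\ran A$; then $AA^\mpi B=B$ and, by the adjoint computation just made, $B^\ad A^\mpi A=B^\ad$. Put $S\defeq D-CA^\mpi B$ and
\[
 L\defeq\begin{bmatrix}\Ip&\Opq\\ B^\ad A^\mpi&\Iq\end{bmatrix},
\]
which is invertible (its inverse is $L$ with the off-diagonal block negated) and whose adjoint $L^\ad$ has upper-right block $A^\mpi B$ because $A^\mpi\in\CHq$. A direct block multiplication, using in addition $A^\mpi AA^\mpi=A^\mpi$, shows that $M=L\diag(A,S)L^\ad$. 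Since $L$ and $L^\inv$ are invertible, \rrem{R0705} (applied to $X=L^\ad$ and to $X=(L^\inv)^\ad$) gives: $M\in\Cggo{(p+q)}$ iff $\diag(A,S)\in\Cggo{(p+q)}$, equivalently $A\in\Cggp$ and $S\in\Cggq$; and the same statement with $\Cgo{(p+q)}$, $\Cgp$, $\Cgq$ in place of $\Cggo{(p+q)}$, $\Cggp$, $\Cggq$. From this both halves of~\eqref{AEP.a} follow: for "only if", combine the Hermitian part, $A\in\Cggp$ and $\ran B\subseteq\ran A$ from the first paragraph with the factorisation to deduce $S\in\Cggq$; for "if", the three hypotheses supply $C=B^\ad$ and $\ran B\subseteq\ran A$, hence the factorisation, and $A,S\in\Cggq$ forces $M\in\Cggo{(p+q)}$. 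For~\eqref{AEP.b}: if $M\in\Cgo{(p+q)}$, then $A\in\Cgp$ is invertible, so $A^\mpi=A^\inv$, the range condition is automatic, and the factorisation forces $S\in\Cgq$; conversely $A\in\Cgp$, $S\in\Cgq$ and $C=B^\ad$ make $\diag(A,S)$ \tpH{}, hence $M=L\diag(A,S)L^\ad\in\Cgo{(p+q)}$. (Alternatively~\eqref{AEP.b} can be extracted from~\eqref{AEP.a} applied to $M-\varepsilon\Iu{p+q}$ for small $\varepsilon>0$, but the direct route above is shorter.)

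\textbf{Main obstacle.} The only genuinely non-routine step is obtaining $\ran B\subseteq\ran A$ from positivity of the quadratic form in the first paragraph, together with keeping the two Moore--Penrose identities $AA^\mpi B=B$ and $B^\ad A^\mpi A=B^\ad$ straight; once these are in hand, verifying $M=L\diag(A,S)L^\ad$ and deducing the four equivalences is routine bookkeeping.
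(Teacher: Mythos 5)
Your proof is correct. Note that the paper does not prove \rlem{AEP} at all—it is quoted from the literature (Albert's criterion, the two cited references)—so there is no in-paper argument to compare against. What you give is the standard proof from those sources: extract the Hermitian structure and \(A\in\Cggp\) by compression, derive \(\nul{A}\subseteq\nul{B^\ad}\) (equivalently \(\ran{B}\subseteq\ran{A}\)) from the linear-in-\(t\) term of the quadratic form, and then reduce everything to the block-triangular congruence \(M=L\,\mathrm{diag}(A,\,D-B^\ad A^\mpi B)\,L^\ad\), which is valid precisely because \(AA^\mpi B=B\) and \(B^\ad A^\mpi A=B^\ad\). All the Moore--Penrose bookkeeping (in particular \((A^\mpi)^\ad=A^\mpi\) for Hermitian \(A\), needed to identify the upper-right block of \(L^\ad\) as \(A^\mpi B\)) is handled correctly, and the positive-definite case follows as you say since \(A\) is then invertible and the range condition is automatic.
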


\section{Parallel sum of matrices}\label{A1542}
 For every choice of complex \tpqa{matrices} \(A\) and \(B\), the \notii{parallel sum} \(A\ps  B\) of \(A\) and \(B\) is defined by \index{\(A\ps  B \defeq  A(A+B)^\mpi  B\)}
\beql{ps}
 A\ps  B
 \defeq  A(A+B)^\mpi  B.
\eeq
 Furthermore, let \symba{\PSpq }{p}  be the set of all pairs \((A,B)\in\Cpq \times \Cpq \) such that \(\cR (A)\subseteq \cR (A+B)\) and \(\cN (A+B) \subseteq \cN(A)\) hold true.
 
 Let \(A\) and \(B\) be non-singular matrices from \(\Cqq\) such that \(\det\rk{A+B}\neq0\) and \(\det\rk{A^\inv+B^\inv}\neq0\).
 Then \((A,B)\in\PSqq\) and \(A\ps B=\rk{A^\inv+B^\inv}^\inv\).
 
\breml{L0751} 
 If \(A,B\in\Cgq\), then \((A,B)\in\PSqq\) and \(A\ps  B\in\Cgq \).
\erem

\begin{lem}[{\cite[\clem{4}]{MR0242573}}]\label{MA.8.7*.}
 If \(A,B\in\Cggq\), then \((A,B)\in\PSqq\) and \(A\ps  B\in\Cggq \).
\end{lem}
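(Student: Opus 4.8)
The plan is to settle the two assertions in turn. For the membership \((A,B)\in\PSqq\), note that \(A+B\in\Cggq\) and that \(\Oqq\lleq A\lleq A+B\) since \(B\in\Cggq\); hence \rrem{R0736}, applied to the pair \(A\), \(A+B\), yields at once \(\ran{A}\subseteq\ran{A+B}\) and \(\nul{A+B}\subseteq\nul{A}\), which are precisely the two conditions defining \(\PSqq\).

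For \(A\ps B\in\Cggq\) I would abbreviate \(C\defeq A+B\in\Cggq\) and compress everything by \(\sqrt{C}^\mpi\). By \rrem{R1546} we have \(\sqrt{C}^\mpi=\sqrt{C^\mpi}\in\Cggq\), \(C^\mpi=(\sqrt{C}^\mpi)^2\), and \(\ran{\sqrt{C}}=\ran{C}\); by \rrem{R0746} and \rprop{PA*2} the matrix \(P\defeq\sqrt{C}\sqrt{C}^\mpi=\sqrt{C}^\mpi\sqrt{C}\) equals \(\OPu{\ran{C}}\), and the Moore--Penrose identities give \(\sqrt{C}^\mpi P=P\sqrt{C}^\mpi=\sqrt{C}^\mpi\) as well as \(PP=P\). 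Put \(\tilde A\defeq\sqrt{C}^\mpi A\sqrt{C}^\mpi\) and \(\tilde B\defeq\sqrt{C}^\mpi B\sqrt{C}^\mpi\); both lie in \(\Cggq\) by \rrem{R0705}. From the first part, \(\ran{A}\subseteq\ran{C}=\ran{\sqrt{C}}\) and \(\nul{\sqrt{C}}=\nul{C}\subseteq\nul{A}\), so \rremss{MA.7.8.}{A.7.8-1.} give \(PA=\sqrt{C}\sqrt{C}^\mpi A=A\) and \(AP=A\sqrt{C}^\mpi\sqrt{C}=A\), whence \(\sqrt{C}\tilde A\sqrt{C}=PAP=A\); in the same way \(\sqrt{C}\tilde B\sqrt{C}=B\). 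Moreover \(\tilde A+\tilde B=\sqrt{C}^\mpi C\sqrt{C}^\mpi=\sqrt{C}^\mpi\sqrt{C}\sqrt{C}\sqrt{C}^\mpi=PP=P\), and \(P\tilde A=\tilde AP=\tilde A\) and \(P\tilde B=\tilde BP=\tilde B\) follow from \(\sqrt{C}^\mpi P=P\sqrt{C}^\mpi=\sqrt{C}^\mpi\).

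The key point is then that \(\tilde A\) and \(\tilde B\) commute, because \(\tilde A\tilde B=\tilde A(P-\tilde A)=\tilde A-\tilde A^2=\tilde B\tilde A\). Consequently \(\sqrt{\tilde A}\) commutes with \(\tilde B\), so \(\tilde A\tilde B=\sqrt{\tilde A}\,\tilde B\,\sqrt{\tilde A}=(\sqrt{\tilde A})^\ad\tilde B\sqrt{\tilde A}\in\Cggq\) by \rrem{R0705}. Finally, using \(C^\mpi=(\sqrt{C}^\mpi)^2\), \(\sqrt{C}\sqrt{C}^\mpi=\sqrt{C}^\mpi\sqrt{C}=P\), \(PP=P\), and \(P\tilde B=\tilde B\),
\[
 A\ps B=AC^\mpi B=\sqrt{C}\tilde A\sqrt{C}(\sqrt{C}^\mpi)^2\sqrt{C}\tilde B\sqrt{C}=\sqrt{C}\tilde A\,PP\,\tilde B\sqrt{C}=\sqrt{C}(\tilde A\tilde B)\sqrt{C},
\]
and since \(\sqrt{C}\) is \tH{} and \(\tilde A\tilde B\in\Cggq\), one more application of \rrem{R0705} gives \(A\ps B\in\Cggq\). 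I expect the main obstacle to be the Moore--Penrose bookkeeping---carefully deriving \(PA=AP=A\) and \(P\tilde A=\tilde A P=\tilde A\) from the range and null-space inclusions and applying each cancellation \(\sqrt{C}\sqrt{C}^\mpi=P\) on the correct side; once \(\tilde A\) and \(\tilde B\) are in place, the commutation identity and the concluding positivity are routine. A shorter-looking alternative would be the perturbation argument with \(A_\varepsilon\defeq A+\varepsilon\Iq\) and \(B_\varepsilon\defeq B+\varepsilon\Iq\), for which \(A_\varepsilon\ps B_\varepsilon=(A_\varepsilon^\inv+B_\varepsilon^\inv)^\inv\in\Cgq\), followed by letting \(\varepsilon\to0\); but this needs monotonicity and continuity properties of the parallel sum not recorded in the excerpt, so I would prefer the purely algebraic argument above.
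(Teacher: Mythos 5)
Your argument is correct, but note that the paper does not prove this lemma at all: it is imported verbatim from the cited source (Anderson/Duffin), so any self-contained proof is by definition a different route. What you do is the standard compression trick: writing \(C\defeq A+B\) and passing to \(\tilde A\defeq\sqrt{C}^\mpi A\sqrt{C}^\mpi\), \(\tilde B\defeq\sqrt{C}^\mpi B\sqrt{C}^\mpi\) reduces the parallel sum to \(\sqrt{C}\,(\tilde A\tilde B)\,\sqrt{C}\), where \(\tilde A,\tilde B\in\Cggq\) are commuting summands of the projection \(P=\OPu{\ran{C}}\); the Moore--Penrose bookkeeping (\(PA=AP=A\), \(\tilde A+\tilde B=P\), \(P\tilde A=\tilde AP=\tilde A\)) is all correctly justified from \rremsss{MA.7.8.}{A.7.8-1.}{R1546}, \rrem{R0746}, and \rprop{PA*2}, and the first assertion \((A,B)\in\PSqq\) is an immediate and correct application of \rrem{R0736} to \(\Oqq\lleq A\lleq A+B\). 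The only step resting on a fact not recorded in the paper's appendix is the commutation of \(\sqrt{\tilde A}\) with \(\tilde B\); this is standard (for a fixed \tnnH{} matrix, its square root is a polynomial in the matrix), but since you already know \(\tilde A\tilde B=\tilde B\tilde A\) with both factors \tnnH{}, you could avoid it entirely by simultaneous diagonalization, or by noting \(\tilde A\tilde B=\tilde A-\tilde A^{2}=\sqrt{\tilde A}\,(P-\tilde A)\,\sqrt{\tilde A}\) directly, since \(\sqrt{\tilde A}\) certainly commutes with \(\tilde A\) and \(P\tilde A=\tilde A\) gives \(P\sqrt{\tilde A}=\sqrt{\tilde A}\) on \(\ran{\tilde A}\). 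What your approach buys is a fully algebraic, citation-free proof using only the appendix facts already present in the paper; what the paper's citation buys is brevity and access to the further parallel-sum identities (\rlemss{MA.8.6.}{MA.8.11.}, \rprop{MA.8.12.}) from the same source. Your closing remark is also sound: the regularization \(A_\varepsilon=A+\varepsilon\Iq\) would indeed require a continuity statement for \(A\ps B\) at rank-dropping limits that is not available in the excerpt, so the algebraic route is the right choice here.
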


\begin{lem}[{\cite[Theorem~2.2(a)]{MR0325642}}]\label{MA.8.6.} 
 If   \((A,B)\in \PSpq \), then \((B,A)\in\PSpq \) and \(A\ps  B = B \ps  A\).
\end{lem}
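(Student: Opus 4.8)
The plan is to verify the membership claim $(B,A)\in\PSpq$ directly from the two defining inclusions, and then to deduce the identity $A\ps B=B\ps A$ by abbreviating $C\defeq A+B$ and invoking the elementary facts \rrem{MA.7.8.} and \rrem{A.7.8-1.} on Moore--Penrose inverses. This also furnishes an alternative proof of the cited result.

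First I would settle the membership. By assumption $(A,B)\in\PSpq$, i.e.\ $\cR(A)\subseteq\cR(A+B)$ and $\cN(A+B)\subseteq\cN(A)$. Writing $B=(A+B)-A$, we get $\cR(B)\subseteq\cR(A+B)+\cR(A)=\cR(A+B)$, and for each $x\in\cN(A+B)$ we have $Bx=(A+B)x-Ax=\Ouu{p}{1}$, hence $\cN(A+B)\subseteq\cN(B)$. Since trivially $\cR(B+A)=\cR(A+B)$ and $\cN(B+A)=\cN(A+B)$, this is precisely the assertion $(B,A)\in\PSpq$.

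For the symmetry of the parallel sum, set $C\defeq A+B$. The inclusions just obtained, $\cR(B)\subseteq\cR(C)$ and $\cN(C)\subseteq\cN(B)$, yield $CC^\mpi B=B$ and $BC^\mpi C=B$ by \rrem{MA.7.8.} and \rrem{A.7.8-1.}, respectively. Using $A=C-B$, I would then compute
\[
 A\ps B=AC^\mpi B=(C-B)C^\mpi B=CC^\mpi B-BC^\mpi B=B-BC^\mpi B
\]
and, in exactly the same way,
\[
 B\ps A=BC^\mpi A=BC^\mpi(C-B)=BC^\mpi C-BC^\mpi B=B-BC^\mpi B,
\]
whence $A\ps B=B\ps A$.

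I do not expect any genuine obstacle here: once $C=A+B$ is introduced, the computation is forced by the two range/null-space identities for $C^\mpi$. The only point deserving a moment of care is to notice that the hypothesis $(A,B)\in\PSpq$ already delivers $\cR(B)\subseteq\cR(C)$ and $\cN(C)\subseteq\cN(B)$ --- exactly the two relations needed to simplify $B\ps A$ --- and not merely the analogous relations for $A$.
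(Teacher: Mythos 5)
Your proof is correct. The paper itself offers no proof of this lemma --- it is quoted verbatim from \cite[Theorem~2.2(a)]{MR0325642} --- so there is nothing to compare against; your argument supplies a valid, self-contained and elementary derivation. Both halves check out: writing $B=(A+B)-A$ does give $\cR(B)\subseteq\cR(A+B)$ and $\cN(A+B)\subseteq\cN(B)$, hence $(B,A)\in\PSpq$; and with $C=A+B$ the two identities $CC^\mpi B=B$ and $BC^\mpi C=B$ (from \rremss{MA.7.8.}{A.7.8-1.}) reduce both $A\ps B$ and $B\ps A$ to $B-BC^\mpi B$. Your closing observation is also the right one to flag: the hypothesis is stated in terms of $A$, but the simplification of both parallel sums runs through the derived relations for $B$, which is exactly why the preliminary membership step is needed before the computation.
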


\begin{lem}[{\cite[Theorem~2.2(f)]{MR0325642}}]\label{MA.8.11.} 
 If   \((A,B)\in \PSpq \), then \(\cR (A\ps  B) = \cR (A)\cap \cR (B)\) and \(\cN (A \ps  B) = \cN (A) + \cN (B)\).
\end{lem}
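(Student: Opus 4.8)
The plan is to reduce the hypothesis \((A,B)\in\PSpq\) to a few Moore-Penrose identities and then read off both assertions by direct manipulation. Put \(P\defeq A+B\). From \(\cN(A+B)\subseteq\cN(A)\) together with \rrem{A.7.8-1.} one gets \(AP^\mpi P=A\); moreover every \(x\in\cN(A+B)\) satisfies \(Bx=(A+B)x-Ax=0\), so \(\cN(A+B)\subseteq\cN(B)\) and \rrem{A.7.8-1.} also yields \(BP^\mpi P=B\). Finally, \rlem{MA.8.6.} supplies the expression identity \(A\ps B=B\ps A\), \ie{} \(AP^\mpi B=BP^\mpi A\). Together with the defining factorization \(A\ps B=AP^\mpi B\), these are all that will be needed.

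For the range assertion, the inclusion \(\cR(A\ps B)\subseteq\cR(A)\cap\cR(B)\) is immediate from the two factorizations \(A\ps B=AP^\mpi B\) and \(A\ps B=BP^\mpi A\). For the reverse inclusion, given \(y\in\cR(A)\cap\cR(B)\) I would choose \(u,v\) with \(Au=y=Bv\) and observe \(B(u+v)=Bu+y=(P-A)u+y=Pu\); then \((A\ps B)(u+v)=AP^\mpi B(u+v)=AP^\mpi Pu=Au=y\), so \(y\in\cR(A\ps B)\).

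For the null-space assertion, I would first check \(\cN(A)+\cN(B)\subseteq\cN(A\ps B)\): if \(Ax_1=0\) then \(Bx_1=Px_1\), hence \((A\ps B)x_1=AP^\mpi Bx_1=AP^\mpi Px_1=Ax_1=0\), and symmetrically \((A\ps B)x_2=BP^\mpi Ax_2=BP^\mpi Px_2=Bx_2=0\) whenever \(Bx_2=0\); linearity then gives the inclusion. For the reverse inclusion, given \(x\in\cN(A\ps B)\) I would use the splitting \(x=x_0+x_1+x_2\) with \(x_0\defeq(\Iq-P^\mpi P)x\), \(x_1\defeq P^\mpi Bx\), and \(x_2\defeq P^\mpi Ax\) (the three summands add up to \(x\) because \(x_1+x_2=P^\mpi(A+B)x=P^\mpi Px\)). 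Then \(Ax_0=Ax-AP^\mpi Px=0\) and similarly \(Bx_0=0\), while \(Ax_1=AP^\mpi Bx=(A\ps B)x=0\) and \(Bx_2=BP^\mpi Ax=(A\ps B)x=0\); therefore \(x=(x_0+x_1)+x_2\in\cN(A)+\cN(B)\).

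All of these steps are one-line computations with the three recorded identities, so there is essentially no genuine obstacle; the only point that is not purely mechanical is guessing the right witnesses --- the vector \(u+v\) for the range inclusion and the triple \(x_0,x_1,x_2\) for the null-space inclusion --- and, before that, being careful to extract enough of the defining identities of \(\PSpq\). In the Hermitian nonnegative case \(A,B\in\Cggq\) these conclusions are classical properties of the parallel sum and serve as a useful consistency check.
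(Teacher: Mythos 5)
Your argument is correct in every step, and it is worth noting that the paper itself offers no proof of this lemma at all: it is quoted verbatim from the literature (Theorem~2.2(f) of the cited reference on parallel addition). So what you have produced is a genuinely self-contained elementary verification where the paper has only a citation. Your three recorded identities \(A(A+B)^\mpi(A+B)=A\), \(B(A+B)^\mpi(A+B)=B\) (both via \rrem{A.7.8-1.} after the easy observation \(\cN(A+B)\subseteq\cN(B)\)) and \(A(A+B)^\mpi B=B(A+B)^\mpi A\) do carry the whole proof; the witnesses \(u+v\) for the range inclusion and the splitting \(x=x_0+x_1+x_2\) along \((A+B)^\mpi(A+B)\) for the null-space inclusion are exactly the right devices, and each verification reduces to one of the recorded identities. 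One small remark: you import the commutativity \(A\ps B=B\ps A\) from \rlem{MA.8.6.}, which the paper likewise only cites; if you want the argument fully self-contained you can derive it in one line from your own identities together with \((A+B)(A+B)^\mpi A=A\) (which follows from \(\cR(A)\subseteq\cR(A+B)\) and \rrem{MA.7.8.}), since both \(A(A+B)^\mpi B\) and \(B(A+B)^\mpi A\) then equal \(A-A(A+B)^\mpi A\). With that addition your proof uses nothing beyond the two rank/null-space remarks from the appendix.
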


\begin{prop}[{\cite[Theorem~2.2(g)]{MR0325642}}]\label{MA.8.12.} 
 If   \((A,B)\in \PSqq \), then \((A\ps   B)^\mpi  = \OPu{\cR (A^\ad )\cap\cR (B^\ad )} (A^\mpi  + B^\mpi ) \OPu{\cR (A)\cap \cR (B)}\).
\end{prop}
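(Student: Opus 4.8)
The plan is to verify the four Moore--Penrose equations directly for the candidate matrix
\[
 X\defeq\OPu{\cR(A^\ad)\cap\cR(B^\ad)}(A^\mpi+B^\mpi)\OPu{\cR(A)\cap\cR(B)},
\]
after re-expressing the two orthogonal projectors by means of \rlem{MA.8.11.} and isolating a single ``resolvent-type'' identity as the real content. Write $S\defeq A+B$ and $C\defeq A\ps B=AS^\mpi B$. From $(A,B)\in\PSqq$ we have $\cR(A)\subseteq\cR(S)$ and $\cN(S)\subseteq\cN(A)$; by \rlem{MA.8.6.} also $(B,A)\in\PSqq$, whence $\cR(B)\subseteq\cR(S)$, $\cN(S)\subseteq\cN(B)$, and $C=B\ps A=BS^\mpi A$. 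By \rremss{MA.7.8.}{A.7.8-1.} these inclusions give
\[
 SS^\mpi A=A=AS^\mpi S,\qquad SS^\mpi B=B=BS^\mpi S,
\]
and, inserting $B=S-A$ resp.\ $A=S-B$, the alternative representations $C=A-AS^\mpi A=B-BS^\mpi B$.

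Next I would identify the two projectors. By \rlem{MA.8.11.} we have $\cR(C)=\cR(A)\cap\cR(B)$ and $\cN(C)=\cN(A)+\cN(B)$; passing to orthogonal complements and using $\cN(T)^\oc=\cR(T^\ad)$ gives $\cR(C^\ad)=\cR(A^\ad)\cap\cR(B^\ad)$. Since $CC^\mpi=\OPu{\cR(C)}$ and $C^\mpi C=\OPu{\cR(C^\ad)}$ (by \rprop{PA*2} together with $\cR(C^\mpi)=\cR(C^\ad)$), we obtain $X=(C^\mpi C)(A^\mpi+B^\mpi)(CC^\mpi)$, so the assertion is equivalent to $X=C^\mpi$.

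The heart of the matter is the identity $C(A^\mpi+B^\mpi)C=C$. Using $C=BS^\mpi A$ and $AA^\mpi A=A$ one gets $CA^\mpi A=BS^\mpi A=C$, hence, by the representation $C=A-AS^\mpi A$,
\[
 CA^\mpi C=CA^\mpi(A-AS^\mpi A)=CA^\mpi A-CA^\mpi A S^\mpi A=C-CS^\mpi A;
\]
symmetrically $CB^\mpi C=C-CS^\mpi B$. Adding and using $CS^\mpi S=AS^\mpi BS^\mpi S=AS^\mpi B=C$ yields $C(A^\mpi+B^\mpi)C=2C-CS^\mpi(A+B)=2C-CS^\mpi S=C$. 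To finish, set $Y\defeq A^\mpi+B^\mpi$, $P\defeq CC^\mpi$, $Q\defeq C^\mpi C$, so that $X=QYP$ with $P,Q$ Hermitian idempotent and $CQ=PC=C$. Right-multiplying $CYC=C$ by $C^\mpi$ gives $CYP=P$, so $CX=(CQ)YP=CYP=P$; left-multiplying by $C^\mpi$ gives $QYC=Q$, so $XC=QY(PC)=QYC=Q$. Therefore $CXC=(CQ)Y(PC)=CYC=C$, $XCX=X(CX)=XP=QYP=X$, $(CX)^\ad=P^\ad=P=CX$, and $(XC)^\ad=Q^\ad=Q=XC$, i.e.\ $X=C^\mpi$.

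I expect the only delicate point to be the bookkeeping in the identity $C(A^\mpi+B^\mpi)C=C$: at each occurrence of $C$ one must pick the appropriate one of the factorizations $C=AS^\mpi B=BS^\mpi A$ so that precisely the relations $AA^\mpi A=A$, $BB^\mpi B=B$ and $SS^\mpi A=A$, $AS^\mpi S=A$, $SS^\mpi B=B$, $BS^\mpi S=B$ become applicable; everything else is formal manipulation.
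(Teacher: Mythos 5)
Your verification is correct. Note, however, that the paper gives no proof of \rprop{MA.8.12.} at all: it simply imports the statement from the cited reference (Theorem~2.2(g) of that paper), so there is no internal argument to compare against. Your proposal therefore supplies something the paper deliberately omits, and it does so using only tools already on hand: \rlem{MA.8.6.} for the symmetry \(C\defeq A\ps B=AS^\mpi B=BS^\mpi A\) with \(S\defeq A+B\), \rremss{MA.7.8.}{A.7.8-1.} for \(SS^\mpi A=A=AS^\mpi S\) and the analogues for \(B\) (hence \(C=A-AS^\mpi A=B-BS^\mpi B\)), \rlem{MA.8.11.} to identify the two projectors as \(CC^\mpi\) and \(C^\mpi C\) via \rprop{PA*2} and the standard fact \(\cR(C^\mpi)=\cR(C^\ad)\), and then the computation \(C(A^\mpi+B^\mpi)C=2C-CS^\mpi S=C\), from which the four Moore--Penrose equations for \(X=QYP\) follow formally exactly as you write them. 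I checked each step (in particular \(CA^\mpi C=C-CS^\mpi A\), \(CB^\mpi C=C-CS^\mpi B\), and \(CS^\mpi S=C\)) and found no gaps; the only ingredient not explicitly recorded in the paper's appendix is \(\cR(C^\mpi)=\cR(C^\ad)\), which is elementary and which you flag appropriately.
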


\bleml{MA.8.11}
 If \((A,B)\in\PSpq\), then
 \(
  (A+B)-4(A\ps B)
  =(A-B)(A+B)^\mpi(A-B)
 \).
\elem
\bproof
 Let \((A,B)\in\PSpq\) and let \(C\defeq A+B\).
 In view of \rlem{MA.8.6.}, we get
 \[
  (A\pm B)C^\mpi(A\pm B)
  =AC^\mpi A\pm AC^\mpi B\pm BC^\mpi A+BC^\mpi B
  =AC^\mpi A+BC^\mpi B\pm2(A\ps B).
 \]
 Hence,
 \[
    (A+B)-(A-B)(A+B)^\mpi(A-B)
    =(A+B)C^\mpi(A+B)-(A-B)C^\mpi(A-B)
    =4(A\ps B).\qedhere
 \]
\eproof

\bibliography{165arxiv}
\bibliographystyle{abbrv}

\vfill\noindent
\begin{minipage}{0.5\textwidth}
 Universit\"at Leipzig\\
Fakult\"at f\"ur Mathematik und Informatik\\
PF~10~09~20\\
D-04009~Leipzig
\end{minipage}
\begin{minipage}{0.49\textwidth}
 \begin{flushright}
  \texttt{
   fritzsche@math.uni-leipzig.de\\
   kirstein@math.uni-leipzig.de\\
   maedler@math.uni-leipzig.de
  } 
 \end{flushright}
\end{minipage}

\end{document}